\algrenewcommand\algorithmicrequire{\textbf{Input:}}
\algrenewcommand\algorithmicensure{\textbf{Output:}}
\newcommand{\Input}{\Require}
\newcommand{\Output}{\Ensure}
\newcommand{\R}{\mathbb{R}}
\newcommand{\ds}{\displaystyle}
\newcommand{\x}{{\bf x}}
\newcommand{\curl}{\nabla \times }
\newtheorem{Theorem}{Theorem}[section]
\newtheorem{Lemma}{Lemma}[section]
\newtheorem{Proposition}{Proposition}[section]
\newtheorem{remark}{Remark}[section]
\newtheorem*{Assumption*}{Assumption}
\newtheorem{problem}{Problem}[section]
\newtheorem*{problem*}{Problem}
\numberwithin{equation}{section}
\begin{document}

\title{Inverse initial data reconstruction for Maxwell's equations via time-dimensional reduction method}

\author{
Thuy T. Le
\thanks{
Department of Mathematics, NC State University, Raleigh, NC 27695, USA, \texttt{tle9@ncsu.edu}, corresponding author}
\and
Cong B. Van\thanks{Department of Mathematics and Statistics, University of North Carolina at
Charlotte, Charlotte, NC, 28223, USA, \texttt{cvan1@charlotte.edu}}
\and
Trong D. Dang\thanks{Faculty of Mathematics and Computer Science, University of Science, Vietnam National University, Ho Chi Minh City, Vietnam, \texttt{ddtrong@hcmus.edu.vn}} 
\and
Loc H. Nguyen\thanks{Department of Mathematics and Statistics, University of North Carolina at
Charlotte, Charlotte, NC, 28223, USA, \texttt{loc.nguyen@charlotte.edu}}  
}


\date{}
\maketitle

\begin{abstract}
We study an inverse problem for the time-dependent Maxwell system in an inhomogeneous and anisotropic medium. The objective is to recover the initial electric field $\mathbf{E}_0$ in a bounded domain $\Omega \subset \mathbb{R}^3$, using boundary measurements of the electric field and its normal derivative over a finite time interval. Informed by practical constraints, we adopt an under-determined formulation of Maxwell's equations that avoids the need for initial magnetic field data and charge density information.
To address this inverse problem, we develop a time-dimension reduction approach by projecting the electric field onto a finite-dimensional Legendre polynomial-exponential basis in time. This reformulates the original space-time problem into a sequence of spatial systems for the projection coefficients. The reconstruction is carried out using the quasi-reversibility method within a minimum-norm framework, which accommodates the inherent non-uniqueness of the under-determined setting.
We prove a convergence theorem that ensures the quasi-reversibility solution approximates the true solution as the noise and regularization parameters vanish. Numerical experiments in a fully three-dimensional setting validate the method's performance. The reconstructed initial electric field remains accurate even with $10\%$ noise in the data, demonstrating the robustness and applicability of the proposed approach to realistic inverse electromagnetic problems.
\end{abstract}

\noindent{\it Key words: 
} Time-domain Maxwell equations, inverse problem, initial condition recovery, time-dimension reduction, Legendre polynomial-exponential basis, minimum-norm solution,  convergence analysis

\noindent{\it AMS subject classification: 	
}  35R30, 35L50, 35Q61, 65M32, 78A25

\section{Introduction}

The primary objective of this paper is to address an inverse source problem involving the recovery of the initial electric field from lateral boundary measurements. Notably, our framework is economical in the sense that it does not require internal information such as the initial magnetic field or the internal charge distribution. While this relaxation introduces mathematical challenges due to the under-determined nature of the problem, it significantly reduces the burden of data acquisition and measurement in practical applications.

Let ${\bf E}$ be the electric field, governed by the well-determined Maxwell's equations
\begin{equation}
\begin{cases}
\nabla \times \left( \mu^{-1}(\mathbf{x}) \nabla \times \mathbf{E}(\mathbf{x}, t) \right) + \varepsilon(\mathbf{x}) \dfrac{\partial^2 \mathbf{E}(\mathbf{x}, t)}{\partial t^2} = \mathbf{0}, & (\mathbf{x}, t) \in \mathbb{R}^3 \times (0, \infty),
\\
\nabla \cdot (\varepsilon(\x) {\bf E}(\x, t)) = \rho(\x) &(\x, t) \in \R^3 \times (0, \infty)
\\
\mathbf{E}_t(\mathbf{x}, 0) = \varepsilon^{-1}(\x) \curl
\mathbf{H}_0(\mathbf{x}) & \mathbf{x} \in \mathbb{R}^3,
\\
\mathbf{E}(\mathbf{x}, 0) = \mathbf{E}_0(\mathbf{x}) & \mathbf{x} \in \mathbb{R}^3.
\end{cases}
\label{Maxwell}
\end{equation}
where $\mathbf{E}_0$ and $\mathbf{H}_0$ are the initial electric and magnetic fields, respectively. The $3 \times 3$ matrix-valued functions $\varepsilon(\mathbf{x})$ and $\mu(\mathbf{x})$ respectively represent the electric permittivity and magnetic permeability of the medium, and are assumed to be smooth and strictly positive definite. The scalar function $\rho(\mathbf{x})$ denotes the static charge density.
We solve the inverse problem formulated below.

\begin{problem}[Inverse initial data problem]
Assume that $\mu$ and $\varepsilon$ are known while $\rho$ and ${\bf H}_0$ are unknown.
Let ${\bf E}$ be the solution to \eqref{Maxwell}. Given the boundary data,
\begin{equation}
\mathbf{F}(\mathbf{x}, t) = \mathbf{E}(\mathbf{x}, t), \quad
\mathbf{G}(\mathbf{x}, t) = \partial_{\nu} \mathbf{E}(\mathbf{x}, t), \quad (\mathbf{x}, t) \in \partial \Omega \times (0, T),
\label{data}
\end{equation}
 reconstruct the initial electric field $\mathbf{E}_0(\mathbf{x}) = \mathbf{E}(\mathbf{x}, 0)$ within the domain $\Omega$.
\label{isp}
\end{problem}

 In general, the uniqueness of the inverse problem under consideration, namely, recovering the initial electric field $\mathbf{E}_0$ from lateral boundary data, is not guaranteed due to the under-determined nature of the formulation. This under-determined issue arises because we do not assume knowledge of the charge density $\rho$ or the initial magnetic field ${\bf H}_0$, which are standard ingredients in the classical Maxwell system. However, in the idealized case where both $\rho$ and ${\bf H}_0$ are known, and the medium is assumed to be homogeneous and isotropic with constant material parameters $\mu = \mu_0$ and $\varepsilon = \varepsilon_0$, the Maxwell system can be reformulated as a system of second-order hyperbolic equations for each component of $\mathbf{E}$.
 The principal operator for this system is $\varepsilon_0\partial_{tt} - \mu_0 \Delta.$
  In such settings, uniqueness and stability results are well-established, leveraging tools such as Carleman estimates for hyperbolic operators (see, e.g., \cite{BeilinaKlibanovBook, ClasonKlibanov:sjsc2007}).
  In contrast, this work considers the more realistic and challenging scenario in which the internal data $\rho$ and ${\bf H}_0$ are not available. To address the resulting non-uniqueness, we employ a minimum-norm formulation that selects, among all admissible solutions, the one minimizing the norm in a suitable Hilbert space, thereby compensating for the absence of $\rho$. To overcome the lack of information on ${\bf H}_0$, we apply a time-domain reduction technique. This combined strategy yields a well-posed formulation of the inverse problem, representing a key theoretical contribution of our work.

Inverse problems for Maxwell's equations have been extensively investigated in various settings, including source identification, material parameter reconstruction, and imaging applications such as magnetoencephalography and non-destructive testing. A broad overview of inverse problems in the Maxwell framework is provided in~\cite{romanov1994inverse}. Classical formulations in the frequency (time-harmonic) domain typically focus on reconstructing current sources using boundary or far-field measurements. Owing to the inherent ill-posedness of these problems, multi-frequency techniques have been widely employed to improve both stability and resolution.
Despite notable advances, comprehensive stability results for inverse source problems in the full Maxwell system under general conditions remain limited. A significant contribution in this direction is found in~\cite{bao2020stability}, where increasing stability was demonstrated for both elastic and electromagnetic wave equations with multi-frequency data. The analysis established that higher-frequency source components become negligible, thereby enhancing the quality of reconstructions.
In \cite{HarrisLeNguyen2024}, a fast and robust numerical method has been developed to reconstruct point-like or small-volume sources in the time-harmonic Maxwell equations from Cauchy data at a fixed frequency. This approach leverages imaging functions and asymptotic expansions, offering stable recovery of source locations and moments without relying on Carleman estimates.
In the context of anisotropic or inhomogeneous media,~\cite{li2005anisotropic} and~\cite{isakov2021} obtained uniqueness and increasing stability results through the use of Carleman estimates and analytic continuation techniques in the frequency domain. These studies underscore the importance of combining broadband data with structural assumptions on the medium to mitigate the effects of instability. Additionally,~\cite{albanese2006inverse} investigated non-uniqueness in determining volume current densities from boundary observations, showing that uniqueness can be restored under structural assumptions, such as dipole or surface-supported sources.
On the computational side, several reconstruction strategies have been developed. For example,~\cite{griesmaier2018windowed} introduced a qualitative imaging approach based on the windowed Fourier transform for time-harmonic Maxwell equations, enabling effective identification of radiating source supports. Similarly,~\cite{ammari2002inverse} addressed the recovery of dipole sources in magnetoencephalography using a low-frequency asymptotic expansion, establishing uniqueness, stability, and a practical inversion method.
Time-domain inverse source problems have also attracted growing interest. In~\cite{benoit2015time}, a numerical optimization framework was developed to identify time-dependent current sources, motivated by applications in fault detection and antenna design. A convergent algorithm for reconstructing temporal source functions from normal electric field measurements was proposed in~\cite{slodicka2016recovery}, while~\cite{kang2018reconstruction} introduced a potential-based formulation and proved both uniqueness and convergence for the associated reconstruction scheme.
Despite these advances, the problem of recovering the initial condition $\mathbf{E}_0(\mathbf{x})$ from finite-time boundary measurements, distinct from recovering source terms or material parameters, has received relatively little attention. This problem is particularly challenging due to the vectorial and coupled nature of Maxwell's system, which complicates inversion procedures.
In this work, we address this gap by studying the recovery of the initial electric field $\mathbf{E}_0(\mathbf{x})$ using boundary measurements of $\mathbf{E}(\mathbf{x}, t)$ and its normal derivative on $\partial \Omega \times (0, T)$. This setting is motivated by practical scenarios in which internal field data are inaccessible and only surface measurements can be obtained. 
Our objective is to design an effective numerical reconstruction method based on the theoretical structure of the problem and informed by the broader literature on inverse wave and electromagnetic problems.

For completeness, we list here some important works for the inverse initial data problem for hyperbolic equations.
The problem of recovering initial conditions in hyperbolic wave models has been extensively studied. In particular, when wave propagation occurs in free space, a number of explicit reconstruction formulas have been developed; see, for example, \cite{DoKunyansky:ip2018, Haltmeier:cma2013, Natterer:ipi2012, Linh:ipi2009}. Other commonly used approaches include the time reversal method \cite{Hristova:ip2009, HristovaKuchmentLinh:ip2006, KatsnelsonNguyen:aml2018, Stefanov:ip2009, Stefanov:ip2011}, the quasi-reversibility method \cite{ClasonKlibanov:sjsc2007, LeNguyenNguyenPowell:JOSC2021}, and a variety of iterative reconstruction techniques \cite{Huangetal:IEEE2013, Paltaufetal:ip2007, Paltaufetal:osa2002}.
These contributions primarily focus on simplified wave propagation models assuming isotropic and non-dissipative media. More advanced models incorporating damping or attenuation effects have also been analyzed in the literature; see \cite{Acosta:jde2018, Ammarietal:cm2011, Ammarielal:sp2012, Burgholzer:pspie2007, Haltmeier:jmiv2019, Homan:ipi2013, Kowar:SISI2014, Kowar:sp2012, Nachman1990}.

Our approach introduces a time-dimension reduction framework that transforms the original space-time inverse problem into a sequence of coupled spatial problems. This is achieved by projecting the time-dependent electric field onto a Legendre polynomial-exponential basis first developed in \cite{TrongElastic}, which ensures rapid decay of higher-order temporal modes and facilitates numerical stability. Each spatial mode satisfies a curl-curl-type equation involving the projection of the second time derivative of the true field. These equations are solved using a quasi-reversibility method formulated in a minimum-norm setting, which enables stable recovery under an underdetermined formulation. The method does not require knowledge of the initial magnetic field or the charge density and is designed to accommodate noisy boundary measurements. We prove convergence of the regularized solutions to the true electric field under appropriate parameter regimes and noise levels. Numerical results in fully three-dimensional configurations demonstrate the method's robustness and accuracy, even in the presence of substantial noise in the input data.

The remainder of the paper is organized as follows. In Section \ref{sec:legendre_basis}, we present the forward model for Maxwell's equations under an under-determined formulation and describe the inverse problem setup. Section \ref{sec_min} introduces the minimum-norm framework used to address the non-uniqueness arising from the under-determined setting. In Section \ref{sec4}, we develop the time-dimension reduction method based on a Legendre polynomial-exponential basis and derive the corresponding spatial systems. Section \ref{quasi} provides the convergence analysis of the regularized solution in the presence of noisy boundary data. Section \ref{sec6} presents numerical experiments that demonstrate the accuracy and stability of the proposed approach. Finally, Section \ref{sec7} concludes the paper with a summary and discussion of possible extensions.

\section{Preliminaries on the Legendre polynomial-exponential basis}
\label{sec:legendre_basis}

In this section, we review the construction and essential properties of the Legendre polynomial exponential basis, originally introduced in~\cite{TrongElastic}. This basis plays a central role in the time-dimensional reduction framework due to its favorable approximation and spectral properties.

Let $P_n(x)$ denote the classical Legendre polynomial of degree $n \geq 0$ on the interval $(-1, 1)$, defined by Rodrigues' formula:
\[
P_n(x) = \frac{1}{2^n n!} \frac{d^n}{dx^n}(x^2 - 1)^n.
\]
To define an analogous family on the interval $(0, T)$, we apply the change of variable $x = \frac{2t}{T} - 1$, yielding the rescaled orthonormal system:
\[
Q_n(t) = \sqrt{\frac{2n + 1}{T}} \, P_n\left( \frac{2t}{T} - 1 \right), \quad t \in (0, T),
\]
which forms an orthonormal basis of $L^2(0, T)$. Multiplying this by an exponential weight, we obtain the Legendre polynomial-exponential basis:
\[
\Psi_n(t) := e^t Q_n(t), \quad t \in (0, T).
\]
The collection $\{\Psi_n\}_{n \geq 0}$ constitutes an orthonormal basis of the square-integrable space with the weight $e^{-2t}$
\[
L^2_{e^{-2t}}(0, T) := \left\{ u \in L^2(0, T) \ \middle| \ \int_0^T e^{-2t} |u(t)|^2 \, dt < \infty \right\},
\]
equipped with the inner product
\[
\langle u, v \rangle_{L^2_{e^{-2t}}(0, T)} := \int_0^T e^{-2t} u(t) v(t) \, dt.
\]

Let $H$ be a Hilbert space. Throughout this paper, we denote by
\[
L^2_{e^{-2t}}((0, T); H) := \left\{ u : (0, T) \to H \ \middle| \ \int_0^T e^{-2t} \|u(t)\|_H^2 \, dt < \infty \right\}
\]
the space of $H$-valued functions that are square-integrable over $(0, T)$ with respect to the exponential weight $e^{-2t}$.

In particular, when $H$ is $H^p(\Omega)^3$ with $p \geq 0$,
we view the space  $L^2_{e^{-2t}}((0, T); H)$ as
\[
L^2_{e^{-2t}}((0, T); H) = \left\{ u : \Omega \times (0, T) \to \mathbb{R} \ \middle| \ \int_0^T e^{-2t} \|u(\cdot, t)\|_{H}^2 \, dt < \infty \right\},
\]
where the norm $\|u(\cdot, t)\|_H$ is interpreted with respect to the spatial variable for each fixed time $t$.
Also in the case when $H$ is $H^p(\Omega)^3$ with $p \geq 0$, we introduce several useful operators for functions $u \in L^2_{e^{-2t}((0, T); H)}$:

\begin{enumerate}
    \item Let $u(\mathbf{x}, t) \in L^2_{e^{-2t}}((0, T); H)$. The vector of its $(N+1)$ first Fourier coefficients with respect to the orthonormal basis $\{\Psi_n\}_{n=0}^N$ is defined as
    \begin{equation*}
        \mathbb{F}^N[u](\mathbf{x}) := 
        \begin{bmatrix}
            u_0(\mathbf{x}) & u_1(\mathbf{x}) & \dots & u_N(\mathbf{x})
        \end{bmatrix}^\top,
        \quad \x \in \Omega,
    \end{equation*}
    where each coefficient is given by
    \begin{equation}
    u_n(\mathbf{x}) := \int_0^T e^{-2t} u(\mathbf{x}, t) \Psi_n(t) \, dt, \quad n \geq 0,  \x \in \Omega.
    \label{un}
    \end{equation}
    
    \item Given a coefficient vector $\mathbf{U}(\mathbf{x}) := \begin{bmatrix}
    u_0(\mathbf{x}) & u_1(\mathbf{x}) & \dots & u_N(\mathbf{x})
    \end{bmatrix}^\top \in H^{N + 1}$, its associated space-time expansion is defined as
    \begin{equation}
        \mathbb{S}^N[\mathbf{U}](\mathbf{x}, t) := \sum_{n = 0}^N u_n(\mathbf{x}) \Psi_n(t), \quad (\x, t) \in \Omega \times (0, T).
        \label{SN1}
    \end{equation}
    
    \item Let $u(\mathbf{x}, t) \in L^2_{e^{-2t}}((0, T); H)$. Its projection onto the finite-dimensional subspace
    \[
    \mathbb{V}^N := \mathrm{span} \{\Psi_0, \Psi_1, \dots, \Psi_N\}
    \]
    is given by the truncated expansion
    \begin{equation}
        \mathbb{P}^N[u](\mathbf{x}, t) := \sum_{n = 0}^N u_n(\mathbf{x}) \Psi_n(t),
        \label{SN}
    \end{equation}
    where the coefficients $\{u_n(\mathbf{x})\}_{n = 0}^N$ are as defined in item 1.
\end{enumerate}

\begin{remark}
Let $p \geq 0$ and $H = H^p(\Omega)^3$.
For any $N \geq 0$, the composition $\mathbb{S}^N \circ \mathbb{F}^N$ coincides with the orthogonal projection $\mathbb{P}^N$ onto the subspace $\mathbb{V}^N$. That is,
\begin{equation*}
\mathbb{S}^N[\mathbb{F}^N[u]] = \mathbb{P}^N[u], \quad \text{for all } u(\mathbf{x}, t) \in L^2_{e^{-2t}}((0, T); H).
\end{equation*}

Furthermore, Parseval's identity implies the following norm relation:
\begin{equation}
\|\mathbb{P}^N[u]\|_{L^2_{e^{-2t}}((0, T); H)} = \|\mathbb{S}^N[\mathbf{U}]\|_{L^2_{e^{-2t}}((0, T); H)} = \|\mathbf{U}\|_{H^{(N+1)}} \leq \|u\|_{L^2_{e^{-2t}}((0, T); H)},
\label{2.4}
\end{equation}
where $\mathbf{U} = \mathbb{F}^N[u]$ denotes the vector of Fourier coefficients.
\end{remark}

%
%

The following proposition guarantees that the second time derivative of a sufficiently smooth function can be recovered from the second derivatives of its Fourier expansion in the weighted space.

\begin{Proposition}
Let $u \in H^k((0, T); L^2(\Omega))$ for some $k \geq 5$. Then the expansion
\begin{equation*}
\sum_{n=0}^\infty \langle u(\cdot, \cdot), \Psi_n \rangle_{L^2_{e^{-2t}}(0, T)} \Psi_n''(t)
\longrightarrow \frac{\partial^2 u}{\partial t^2}
\end{equation*}
 in $L^2_{e^{-2t}}((0, T); L^2(\Omega))$. 
\label{Prop_Trong}
\end{Proposition}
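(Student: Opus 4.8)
The plan is to remove the exponential weight and thereby reduce the statement to the $H^2$-convergence of the classical Legendre projection. Put $g(\x,t):=e^{-t}u(\x,t)$; since $e^{-t}$ and all its derivatives are bounded on $[0,T]$, the hypothesis $u\in H^k((0,T);L^2(\Omega))$ yields $g\in H^k((0,T);L^2(\Omega))$ with comparable norm. Because $\Psi_n=e^t Q_n$ and the inner product carries the weight $e^{-2t}$, the coefficients in \eqref{un} collapse to ordinary (rescaled) Legendre coefficients,
\[
u_n(\x)=\int_0^T e^{-2t}u(\x,t)\,\Psi_n(t)\,dt=\int_0^T e^{-t}u(\x,t)\,Q_n(t)\,dt=\langle g(\x,\cdot),Q_n\rangle_{L^2(0,T)},
\]
so that $\{u_n(\x)\}_{n=0}^N$ are precisely the coefficients of $g(\x,\cdot)$ in the orthonormal system $\{Q_n\}$ of $L^2(0,T)$. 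Let $\Pi_N$ denote the $L^2(0,T)$-orthogonal projection onto the polynomials of degree at most $N$, so that $\Pi_N g=\sum_{n=0}^N u_n Q_n$. Since each partial sum is a finite linear combination of the $\Psi_n$, differentiation commutes with truncation, giving the identity
\[
\sum_{n=0}^N u_n(\x)\,\Psi_n''(t)=\partial_{tt}\,\mathbb{P}^N[u](\x,t)=\partial_{tt}\big(e^t\,\Pi_N g\big)(\x,t).
\]

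I would then exploit the exact cancellation between $e^t$ and the weight $e^{-2t}$. With $h:=\Pi_N g-g$, the elementary identity $\partial_{tt}(e^t h)=e^t(h''+2h'+h)$, and $\partial_{tt}u=\partial_{tt}(e^t g)$, a direct computation gives
\[
\Big\|\sum_{n=0}^N u_n\Psi_n''-\partial_{tt}u\Big\|_{L^2_{e^{-2t}}((0,T);L^2(\Omega))}^2=\int_\Omega\!\int_0^T\big|h''+2h'+h\big|^2\,dt\,d\x\le C\,\|\Pi_N g-g\|_{H^2((0,T);L^2(\Omega))}^2,
\]
where the factor $e^{2t}$ produced by $|e^t(\cdots)|^2$ is annihilated by the weight $e^{-2t}$. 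Hence the proposition is equivalent to the assertion that the Legendre projection converges in the time-$H^2$ norm, namely $\|\Pi_N g-g\|_{H^2((0,T);L^2(\Omega))}\to0$.

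The last step is the crux: I would invoke the spectral approximation estimate for the $L^2$-orthogonal Legendre projection, which in the scalar case reads $\|g-\Pi_N g\|_{H^\mu(-1,1)}\le C N^{2\mu-k-1/2}\|g\|_{H^k(-1,1)}$ for $0\le\mu\le k$. Choosing $\mu=2$ gives the rate $N^{7/2-k}$, which tends to zero under the hypothesis $k\ge5$. Because $\Pi_N$ acts only on the temporal variable, the $L^2(\Omega)$-valued version follows by expanding $g(\x,\cdot)$ along an orthonormal basis of $L^2(\Omega)$, applying the scalar estimate mode by mode, and summing (equivalently, by squaring and using Fubini); the change of variables from $(-1,1)$ to $(0,T)$ alters only the constant, not the rate. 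Combined with the reduction above, this yields the claimed convergence in $L^2_{e^{-2t}}((0,T);L^2(\Omega))$.

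The principal obstacle is precisely this higher-order projection estimate. In contrast to the plain $L^2$ bound, the $H^2$ estimate loses two powers of $N$ (the factor $N^{2\mu}$ with $\mu=2$), so that the decay is governed by the margin $k-7/2$; this is exactly why a high smoothness threshold such as $k\ge5$ must be imposed. The technical care lies in justifying the vector-valued form of this inverse-type estimate and in confirming that multiplication by the smooth factor $e^t$ and the interval rescaling preserve the convergence \emph{rate}, not merely its qualitative character.
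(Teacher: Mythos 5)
Your proof is correct. Note that the paper itself contains no argument for Proposition \ref{Prop_Trong}: it defers entirely to \cite{TrongElastic}, so there is no in-paper proof to compare step by step; what you have produced is a self-contained alternative. Your route is the natural one given the weighted structure: writing $u = e^t g$, observing that the weighted coefficients $u_n$ of $u$ are exactly the (rescaled) Legendre coefficients of $g$, and cancelling $e^t$ against the weight $e^{-2t}$ reduces the claim to the $H^2((0,T);L^2(\Omega))$-convergence of the unweighted Legendre projection $\Pi_N g \to g$. That final step is indeed the classical Canuto--Quarteroni spectral estimate; one small precision is that the exponent $2\mu - \tfrac12 - k$ you quote is the branch valid for $1 \le \mu \le k$ (for $0 \le \mu \le 1$ it is $\tfrac{3\mu}{2} - k$), but since you only invoke $\mu = 2$ this does not affect anything. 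Your vector-valued extension is also sound: $\Pi_N$ acts only in the time variable, so expanding $g(\cdot,t)$ in an orthonormal basis of $L^2(\Omega)$, applying the scalar bound mode by mode, and summing squares via Parseval and Fubini gives the $L^2(\Omega)$-valued estimate with the same rate. Two minor remarks: first, your argument actually yields convergence at the explicit rate $N^{7/2-k}$, which is stronger than the qualitative statement of the proposition; second, that rate already vanishes for $k \ge 4$, so the hypothesis $k \ge 5$ is comfortably sufficient for your argument rather than sharp for it --- the threshold in the proposition presumably reflects the needs of the proof in \cite{TrongElastic} rather than an obstruction intrinsic to your reduction.
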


The following proposition guarantees that the derivatives of the basis functions $\Psi_n$, $n \geq 0$, are nontrivial. This non-vanishing property is essential for maintaining the accuracy of the time-dimensional reduction method.

\begin{Proposition}
For every $n \geq 0$, the function $\Psi_n(t) = e^t Q_n(t)$ is infinitely differentiable on the interval $(0, T)$, and none of its derivatives of any order vanishes identically on $(0, T)$.
\label{prop2.1}
\end{Proposition}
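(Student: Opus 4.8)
The plan is to reduce the non-vanishing claim to an elementary statement about polynomials, after disposing of smoothness immediately. First I would note that $Q_n$ is a polynomial of degree exactly $n$: the Legendre polynomial $P_n$ has degree $n$ with nonzero leading coefficient, and the affine change of variable $t \mapsto 2t/T - 1$ preserves the degree. Since $e^t$ is entire, it follows at once that $\Psi_n = e^t Q_n \in C^\infty(0, T)$, which settles the first assertion.

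For the derivatives, I would write $D = d/dt$ and establish, for an arbitrary polynomial $g$, the operator identity
\[
\frac{d^m}{dt^m}\bigl(e^t g(t)\bigr) = e^t \,(I + D)^m g(t), \qquad m \geq 0,
\]
which I would prove by induction on $m$, starting from $(e^t g)' = e^t(g + g') = e^t (I + D) g$ and using that $e^t$ is its own derivative. Applying this with $g = Q_n$ yields $\Psi_n^{(m)}(t) = e^t\,(I + D)^m Q_n(t)$, so the task reduces to showing that $(I + D)^m Q_n$ is not the zero polynomial.

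The key step is that $I + D$ does not lower the degree. If $g$ has degree $d$ with leading coefficient $a \neq 0$, then $g'$ has degree $d - 1$, so $(I + D) g = g + g'$ again has degree $d$ with the same leading coefficient $a$. Iterating this, $(I + D)^m Q_n$ has degree exactly $n$ and the same nonzero leading coefficient as $Q_n$; hence it is a nonzero polynomial and has only finitely many roots. Since $e^t > 0$ everywhere, $\Psi_n^{(m)} = e^t (I + D)^m Q_n$ vanishes at only finitely many points of $(0, T)$, and in particular cannot vanish identically there.

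Because the argument is constructive and elementary, I do not anticipate a genuine obstacle. The only point that requires care, and the crux that makes everything work, is the invariance of the leading coefficient under $I + D$: this is precisely what prevents repeated differentiation from eventually annihilating the polynomial factor, as would happen for $D$ alone once the order exceeds $n$.
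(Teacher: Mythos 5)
Your proof is correct. Note that the paper itself does not prove Proposition~\ref{prop2.1}: it simply defers to the reference \cite{TrongElastic}, so there is no in-text argument to compare against. Your self-contained proof is sound at every step: the identity
\[
\frac{d^m}{dt^m}\bigl(e^t g(t)\bigr) = e^t\,(I+D)^m g(t)
\]
follows by the induction you describe, and the observation that $I+D$ preserves both the degree and the leading coefficient of a polynomial (since differentiation strictly lowers degree) is exactly the right structural fact. The edge case $\deg g = 0$ is harmless, since then $g' = 0$ and $(I+D)g = g$. In fact your argument yields a quantitatively stronger conclusion than the proposition asks for: each derivative $\Psi_n^{(m)}$ equals $e^t$ times a polynomial of degree exactly $n$, and hence has at most $n$ zeros in $(0,T)$, rather than merely failing to vanish identically. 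This strengthening is in the same spirit as what the proposition is used for in the paper (the non-degeneracy of the coefficients $s_{mn}$ in the reduced system), so your elementary route is a legitimate, and arguably more transparent, substitute for the cited proof.
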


For details of the proofs of Propositions \ref{Prop_Trong} and \ref{prop2.1}, see~\cite{TrongElastic}.
We emphasize that the Legendre polynomial-exponential basis offers significant structural advantages over previously studied polynomial-exponential systems (e.g.,~\cite{Klibanov:jiip2017}). Its orthogonality, regularity, and spectral decay properties make it especially well-suited for the analysis and numerical implementation of time-reduced inverse methods.
For a complete account of theoretical results and proofs concerning approximation properties and convergence behavior, we refer the reader to~\cite{TrongElastic}.

The following lemma plays an essential role in establishing our convergence result. Its proof can be found in \cite[Lemma 4.2]{TrongElastic}.

\begin{Lemma}
Let $T > 0$. Then there exists a constant $C > 0$ such that
\[
\|v'\|^2_{L^2_{e^{-2t}}(0, T)} \leq C \left( \|v\|^2_{L^2_{e^{-2t}}(0, T)} + \|v''\|^2_{L^2_{e^{-2t}}(0, T)} \right)
\]
for all $v \in H^2(0, T)$.
\label{lem4.2}
\end{Lemma}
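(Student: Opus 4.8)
The plan is to remove the exponential weight first and then prove a standard interpolation inequality by integration by parts, with the endpoint boundary terms being the only delicate point. Since $t \mapsto e^{-2t}$ is continuous and strictly positive on the compact interval $[0,T]$, we have $e^{-2T} \le e^{-2t} \le 1$ for all $t \in (0,T)$, so for every $w \in L^2(0,T)$,
\[
e^{-2T}\,\|w\|_{L^2(0,T)}^2 \le \|w\|_{L^2_{e^{-2t}}(0, T)}^2 \le \|w\|_{L^2(0,T)}^2 .
\]
Applying this to $w \in \{v, v', v''\}$ reduces the claim to the unweighted interpolation inequality $\|v'\|_{L^2(0,T)}^2 \le C'\big(\|v\|_{L^2(0,T)}^2 + \|v''\|_{L^2(0,T)}^2\big)$ for all $v \in H^2(0,T)$; the weighted statement then follows with $C = e^{2T}C'$.

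For the unweighted inequality I would integrate by parts. Since $v \in H^2(0,T)$,
\[
\|v'\|_{L^2(0,T)}^2 = \int_0^T (v'(t))^2\, dt = \big[\, v(t)\,v'(t)\,\big]_0^T - \int_0^T v(t)\,v''(t)\, dt .
\]
The interior integral is immediately controlled by Cauchy--Schwarz and Young's inequality, giving $\big|\int_0^T v\,v''\big| \le \tfrac12\big(\|v\|_{L^2}^2 + \|v''\|_{L^2}^2\big)$. The main obstacle is the boundary term $[vv']_0^T = v(T)v'(T) - v(0)v'(0)$, which involves pointwise endpoint values of both $v$ and $v'$ that are not directly dominated by the three $L^2$ norms.

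To control the boundary term I would establish an elementary endpoint (trace) estimate: starting from $\phi(t)^2 = \phi(a)^2 + 2\int_a^t \phi\,\phi'\,ds$ for $a \in \{0,T\}$ and integrating in $t$ over $(0,T)$ yields
\[
\phi(a)^2 \le \frac1T\,\|\phi\|_{L^2(0,T)}^2 + 2\,\|\phi\|_{L^2(0,T)}\,\|\phi'\|_{L^2(0,T)} .
\]
Applying this with $\phi = v$ bounds $v(0)^2$ and $v(T)^2$ in terms of $\|v\|_{L^2}$ and $\|v'\|_{L^2}$, and with $\phi = v'$ bounds $v'(0)^2$ and $v'(T)^2$ in terms of $\|v'\|_{L^2}$ and $\|v''\|_{L^2}$. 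Estimating $|v(a)v'(a)| \le \tfrac12\big(v(a)^2 + v'(a)^2\big)$ and inserting these bounds produces, after a further application of Young's inequality, terms of the form $\eta\,\|v'\|_{L^2}^2$ whose coefficient $\eta$ can be made arbitrarily small (at the cost of enlarging the coefficients of $\|v\|_{L^2}^2$ and $\|v''\|_{L^2}^2$). Choosing $\eta$ small enough to absorb these into the left-hand side $\|v'\|_{L^2}^2$ closes the estimate. The absorption step is where care is needed, since $\|v'\|_{L^2}$ unavoidably reappears on the right through the endpoint bounds; an equivalent and slightly slicker route would be to invoke Ehrling's lemma for the chain of embeddings $H^2(0,T) \hookrightarrow\hookrightarrow H^1(0,T) \hookrightarrow L^2(0,T)$, the first being compact, and then absorb the resulting $\varepsilon\|v'\|_{L^2}$ term identically.
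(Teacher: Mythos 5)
The paper itself contains no proof of this lemma to compare against: it is imported wholesale from Lemma 4.2 of \cite{TrongElastic}, with only the citation given. Judged on its own merits, your proposal is essentially correct, and one of your two routes closes the argument completely. The reduction to the unweighted inequality via $e^{-2T} \le e^{-2t} \le 1$ is exactly right, and the Ehrling route is sound: since $H^2(0,T) \hookrightarrow\hookrightarrow H^1(0,T) \hookrightarrow L^2(0,T)$ with the first embedding compact (Rellich on a bounded interval) and the second continuous and injective, for every $\varepsilon>0$ there is $C_\varepsilon$ with $\|v\|_{H^1} \le \varepsilon \|v\|_{H^2} + C_\varepsilon \|v\|_{L^2}$, and choosing $\varepsilon$ small enough to absorb $\varepsilon\|v'\|_{L^2}$ into the left-hand side gives the unweighted inequality, hence the weighted one.

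One caveat on the integration-by-parts route as you literally arranged it. After the symmetric Young bound $|v(a)v'(a)| \le \tfrac12\bigl(v(a)^2 + v'(a)^2\bigr)$, your trace estimate applied with $\phi = v'$ contributes $\tfrac{1}{2T}\|v'\|_{L^2}^2$ at each endpoint, i.e.\ $\tfrac1T\|v'\|_{L^2}^2$ in total. This coefficient is fixed by $T$: no ``further application of Young's inequality'' can shrink it, because it is already a pure multiple of $\|v'\|_{L^2}^2$ rather than a product of two different norms. Absorption into the left-hand side therefore only works when $T>1$; for $T \le 1$ the argument as written stalls, so your claim that all $\|v'\|^2$-coefficients on the right can be made arbitrarily small is not accurate for this term. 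The fix is to introduce the tunable parameter at the boundary product \emph{before} invoking the trace bound on $v'$: estimate $|v(a)v'(a)| \le \tfrac{1}{2\lambda} v(a)^2 + \tfrac{\lambda}{2} v'(a)^2$ with $\lambda$ small, so that the trace bound then yields $\tfrac{\lambda}{T}\|v'\|_{L^2}^2 + 2\lambda \|v'\|_{L^2}\|v''\|_{L^2}$, and every occurrence of $\|v'\|_{L^2}^2$ on the right carries a coefficient that genuinely can be made less than, say, $\tfrac12$ in total. With that reordering (or by simply defaulting to your Ehrling argument) the proof is complete.
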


\section{The uniqueness of the minimum-norm solution }\label{sec_min}

Problem~\ref{isp} can be reformulated as the task of determining a vector-valued function $\mathbf{E}$ that satisfies the following boundary value problem:
\begin{equation}
\begin{cases}
    \nabla \times \left( \mu^{-1}(\mathbf{x}) \nabla \times \mathbf{E}(\mathbf{x}, t) \right) + \varepsilon(\mathbf{x}) \dfrac{\partial^2 \mathbf{E}(\mathbf{x}, t)}{\partial t^2} = \mathbf{0}, & (\mathbf{x}, t) \in \Omega_T, \\
    \mathbf{E}(\mathbf{x}, t) = \mathbf{F}^*(\mathbf{x}, t), & (\mathbf{x}, t) \in \Gamma_T, \\
    \partial_{\nu} \mathbf{E}(\mathbf{x}, t) = \mathbf{G}^*(\mathbf{x}, t), & (\mathbf{x}, t) \in \Gamma_T,
\end{cases}
\label{3_1}
\end{equation}
where $\mathbf{F}^*$ and $\mathbf{G}^*$ denote the exact (noise-free) boundary data defined in~\eqref{data} and $\Gamma_T = \partial\Omega \times (0,T)$. Once~\eqref{3_1} is solved, the initial condition can be recovered by setting $\mathbf{E}_0 = \mathbf{E}(\cdot, 0)$.

The existence of a solution to~\eqref{3_1} is guaranteed, as $\mathbf{E}$ can be taken as the solution to the corresponding forward problem.
However, the solution to system~\eqref{3_1} is generally not unique due to the absence of initial conditions and the lack of information about the divergence of the field. In the full Maxwell framework, the vector field $\mathbf{E}$ is typically governed not only by the curl-curl dynamics but also by an accompanying condition on the divergence of $\varepsilon\mathbf{E}$, which constrains the admissible solution space. However, in problem~\eqref{3_1}, we do not assume prior knowledge of $\nabla\cdot (\varepsilon\mathbf {E})$ anywhere in the domain. This lack of divergence information might lead to multiple fields satisfying the same boundary data and evolution equation, thereby giving rise to nonuniqueness. To resolve this ambiguity and ensure well-posedness, we compute a solution within a minimum-norm framework, selecting the field with the lowest Sobolev norm among all those satisfying the given system and boundary conditions. This approach yields a unique, stable, and physically meaningful solution, even without full information about the system.

\begin{Theorem}
Let the admissible set of solutions be defined by
\[
\mathcal{S} := \left\{ \mathbf{E} \in L^2_{e^{-2t}}((0, T); H^3(\Omega)^3) \cap H^2((0, T); L^2(\Omega)^3) \ \middle| \ \mathbf{E} \text{ satisfies } \eqref{3_1} \right\}.
\]
Suppose that $\mu$, $\epsilon$, $\mathbf{F}^*$, and $\mathbf{G}^*$ are sufficiently smooth. Then, problem~\eqref{3_1} admits a unique solution $\mathbf{E}^* \in L^2((0, T); H^3(\Omega)^3) \cap H^2((0, T); L^2(\Omega)^3)$ satisfying
\[
\| \mathbf{E}^* \|_{L^2_{e^{-2t}}((0, T); H^3(\Omega)^3)} = \min \left\{ \| \mathbf{E} \|_{L^2_{e^{-2t}}((0, T); H^3(\Omega)^3)} : \mathbf{E} \in \mathcal{S} \right\}.
\]
\label{thm31}
\end{Theorem}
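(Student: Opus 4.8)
The plan is to recognize $\mathcal{S}$ as a nonempty, closed, convex (in fact affine) subset of the Hilbert space $\mathcal{H} := L^2_{e^{-2t}}((0,T); H^3(\Omega)^3)$, and then to invoke the Hilbert projection theorem: the unique minimizer of $\|\mathbf{E}\|_{\mathcal{H}}$ over $\mathcal{S}$ is precisely the $\mathcal{H}$-orthogonal projection of the origin onto $\mathcal{S}$, whose existence and uniqueness follow once $\mathcal{S}$ is shown to be nonempty, convex, and closed in $\mathcal{H}$.

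Nonemptiness is immediate: the forward solution of \eqref{Maxwell}, restricted to $\Omega_T$, belongs to $\mathcal{S}$ under the stated smoothness hypotheses. Convexity (indeed affineness) follows from linearity: if $\mathbf{E}_1, \mathbf{E}_2 \in \mathcal{S}$ then their difference $\mathbf{w} := \mathbf{E}_1 - \mathbf{E}_2$ solves the homogeneous system --- the curl-curl equation with right-hand side $\mathbf{0}$, together with the homogeneous boundary data $\mathbf{w} = \mathbf{0}$ and $\partial_\nu \mathbf{w} = \mathbf{0}$ on $\Gamma_T$. Hence $\mathcal{S} = \mathbf{E}^{(0)} + \mathcal{S}_0$, where $\mathbf{E}^{(0)}$ is any fixed element of $\mathcal{S}$ and $\mathcal{S}_0$ is the linear subspace of homogeneous solutions; any affine set is convex.

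The crux is closedness of $\mathcal{S}$ in the $\mathcal{H}$-topology, which is delicate because the defining constraints involve $\partial_{tt}\mathbf{E}$, a quantity not controlled by the $\mathcal{H}$-norm alone. The key is an a priori estimate valid on $\mathcal{S}$ (and on $\mathcal{S}_0$): the curl-curl equation lets us solve for the second time derivative,
\begin{equation*}
\partial_{tt}\mathbf{E}(\cdot,t) = -\varepsilon^{-1}\,\nabla \times \big(\mu^{-1}\,\nabla \times \mathbf{E}(\cdot,t)\big),
\end{equation*}
and since $\mu^{-1}, \varepsilon^{-1}$ are smooth and the curl-curl operator loses two spatial derivatives, for a.e.\ $t$ we obtain $\|\partial_{tt}\mathbf{E}(\cdot,t)\|_{L^2(\Omega)^3} \le C\|\mathbf{E}(\cdot,t)\|_{H^3(\Omega)^3}$. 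Integrating against the weight $e^{-2t}$ gives $\|\partial_{tt}\mathbf{E}\|_{L^2_{e^{-2t}}((0,T);L^2(\Omega)^3)} \le C\|\mathbf{E}\|_{\mathcal{H}}$; combined with Lemma \ref{lem4.2} (applied in the $L^2(\Omega)^3$-valued sense) to control $\partial_t\mathbf{E}$, and with the embedding $H^3(\Omega)^3 \hookrightarrow L^2(\Omega)^3$, this yields the norm domination
\begin{equation*}
\|\mathbf{E}\|_{H^2((0,T);L^2(\Omega)^3)} \le C\,\|\mathbf{E}\|_{\mathcal{H}} \qquad \text{for all } \mathbf{E}\in\mathcal{S}.
\end{equation*}
Consequently, if $\{\mathbf{E}_j\}\subset\mathcal{S}$ is $\mathcal{H}$-Cauchy, applying this estimate to the differences $\mathbf{E}_j - \mathbf{E}_k \in \mathcal{S}_0$ shows $\{\mathbf{E}_j\}$ is also Cauchy in $H^2((0,T);L^2(\Omega)^3)$, hence convergent in the full intersection space to its $\mathcal{H}$-limit $\mathbf{E}$. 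The trace maps $\mathbf{E}\mapsto \mathbf{E}|_{\Gamma_T}$ and $\mathbf{E}\mapsto \partial_\nu\mathbf{E}|_{\Gamma_T}$ are continuous from $\mathcal{H}$ into the appropriate boundary spaces, and the curl-curl term converges in $L^2_{e^{-2t}}((0,T);H^1(\Omega)^3)$ while $\partial_{tt}\mathbf{E}_j \to \partial_{tt}\mathbf{E}$; passing to the limit in \eqref{3_1} shows $\mathbf{E}\in\mathcal{S}$, which establishes closedness.

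With $\mathcal{S}$ nonempty, convex, and closed in the Hilbert space $\mathcal{H}$, the projection theorem furnishes a unique $\mathbf{E}^*\in\mathcal{S}$ minimizing $\|\cdot\|_{\mathcal{H}}$; uniqueness can equivalently be read off the strict convexity of the Hilbert norm via the parallelogram law. Since $\mathbf{E}^*\in\mathcal{S}$, it automatically lies in the intersection space asserted in the statement (the weighted and unweighted time-integrability being equivalent on the bounded interval $(0,T)$). The main obstacle, and the only step requiring genuine PDE input rather than abstract Hilbert-space theory, is the a priori estimate establishing closedness; I also expect the verification that $\partial_\nu\mathbf{E} = \mathbf{G}^*$ passes to the limit to require care, since it presupposes enough spatial regularity for $\partial_\nu\mathbf{E}|_{\Gamma_T}$ to be a well-defined continuous functional --- a point for which the $H^3$-in-space setting of $\mathcal{H}$ is precisely what is needed.
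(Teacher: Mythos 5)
Your proposal is correct, but it packages the functional analysis differently from the paper. The paper runs the direct method of the calculus of variations: it takes a minimizing sequence $\{\mathbf{E}^n\}\subset\mathcal{S}$, uses the equation to rewrite $\partial_{tt}\mathbf{E}^n = -\varepsilon^{-1}\nabla\times(\mu^{-1}\nabla\times\mathbf{E}^n)$ and thereby bound it in $L^2((0,T);H^1(\Omega)^3)$, extracts weakly convergent subsequences, asserts that the weak limit lies in $\mathcal{S}$, and concludes minimality by weak lower semicontinuity; uniqueness is then handled by a separate convexity-plus-parallelogram-identity argument. You instead verify that $\mathcal{S}$ is a nonempty, affine, and norm-closed subset of $\mathcal{H}=L^2_{e^{-2t}}((0,T);H^3(\Omega)^3)$ --- closedness resting on the a priori estimate $\|\mathbf{E}\|_{H^2((0,T);L^2(\Omega)^3)}\le C\|\mathbf{E}\|_{\mathcal{H}}$ for elements of the homogeneous solution space $\mathcal{S}_0$ --- and then invoke the Hilbert projection theorem, which delivers existence and uniqueness in one stroke. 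The essential PDE input is identical in both proofs, namely solving the curl-curl equation for $\partial_{tt}\mathbf{E}$ so that spatial $H^3$ control yields temporal $H^2$ control (with Lemma \ref{lem4.2} supplying the first derivative); what differs is where this input is deployed: the paper uses it to bound second time derivatives along the minimizing sequence, you use it to prove closedness of the constraint set. Your route has the merit of making explicit the step the paper compresses into ``Hence $\mathbf{E}^*\in\mathcal{S}$'' --- strong closedness plus convexity is precisely what guarantees that the limit remains in $\mathcal{S}$ --- and of correctly flagging the trace-continuity point needed to pass the boundary conditions to the limit; the paper's direct method, by contrast, avoids citing the projection theorem but in effect re-proves it (weak compactness for existence, parallelogram law for uniqueness).
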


\begin{remark}
The choice of the stronger Hilbert space $L^2((0, T); H^3(\Omega)^3)$ in Theorem~\ref{thm31} may appear unnecessarily restrictive compared to the more natural space $L^2((0, T); H^2(\Omega)^3)$. However, this choice is made to ensure consistency with the convergence analysis presented in Section~\ref{quasi} for the quasi-reversibility method, where the regularization term involves the $H^3$-norm.
\end{remark}

\begin{proof}[Proof of Theorem \ref{thm31}]

Since $\mu$, $\varepsilon$, ${\bf F}^*$, and ${\bf G}^*$ are sufficiently smooth, the forward problem admits a smooth solution that lies in the admissible set $\mathcal{S}$. Hence, $\mathcal{S}$ is nonempty.
Define the infimum
\[
\alpha := \inf \left\{ \| \mathbf{E} \|_{L^2_{e^{-2t}}((0, T); H^3(\Omega)^3)} : \mathbf{E} \in \mathcal{S} \right\}.
\]
Then for each $n \in \mathbb{N}$, there exists $\mathbf{E}^n \in \mathcal{S}$ such that
\begin{equation}
\alpha \leq \| \mathbf{E}^n \|_{L^2_{e^{-2t}}((0, T); H^3(\Omega)^3)} < \alpha + \frac{1}{n}.
\label{to-min}
\end{equation}
Since the sequence $\{ \mathbf{E}^n \}_{n \in \mathbb{N}}$ is bounded in $L^2_{e^{-2t}}((0, T); H^3(\Omega)^3)$, due to \eqref{3_1}, \[\partial^2_{tt}\mathbf{E}^n = -\varepsilon^{-1}(\x) \curl (\mu^{-1}(\x) \curl \mathbf{E}^n )\]  is bounded in $L^2((0, T); H^1(\Omega)^3)$.
By compact embedding theorems,
there exists a subsequence (still denoted by $\mathbf{E}^n$) such that
\begin{align*}
	\mathbf{E}^n &\rightharpoonup \mathbf{E}^* \text{ weakly in } L^2_{e^{-2t}}(0,T; (H^{3}(\Omega))^3),\\
	\partial_{tt}\mathbf{E}^n &\rightharpoonup \partial_{tt}\mathbf{E}^* \text{ weakly in } L^2(0,T; (H^1(\Omega))^3).
\end{align*}
Hence, ${\bf E}^* \in \mathcal{S}.$
By weak lower semicontinuity and \eqref{to-min}, it follows that
\[
\|\mathbf{E}\|_{L^2_{e^{-2t}}(0,T; (H^{3}(\Omega))^3)} = \alpha.
\]
To establish uniqueness, suppose there exists another minimizer $\mathbf{E}^1 \in \mathcal{S}$ such that
\[
\| \mathbf{E}^1 \|_{L^2_{e^{-2t}}((0, T); H^3(\Omega)^3)}= \alpha.
\]
The convexity of $\mathcal{S}$ implies $(\mathbf{E}^* + \mathbf{E}^1)/2 \in \mathcal{S}$. Therefore,
\begin{equation}
\left\| \frac{\mathbf{E}^* + \mathbf{E}^1}{2} \right\|_{L^2_{e^{-2t}}((0, T); H^3(\Omega)^3)} \geq \alpha.
\label{3_2}
\end{equation}
Applying the parallelogram identity and using \eqref{3_2} yields
\begin{align*}
\| \mathbf{E}^* - &\mathbf{E}^1 \|_{L^2_{e^{-2t}}((0, T); H^3(\Omega)^3)}^2  
\\
&= 2\| \mathbf{E}^* \|_{L^2_{e^{-2t}}((0, T); H^3(\Omega)^3)}^2 
 + 2\| \mathbf{E}^1 \|_{L^2_{e^{-2t}}((0, T); H^3(\Omega)^3)}^2
\\
&
 - 4\left\| \frac{\mathbf{E}^* + \mathbf{E}^1}{2} \right\|_{L^2_{e^{-2t}}((0, T); H^3(\Omega)^3)}^2  \\
&\leq 2\alpha + 2\alpha - 4\alpha = 0,
\end{align*}
which implies that $\mathbf{E}^* = \mathbf{E}^1$. This completes the proof.
\end{proof}

Throughout the remainder of the paper, $C$ denotes a generic constant that may change from one occurrence to another. Its value depends only on the domain $\Omega$, the final time $T$, and the norm $\|{\bf E}^*\|_{L^2_{e^{-2t}}((0, T); H^3(\Omega)^3)}$.

\begin{remark}
In the absence of uniqueness for the system~\eqref{3_1}, the minimum-norm framework offers a principled approach for selecting a unique solution from the potentially infinite solution set. By minimizing a suitable norm over the admissible space $\mathcal{S}$, we obtain the solution with the lowest regularity cost in the chosen Sobolev space. 
Mathematically, the well-posedness of the minimum-norm problem follows from the strict convexity and weak lower semicontinuity of the objective functional in Hilbert spaces, as well as the closed convexity of the constraint set. Physically, the minimum-norm solution may be interpreted as the least energetic or most regular field compatible with the given boundary data, aligning with principles of parsimony and stability.

\end{remark}

\begin{remark}
Aside from the minimum-norm framework, another way to resolve the non-uniqueness is to enrich the inverse problem with additional information. In particular, if the material parameters $\mu$ and $\varepsilon$ are scalar-valued and sufficiently regular, and if both the charge density $\rho$ and the initial magnetic field $\mathbf{H}_0$ are known, then the inverse problem becomes well-posed. Under these assumptions, reconstructing the initial electric field $\mathbf{E}_0$ reduces to finding a function $\mathbf{E}^*$ that satisfies the following system:
\begin{equation}
\begin{cases}
\nabla \times \left( \mu^{-1}(\mathbf{x}) \nabla \times \mathbf{E}(\mathbf{x}, t) \right) + \varepsilon(\mathbf{x}) \dfrac{\partial^2 \mathbf{E}(\mathbf{x}, t)}{\partial t^2} = \mathbf{0}, & (\mathbf{x}, t) \in \Omega_T, \\[0.5em]
\nabla \cdot \left( \varepsilon(\mathbf{x}) \mathbf{E}(\mathbf{x}, t) \right) = \rho(\mathbf{x}), & (\mathbf{x}, t) \in \Omega_T, \\[0.5em]
\partial_t \mathbf{E}(\mathbf{x}, 0) = \varepsilon^{-1}(\mathbf{x}) \nabla \times \mathbf{H}_0(\mathbf{x}), & \mathbf{x} \in \Omega, \\[0.5em]
\mathbf{E}(\mathbf{x}, t) = \mathbf{F}^*(\mathbf{x}, t), & (\mathbf{x}, t) \in \Gamma_T, \\[0.5em]
\partial_{\nu} \mathbf{E}(\mathbf{x}, t) = \mathbf{G}^*(\mathbf{x}, t), & (\mathbf{x}, t) \in \Gamma_T.
\end{cases}
\label{3_3}
\end{equation}
Again, the existence of ${\bf E}^*$ is obvious since ${\bf F}^*$ and ${\bf G}^*$ represent the perfect measured data. The uniqueness of \eqref{3_3} is based on a Carleman estimate.
In fact,
assume that system~\eqref{3_3} admits two solutions, denoted by $\mathbf{E}_1$ and $\mathbf{E}_2$. Define their difference $\mathbf{W} = \mathbf{E}_1 - \mathbf{E}_2$. Then $\mathbf{W}$ satisfies the following homogeneous problem:
\begin{equation*}
\begin{cases}
\nabla \times \left( \mu^{-1}(\mathbf{x}) \nabla \times \mathbf{W}(\mathbf{x}, t) \right) + \varepsilon(\mathbf{x}) \dfrac{\partial^2 \mathbf{W}(\mathbf{x}, t)}{\partial t^2} = \mathbf{0}, & (\mathbf{x}, t) \in \Omega_T, \\[0.5em]
\nabla \cdot \left( \varepsilon(\mathbf{x}) \mathbf{W}(\mathbf{x}, t) \right) = 0, & (\mathbf{x}, t) \in \Omega_T, \\[0.5em]
\partial_t \mathbf{W}(\mathbf{x}, 0) = \mathbf{0}, & \mathbf{x} \in \Omega, \\[0.5em]
\mathbf{W}(\mathbf{x}, t) = \mathbf{0}, & (\mathbf{x}, t) \in \Gamma_T, \\[0.5em]
\partial_{\nu} \mathbf{W}(\mathbf{x}, t) = \mathbf{0}, & (\mathbf{x}, t) \in \Gamma_T.
\end{cases}
\end{equation*}

Applying the vector calculus identity for the curl of a product, we compute
\begin{align*}
\nabla \times \left( \mu^{-1}(\mathbf{x}) \nabla \times \mathbf{W} \right) 
&= \mu^{-1}(\mathbf{x}) \nabla \times \nabla \times \mathbf{W} 
+ \nabla \mu^{-1}(\mathbf{x}) \times (\nabla \times \mathbf{W}) \\
&= \mu^{-1}(\mathbf{x}) \left[ -\Delta \mathbf{W} + \nabla (\nabla \cdot \mathbf{W}) \right]
+ \nabla \mu^{-1}(\mathbf{x}) \times (\nabla \times \mathbf{W}),
\end{align*}
for all $(\mathbf{x}, t) \in \Omega_T$. Substituting this into the Maxwell equation yields
\begin{equation}
\mu^{-1}(\mathbf{x}) \left[ -\Delta \mathbf{W} + \nabla (\nabla \cdot \mathbf{W}) \right]
+ \varepsilon(\mathbf{x}) \dfrac{\partial^2 \mathbf{W}}{\partial t^2} = \mathbf{0}, 
\quad (\mathbf{x}, t) \in \Omega_T.
\label{eq:laplace-like}
\end{equation}

Meanwhile, applying the product rule to the divergence condition gives
\[
\nabla \cdot (\varepsilon(\mathbf{x}) \mathbf{W}) = \varepsilon(\mathbf{x}) \nabla \cdot \mathbf{W} 
+ \nabla \varepsilon(\mathbf{x}) \cdot \mathbf{W} = 0,
\]
from which it follows that
\begin{equation}
\nabla \cdot \mathbf{W}(\mathbf{x}, t) 
= -\varepsilon^{-1}(\mathbf{x}) \nabla \varepsilon(\mathbf{x}) \cdot \mathbf{W}(\mathbf{x}, t).
\label{eq:divW}
\end{equation}

Substituting \eqref{eq:divW} into \eqref{eq:laplace-like}, we obtain the identity
\begin{equation*}
 -\Delta \mathbf{W} 
- \nabla \left( \varepsilon^{-1}(\mathbf{x}) \nabla \varepsilon(\mathbf{x}) \cdot \mathbf{W} \right) 
+ c(\x) \dfrac{\partial^2 \mathbf{W}}{\partial t^2} = \mathbf{0}, 
\quad (\mathbf{x}, t) \in \Omega_T.
\end{equation*}
where $c(\x) = \mu(\x) \varepsilon(\x).$
Hence, the governing equation for $\mathbf{W}$ is a hyperbolic equation whose principal part is given by the operator $c(\mathbf{x}) \, \partial_{tt} - \Delta$. The uniqueness and stability of the associated inverse source problem can, in principle, be established via a Carleman estimate, as demonstrated in~\cite{BeilinaKlibanovBook, ClasonKlibanov:sjsc2007}. One technical obstacle arises: the result in~\cite{BeilinaKlibanovBook, ClasonKlibanov:sjsc2007} addresses scalar-valued functions, while $\mathbf{W}$ is a vector field. Nonetheless, the argument therein can be extended to the vector case with only minor modifications. Since the uniqueness of system~\eqref{3_3} depends on the assumption that $\rho$ and $\mathbf{H}_0$ are known, a condition not imposed in our main framework, we present this uniqueness discussion only for illustrative purposes. The full proof is omitted, and we refer the interested reader to~\cite{BeilinaKlibanovBook, ClasonKlibanov:sjsc2007} for further details.
\end{remark}

\section{The Time Dimension Reduction Method} \label{sec4}

To reduce the computational complexity of the inverse problem, we introduce a spectral time-projection method that eliminates the time variable from the Maxwell system. This reduction transforms the original $(3+1)$-dimensional formulation into a purely three-dimensional problem, thereby facilitating the numerical solution.

For each $N \geq 1$, we apply the time-projection operator $\mathbb{P}^N$, as defined in~\eqref{SN}, to both sides of the equation~\eqref{Maxwell}. This yields the projected equation:
\begin{equation}
\sum_{n = 0}^N \nabla \times \left( \mu^{-1}(\mathbf{x}) \nabla \times \mathbf{e}_n(\mathbf{x}) \right) \Psi_n(t)
+ \varepsilon(\mathbf{x}) \, \mathbb{P}^N\left[ \frac{\partial^2 \mathbf{E}(\mathbf{x}, t)}{\partial t^2} \right]
= \mathbf{0}, \quad \mathbf{x} \in \mathbb{R}^3, \quad t > 0,
\label{3.1}
\end{equation}
where
the vector
\begin{equation*}    
\begin{bmatrix} \mathbf{e}_0 & \mathbf{e}_1 & \dots & \mathbf{e}_N \end{bmatrix}^{\top} = \mathbb{F}^N[\mathbf{E}]
\end{equation*}
represents the Fourier coefficients of $\mathbf{E}$.
To eliminate the time variable, we multiply both sides of \eqref{3.1} by $e^{-2t} \Psi_m(t)$ and integrate over $(0, T)$ for each $m \in \{0, 1, \dots, N\}$. Using the orthonormality of $\{\Psi_n\}_{n \geq 0}$ in $L^2_{e^{-2t}}(0, T)$, we obtain
\begin{equation}
\nabla \times \left( \mu^{-1}(\mathbf{x}) \nabla \times \mathbf{e}_m(\mathbf{x}) \right)
+ \varepsilon(\mathbf{x}) \int_{0}^T e^{-2t} \mathbf{E}_{tt}(\mathbf{x}, t) \Psi_m(t) \, dt
= {\bf 0}, \quad \mathbf{x} \in \Omega.
\label{3.2}
\end{equation}
Define the coefficient matrix
\begin{equation*}
s_{mn} = \langle \Psi''_n, \Psi_m\rangle_{L^2_{e^{-2t}}(0, T)} = \int_0^T e^{-2t} \Psi_n''(t) \Psi_m(t) \, dt,
\end{equation*}
and add the term $\varepsilon(\mathbf{x}) \sum_{n = 0}^N s_{mn} \mathbf{e}_n(\mathbf{x})$ to both sides of \eqref{3.2}, giving
\begin{multline}
\nabla \times \left( \mu^{-1}(\mathbf{x}) \nabla \times \mathbf{e}_m(\mathbf{x}) \right)
+ \varepsilon(\mathbf{x}) \sum_{n = 0}^N s_{mn} \mathbf{e}_n(\mathbf{x}) \\
= \varepsilon(\mathbf{x}) \sum_{n = 0}^N s_{mn} \mathbf{e}_n(\mathbf{x}) 
- \varepsilon(\mathbf{x}) \int_0^T e^{-2t} \mathbf{E}_{tt}(\mathbf{x}, t) \Psi_m(t) \, d t.
\label{3.3}
\end{multline}

We now state a result for approximating the right-hand side of \eqref{3.3}:

\begin{Lemma}
Let $u \in H^2((0, T); L^2(\Omega))$. Let $\begin{bmatrix} u_0 & u_1 & \dots & u_N \end{bmatrix}^{\top} = \mathbb{F}^N[u]$. Then
\[
\lim_{N \to \infty} \sum_{m = 0}^\infty \left\|
    \sum_{n = 0}^N s_{mn} u_n(\mathbf{x}) 
    - \int_0^T e^{-2t} u_{tt}(\mathbf{x}, t) \Psi_m(t) \, dt
\right\|_{L^2(\Omega)}^2 = 0.
\]
\label{lem3.1}
\end{Lemma}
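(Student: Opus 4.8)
The plan is to recognize that the bracketed quantity inside the lemma is exactly the $m$-th weighted Fourier coefficient of the difference between the second time-derivative of the truncation $\mathbb{P}^N[u]$ and the true second derivative $u_{tt}$, and then to collapse the double sum over $m$ and $\mathbf{x}$ into a single space-time norm via Parseval's identity and Tonelli's theorem. The vanishing of that norm is then supplied directly by Proposition~\ref{Prop_Trong}.

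First I would rewrite the first term. Since $s_{mn} = \langle \Psi_n'', \Psi_m\rangle_{L^2_{e^{-2t}}(0,T)}$ and $u_n(\mathbf{x}) = \langle u(\mathbf{x},\cdot), \Psi_n\rangle_{L^2_{e^{-2t}}(0,T)}$, linearity of the inner product gives $\sum_{n=0}^N s_{mn} u_n(\mathbf{x}) = \langle \partial_{tt}\mathbb{P}^N[u](\mathbf{x},\cdot), \Psi_m\rangle_{L^2_{e^{-2t}}(0,T)}$, because $\partial_{tt}\mathbb{P}^N[u](\mathbf{x},\cdot) = \sum_{n=0}^N u_n(\mathbf{x})\Psi_n''$ is a finite sum of the smooth functions $\Psi_n''$ (smoothness guaranteed by Proposition~\ref{prop2.1}). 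The second term is by definition $\langle u_{tt}(\mathbf{x},\cdot),\Psi_m\rangle_{L^2_{e^{-2t}}(0,T)}$. Hence the bracketed difference equals $\langle f_N(\mathbf{x},\cdot), \Psi_m\rangle_{L^2_{e^{-2t}}(0,T)}$, where $f_N(\mathbf{x},\cdot) := \partial_{tt}\mathbb{P}^N[u](\mathbf{x},\cdot) - u_{tt}(\mathbf{x},\cdot)$.

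Next, because $u \in H^2((0,T);L^2(\Omega))$ gives $u_{tt}\in L^2(\Omega\times(0,T))$, Fubini's theorem yields $u_{tt}(\mathbf{x},\cdot)\in L^2(0,T) \subset L^2_{e^{-2t}}(0,T)$ for a.e.\ $\mathbf{x}$; together with the finite sum $\partial_{tt}\mathbb{P}^N[u](\mathbf{x},\cdot)\in L^2_{e^{-2t}}(0,T)$, this shows $f_N(\mathbf{x},\cdot)\in L^2_{e^{-2t}}(0,T)$ for a.e.\ $\mathbf{x}$. Applying Parseval's identity in the complete orthonormal system $\{\Psi_m\}_{m\geq 0}$ of $L^2_{e^{-2t}}(0,T)$ gives $\sum_{m=0}^\infty |\langle f_N(\mathbf{x},\cdot),\Psi_m\rangle|^2 = \|f_N(\mathbf{x},\cdot)\|^2_{L^2_{e^{-2t}}(0,T)}$ for a.e.\ $\mathbf{x}$. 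Since all summands are nonnegative, Tonelli's theorem permits interchanging $\sum_m$ and $\int_\Omega$, so the entire left-hand side of the lemma equals $\|\partial_{tt}\mathbb{P}^N[u] - u_{tt}\|^2_{L^2_{e^{-2t}}((0,T);L^2(\Omega))}$. This quantity tends to $0$ as $N\to\infty$ by Proposition~\ref{Prop_Trong}, completing the argument.

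The main obstacle is the regularity budget, not any of the above manipulations. The reduction itself is an exact identity requiring only $u\in H^2$, but the convergence input, Proposition~\ref{Prop_Trong}, is stated for $u\in H^k$ with $k\geq 5$. Differentiating the truncation twice amplifies the high modes, so the map $u\mapsto \partial_{tt}\mathbb{P}^N[u]$ is \emph{not} uniformly bounded from $H^2$ into $L^2$, and a naive density argument approximating an $H^2$ function by $H^5$ functions will not close the gap. I would therefore either invoke the additional time regularity actually available in the application (where $u=\mathbf{E}$ solves the Maxwell system), or align the hypothesis of the lemma with that of Proposition~\ref{Prop_Trong}. The coefficient identity, the Parseval step, and the Tonelli interchange are all routine.
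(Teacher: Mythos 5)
Your proof is correct and is essentially the paper's own argument: the paper likewise identifies the bracketed difference with the $m$-th weighted Fourier coefficient of $\sum_{n=0}^N u_n\Psi_n'' - u_{tt}$, collapses the sum over $m$ and the integral over $\Omega$ into the $L^2_{e^{-2t}}((0,T);L^2(\Omega))$ norm via Parseval's identity, and then invokes Proposition~\ref{Prop_Trong}. The regularity mismatch you flag is real but it is a gap in the paper itself, not in your reduction: the lemma assumes only $u \in H^2((0,T);L^2(\Omega))$, yet the paper's proof applies Proposition~\ref{Prop_Trong} --- stated for $H^k((0,T);L^2(\Omega))$ with $k \geq 5$ --- without comment, so your proposed remedies (aligning the lemma's hypothesis with that of the proposition, or exploiting the extra time regularity of the Maxwell solution in the application) are exactly what would be needed to make the statement rigorous as written.
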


\begin{proof}
By Proposition~\ref{Prop_Trong}, we have
\begin{equation}
\lim_{N \to \infty} \left\|
    \sum_{n=0}^N \left\langle u(\cdot, \cdot), \Psi_n \right\rangle_{L^2_{e^{-2t}}(0, T)} \Psi_n''(t)
    - \frac{\partial^2 u}{\partial t^2}
\right\|^2_{L^2_{e^{-2t}}((0, T); L^2(\Omega))} = 0.
\label{3.6}
\end{equation}

Using Parseval's identity and the orthonormality of $\{\Psi_n\}$, we can write the left-hand side of \eqref{3.6} as
\begin{equation}
\sum_{m = 0}^\infty \int_{\Omega} \left| \left\langle
    \sum_{n=0}^N \left\langle u(\mathbf{x}, \cdot), \Psi_n \right\rangle \Psi_n''(t)
    - \frac{\partial^2 u(\mathbf{x}, \cdot)}{\partial t^2}, \Psi_m(t)
\right\rangle_{L^2_{e^{-2t}}(0, T)} \right|^2 d\mathbf{x}.
\label{3.7}
\end{equation}

The definitions of $s_{mn}$ and $u_n$ give
\[
\left\langle \sum_{n=0}^N \left\langle u(\mathbf{x}, \cdot), \Psi_n \right\rangle \Psi_n'', \Psi_m \right\rangle_{L^2_{e^{-2t}}(0, T)} = \sum_{n=0}^N s_{mn} u_n(\mathbf{x}),
\]
and
\[
\left\langle \frac{\partial^2 u(\mathbf{x}, \cdot)}{\partial t^2}, \Psi_m \right\rangle_{L^2_{e^{-2t}}(0, T)} = \int_0^T e^{-2t} u_{tt}(\mathbf{x}, t) \Psi_m(t) \, dt.
\]

Substituting into \eqref{3.7} and combining with \eqref{3.6} gives
\begin{multline*}
\left\|
    \sum_{n=0}^N \left\langle u(\cdot, \cdot), \Psi_n \right\rangle \Psi_n''(t)
    - \frac{\partial^2 u}{\partial t^2}
\right\|^2_{L^2_{e^{-2t}}((0, T); L^2(\Omega))} \\
= \sum_{m = 0}^\infty \int_{\Omega} \left| \sum_{n=0}^N s_{mn} u_n(\mathbf{x}) 
- \int_0^T e^{-2t} u_{tt}(\mathbf{x}, t) \Psi_m(t) \, dt \right|^2 d\mathbf{x},
\end{multline*}
which yields the desired convergence.
\end{proof}

Thanks to Lemma~\ref{lem3.1}, the right-hand side of \eqref{3.3} vanishes as $N \to \infty$, leading to the simplified equation
\begin{equation}
\nabla \times \left( \mu^{-1}(\mathbf{x}) \nabla \times \mathbf{e}_m(\mathbf{x}) \right)
+ \varepsilon(\mathbf{x}) \sum_{n = 0}^N s_{mn} \mathbf{e}_n(\mathbf{x}) 
= \mathbf{0}, \quad \mathbf{x} \in \Omega.
\label{3.4}
\end{equation}

This system represents a coupled family of elliptic equations in three spatial dimensions, with each $\mathbf{e}_m$ corresponding to a temporal mode of the electric field. The boundary conditions for $\mathbf{e}_m$ are derived from the measured data via time-projection as given in equations \eqref{Dir} and \eqref{Neu}, completing the full reduction of the original $(3+1)$-dimensional problem to a sequence of 3D systems.

To determine boundary conditions for each $\mathbf{e}_m$, we apply the Fourier operator $\mathbb{F}^N$ to the boundary data in \eqref{data}, yielding:
\begin{equation}
\mathbf{e}_m(\mathbf{x}) = \int_0^T e^{-2t} \mathbf{E}(\mathbf{x}, t) \Psi_m(t) \, dt
= \int_0^T e^{-2t} \mathbf{F}(\mathbf{x}, t) \Psi_m(t) \, dt =: \mathbf{f}_m(\mathbf{x}),
\label{Dir}
\end{equation}
and
\begin{equation}
\partial_\nu \mathbf{e}_m(\mathbf{x}) = \int_0^T e^{-2t} \partial_\nu \mathbf{E}(\mathbf{x}, t) \Psi_m(t) \, dt
= \int_0^T e^{-2t} \mathbf{G}(\mathbf{x}, t) \Psi_m(t) \, dt =: \mathbf{g}_m(\mathbf{x}).
\label{Neu}
\end{equation}

Combining the reduced system \eqref{3.4} with the boundary conditions \eqref{Dir} and \eqref{Neu}, we obtain the following system of equations for each $m = 0, \dots, N$:
\begin{equation}
\begin{cases}
\nabla \times \left( \mu^{-1}(\mathbf{x}) \nabla \times \mathbf{e}_m(\mathbf{x}) \right)
+ \varepsilon(\mathbf{x}) \sum_{n=0}^N s_{mn} \mathbf{e}_n(\mathbf{x}) = \mathbf{0}, & \mathbf{x} \in \Omega, \\
\mathbf{e}_m(\mathbf{x}) = \mathbf{f}_m(\mathbf{x}), & \mathbf{x} \in \partial\Omega, \\
\partial_\nu \mathbf{e}_m(\mathbf{x}) = \mathbf{g}_m(\mathbf{x}), & \mathbf{x} \in \partial\Omega.
\end{cases}
\label{3.11}
\end{equation}

\begin{remark}
Solving the system \eqref{3.11} for the vector of coefficients \[\mathbb{F}^N[\mathbf{E}] = \begin{bmatrix} \mathbf{e}_0 & \cdots & \mathbf{e}_N \end{bmatrix}^\top\] enables the reconstruction of the electric field $\mathbf{E}(\mathbf{x}, t)$ via the spectral expansion $\mathbb{S}^N[\mathbf{E}]$. In particular, the initial state $\mathbf{E}_0 = \mathbf{E}(\cdot, 0)$ can be directly recovered, which is essential for solving the inverse problem numerically.
\end{remark}

\begin{remark}
The Legendre-polynomial exponential basis $\{ \Psi_n \}_{n \geq 0}$ satisfies $\Psi_n'' \not\equiv 0$ for all $n \geq 0$. This property is essential for ensuring the accuracy of our numerical approximation. If a different basis $\{\phi_n\}_{n \geq 0}$ is chosen such that $\phi_n'' \equiv 0$ for some $n$, key components of the solution may be omitted. 

Notable examples include the standard Fourier basis and classical Legendre polynomials without the exponential weight. In such cases, the coefficients
\[
s_{mn} = \int_0^T \phi_n''(t) \phi_m(t) \, dt
\]
vanish for all $m$, causing the corresponding term $\mathbf{e}_n$ to disappear from equation~\eqref{3.4}. This results in a significant loss of information in the numerical reconstruction.

An alternative to the Legendre-polynomial exponential basis is the polynomial-exponential basis introduced in~\cite{Klibanov:jiip2017}, which also satisfies the condition $\phi_n'' \not\equiv 0$ and yields comparable numerical performance.
\label{rem31}
\end{remark}

Having reduced the original time-dependent Maxwell system to the static system~\eqref{3.11} through time-projection and Fourier expansion, we now focus on its numerical resolution. In the next section, we present a quasi-reversibility method for solving this system and establish a convergence theorem that includes a stability analysis in the presence of measurement noise.

\section{The quasi-reversibility method} \label{quasi}

Recall that $\mathbf{F}$ and $\mathbf{G}$ represent the measured boundary data defined in~\eqref{data}. Let $\mathbf{F}^*$ and $\mathbf{G}^*$ denote their corresponding noise-free versions, and let $\mathbf{F}^\delta$ and $\mathbf{G}^\delta$ represent the noisy measurements. The measurement noise is assumed to satisfy the bound
\begin{equation}
    \|\mathbf{F}^* - \mathbf{F}^{\delta}\|_{L^2_{e^{-2t}}((0, T); L^2(\partial \Omega)^3)}^2 + 
    \|\mathbf{G}^* - \mathbf{G}^{\delta}\|_{L^2_{e^{-2t}}((0, T); L^2(\partial \Omega)^3)}^2 \leq \delta^2,
    \label{noise}
\end{equation}
where $\delta > 0$ denotes the noise level.
The corresponding noise-free and noisy Fourier coefficients of the boundary data used in~\eqref{3.11} are denoted by $\mathbf{f}_m^*$ and $\mathbf{g}_m^*$, and $\mathbf{f}_m^\delta$ and $\mathbf{g}_m^\delta$, respectively, for each $m \in \{0, \dots, N\}$. Then, by Parseval's identity and the bound in~\eqref{noise}, we obtain
\begin{equation}
    \sum_{m = 0}^N \left[
        \|\mathbf{f}_m^* - \mathbf{f}_m^\delta\|_{L^2(\partial \Omega)^3}^2 + 
        \|\mathbf{g}_m^* - \mathbf{g}_m^\delta\|_{L^2(\partial \Omega)^3}^2
    \right] \leq \delta^2.
    \label{noise_m}
\end{equation}

The boundary value problem in equation~\eqref{3.11} is over-determined and, in general, may not possess a solution, especially when the boundary data $\mathbf{f}_m$ and $\mathbf{g}_m$ are replaced by their noisy versions $\mathbf{f}_m^\delta$ and $\mathbf{g}^\delta_m$ respectively. To address this challenge, we seek the best-fit solution by minimizing the following Tikhonov-type regularized functional:
\begin{multline}
	J_{N, \delta, \epsilon}(\mathbf{V}) = \sum_{m=0}^N \Bigg[ \int_{\Omega} \left| \nabla \times (\mu^{-1}(\x)\nabla \times \mathbf{v}_m(\mathbf{x})) + \varepsilon(\x)\sum_{n=0}^N s_{mn} \mathbf{v}_n(\mathbf{x})\right|^2 \, d\mathbf{x} \\
	+ \int_{\partial \Omega} \left| \mathbf{v}_m(\mathbf{x}) - \mathbf{f}_m^\delta(\mathbf{x}) \right|^2 \, d\sigma(\mathbf{x}) 
	+ \int_{\partial \Omega} \left| \partial_{\nu} \mathbf{v}_m(\mathbf{x}) - \mathbf{g}_m^\delta(\mathbf{x}) \right|^2 \, d\sigma(\mathbf{x}) 
	+ \epsilon \| \mathbf{v}_m \|_{H^3(\Omega)^3}^2 \Bigg],
	\label{4.1}
\end{multline}
where $\mathbf{V} = \begin{bmatrix} \mathbf{v}_0 & \mathbf{v}_1 & \dots & \mathbf{v}_N \end{bmatrix}^\top \in H^3(\Omega)^{3 \times (N+1)}$, and $\epsilon > 0$ is a regularization parameter.
We minimize $J_{N, \delta, \epsilon}$ over the Hilbert space
\[
	H = \left\{ \mathbf{V} = \begin{bmatrix} \mathbf{v}_0 & \mathbf{v}_1 & \dots & \mathbf{v}_N \end{bmatrix}^\top : \mathbf{v}_m \in H^3(\Omega)^3 \text{ for } m = 0, \dots, N \right\},
\]
and denote by $\mathbf{V}_{\min}^{N, \delta, \epsilon} = \begin{bmatrix} \mathbf{v}_0^{N, \delta, \epsilon} & \mathbf{v}_1^{N, \delta, \epsilon} & \dots & \mathbf{v}_N^{N, \delta, \epsilon} \end{bmatrix}^\top$ the unique minimizer of the functional.

\begin{remark}[The existence of ${\bf V}_{\min}^{N, \delta, \epsilon}$]
The functional $J_{N, \delta, \epsilon}$ defined in \eqref{4.1} admits a unique minimizer in the space $H$. This is because $J_{N, \delta, \epsilon}$ is strictly convex, coercive, and continuous on $H$. The strict convexity stems from the regularization term $\epsilon \| \mathbf{v}_m \|^2_{H^3(\Omega)^3}$, which enforces smoothness and ensures that the second variation of $J_{N, \delta, \epsilon}$ is strictly positive. Coercivity is also a consequence of this term, as it guarantees that $J_\epsilon(\mathbf{V}) \to \infty$ whenever $\| \mathbf{V} \|_{H^3(\Omega)^{3 \times (N + 1)}} \to \infty$. Along with the weak lower semicontinuity of $J_{N, \delta, \epsilon}$, these properties collectively ensure the existence and uniqueness of a minimizer $\mathbf{V}_{\min}^{N, \delta,\epsilon} \in H$.
\end{remark}

As described in Remark \ref{rem31}, our time-dimensional reduction strategy employs the space-time expansion $\mathbb{S}^N[\mathbf{V}^{N, \delta, \epsilon}_{\mathrm{min}}]$, defined in \eqref{SN1}, to approximate the true minimum-norm solution $\mathbf{E}^*$ to \eqref{3_1}.
 While this approximation is computationally practical and often accurate in practice, it naturally raises a theoretical question regarding its reliability. The remainder of this section is devoted to addressing this question affirmatively.

The following is our main convergence result.

\begin{Theorem}[ Convergence of the quasi-reversibility method for noisy data]
Suppose the noisy boundary data $\mathbf{F}^\delta$ and $\mathbf{G}^\delta$ satisfy the noise constraint~\eqref{noise}, and assume that $\delta^2 = o(\epsilon)$ as $\delta, \epsilon \to 0$ where $\epsilon$ is the regularization parameter. Then:

\begin{enumerate}
    \item There exists an integer $N(\delta) > 0$ such that for all $N > N(\delta)$, the following estimate holds:
    \begin{equation}
        \sum_{m=0}^N \int_{\Omega} \left|  
        \varepsilon(\mathbf{x}) \sum_{n=1}^N s_{mn} \mathbf{e}_n^*(\mathbf{x}) 
        - \varepsilon(\mathbf{x}) \int_0^T e^{-2t} \partial_{tt} \mathbf{E}^*(\mathbf{x}, t) \Psi_m(t) \, dt 
        \right|^2 d\mathbf{x} \leq \delta^2.
        \label{4.10}
    \end{equation}

    \item Define the parameter set
    \begin{equation}
        \Theta := \left\{ (N, \delta, \epsilon): N \geq N(\delta), \, \delta^2 = o(\epsilon) \right\}.
        \label{5.5}
    \end{equation}
    Then, as $(N, \delta, \epsilon) \in \Theta$ tends to $(\infty, 0^+, 0^+)$, we have the following convergence results:
    \begin{equation}
        \lim_{\Theta \ni (N, \delta, \epsilon) \to (\infty, 0^+, 0^+)} \left\| \mathbb{S}^N[\mathbf{V}_{\min}^{N, \delta, \epsilon}] - \mathbf{E}^* \right\|_{L^2((0, T); H^2(\Omega)^3)} = 0,
        \label{5.6}
    \end{equation}
    \begin{equation}
        \lim_{\Theta \ni (N, \delta, \epsilon) \to (\infty, 0^+, 0^+)} \left\| \partial_t \mathbb{S}^N[\mathbf{V}_{\min}^{N, \delta, \epsilon}] - \partial_t \mathbf{E}^* \right\|_{L^2((0, T); L^2(\Omega)^3)} = 0,
        \label{5.7}
    \end{equation}
    \begin{equation}
        \lim_{\Theta \ni (N, \delta, \epsilon) \to (\infty, 0^+, 0^+)} \left\| \mathbb{S}^N[\mathbf{V}_{\min}^{N, \delta, \epsilon}](\cdot, 0) - \mathbf{E}^*(\cdot, 0) \right\|_{L^2(\Omega)^3} = 0.
        \label{5.8888}
    \end{equation}
\end{enumerate}
\label{thm41}
\end{Theorem}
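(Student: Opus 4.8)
The plan is to handle the two parts in turn, with part~(1) supplying the decisive ingredient for the trial‑function estimate in part~(2).

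For part~(1), I would apply Lemma~\ref{lem3.1} componentwise to $u=\mathbf{E}^*$, which lies in $H^2((0,T);L^2(\Omega)^3)$ since $\mathbf{E}^*\in\mathcal{S}$. As the coefficients $\mathbf{e}_n^*=\mathbb{F}^N[\mathbf{E}^*]_n$ are the genuine Fourier coefficients (independent of $N$), Lemma~\ref{lem3.1} gives $\sum_{m=0}^{\infty}\bigl\|\sum_{n=0}^N s_{mn}\mathbf{e}_n^*-\int_0^T e^{-2t}\partial_{tt}\mathbf{E}^*\Psi_m\,dt\bigr\|_{L^2(\Omega)}^2\to0$. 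Multiplying by the bounded factor $\varepsilon$ and bounding the finite sum over $m\le N$ by the full series, I obtain an $N(\delta)$ beyond which the left-hand side of \eqref{4.10} is at most $\delta^2$, which is exactly \eqref{4.10}. For part~(2), the first step is to bound the minimal value of the functional by inserting the truncated true coefficients $\mathbb{F}^N[\mathbf{E}^*]$ into $J_{N,\delta,\epsilon}$. Because $\mathbf{E}^*$ solves the Maxwell equation, its interior integrand coincides with the quantity controlled in \eqref{4.10}, hence is $\le\delta^2$ for $N>N(\delta)$; the boundary terms are $\le\delta^2$ via \eqref{noise_m} together with $\mathbf{e}_m^*=\mathbf{f}_m^*$ and $\partial_\nu\mathbf{e}_m^*=\mathbf{g}_m^*$; and the regularization term is $\le\epsilon\|\mathbf{E}^*\|^2$ by \eqref{2.4}. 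Minimality then yields $J_{N,\delta,\epsilon}(\mathbf{V}_{\min}^{N,\delta,\epsilon})\le 2\delta^2+\epsilon\,\|\mathbf{E}^*\|_{L^2_{e^{-2t}}((0,T);H^3(\Omega)^3)}^2$.

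From this single inequality I would read off three facts about $\mathbf{W}:=\mathbb{S}^N[\mathbf{V}_{\min}^{N,\delta,\epsilon}]$. The regularization term gives $\sum_m\|\mathbf{v}_m^{N,\delta,\epsilon}\|_{H^3}^2\le 2\delta^2/\epsilon+\|\mathbf{E}^*\|^2$, which stays bounded since $\delta^2=o(\epsilon)$; by \eqref{2.4} this bounds $\mathbf{W}$ in $L^2_{e^{-2t}}((0,T);H^3(\Omega)^3)$ with $\limsup\|\mathbf{W}\|^2\le\|\mathbf{E}^*\|^2$. The interior term forces the residual $\curl(\mu^{-1}\curl\mathbf{W})+\varepsilon\partial_{tt}\mathbf{W}=\sum_m r_m\Psi_m$ to $0$ in $L^2_{e^{-2t}}((0,T);L^2(\Omega)^3)$, and the boundary terms give $\mathbf{W}\to\mathbf{F}^*$, $\partial_\nu\mathbf{W}\to\mathbf{G}^*$ on $\Gamma_T$. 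Here lies the key structural point: since $\Psi_n=e^tQ_n$ with $\deg Q_n=n$, both $\Psi_n'$ and $\Psi_n''$ are $e^t$ times polynomials of degree $\le n$, so for $n\le N$ they belong to $\mathbb{V}^N$; consequently $\partial_{tt}\mathbf{W}=\mathbb{P}^N[\partial_{tt}\mathbf{W}]$ with $m$-th coefficient $\sum_n s_{mn}\mathbf{v}_n^{N,\delta,\epsilon}$. Rewriting the interior equation as $\varepsilon\sum_n s_{mn}\mathbf{v}_n=r_m-\curl(\mu^{-1}\curl\mathbf{v}_m)$ and using the $H^3$-bound on the $\mathbf{v}_m$ with the smoothness of $\mu,\varepsilon$, I get $\partial_{tt}\mathbf{W}$ bounded in $L^2_{e^{-2t}}((0,T);L^2(\Omega)^3)$; Lemma~\ref{lem4.2} then bounds $\partial_t\mathbf{W}$ as well, so $\mathbf{W}$ is bounded in $L^2((0,T);H^3(\Omega)^3)\cap H^2((0,T);L^2(\Omega)^3)$.

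With these uniform bounds, along any sequence $(N_k,\delta_k,\epsilon_k)\in\Theta$ tending to $(\infty,0^+,0^+)$ I would pass to a subsequence so that $\mathbf{W}\rightharpoonup\mathbf{W}^*$ in $L^2_{e^{-2t}}((0,T);H^3(\Omega)^3)$ and $\partial_{tt}\mathbf{W}\rightharpoonup\partial_{tt}\mathbf{W}^*$ in $L^2((0,T);L^2(\Omega)^3)$. Passing to the limit in the weakly continuous operator $\curl(\mu^{-1}\curl\,\cdot)+\varepsilon\partial_{tt}$ against the residual, which tends to $0$ strongly, shows $\mathbf{W}^*$ solves the Maxwell equation, while weak continuity of the trace operators combined with the strong boundary convergence gives $\mathbf{W}^*=\mathbf{F}^*$ and $\partial_\nu\mathbf{W}^*=\mathbf{G}^*$ on $\Gamma_T$; hence $\mathbf{W}^*\in\mathcal{S}$. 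Weak lower semicontinuity yields $\|\mathbf{W}^*\|\le\liminf\|\mathbf{W}\|\le\|\mathbf{E}^*\|$, whereas $\mathbf{W}^*\in\mathcal{S}$ forces $\|\mathbf{W}^*\|\ge\|\mathbf{E}^*\|$, so by the uniqueness in Theorem~\ref{thm31} we conclude $\mathbf{W}^*=\mathbf{E}^*$. Moreover $\|\mathbf{W}\|\to\|\mathbf{E}^*\|=\|\mathbf{W}^*\|$, so in the Hilbert space $L^2_{e^{-2t}}((0,T);H^3(\Omega)^3)$ weak convergence upgrades to strong convergence.

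Strong $H^3$-in-space convergence immediately gives \eqref{5.6}, since the $L^2((0,T);H^2(\Omega)^3)$ norm is dominated by it (equivalent weight, weaker spatial norm). For \eqref{5.7} I would set $A:=\mathbf{W}-\mathbf{E}^*$ and use the Ehrling-type interpolation $\|A'\|_{L^2}^2\le\eta\,\|A''\|_{L^2}^2+C_\eta\|A\|_{L^2}^2$ on $(0,T)$: since $\|A\|\to0$ while $\|A''\|$ stays bounded, letting $k\to\infty$ and then $\eta\to0$ gives $\partial_t A\to0$ in $L^2((0,T);L^2(\Omega)^3)$. Combining \eqref{5.6} and \eqref{5.7} shows $A\to0$ in $H^1((0,T);L^2(\Omega)^3)$, and the embedding $H^1((0,T);L^2)\hookrightarrow C([0,T];L^2)$ yields the trace convergence \eqref{5.8888}. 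A standard argument, using that the limit $\mathbf{E}^*$ is independent of the chosen subsequence, promotes these to convergence of the full net over $\Theta$. The step I expect to be the main obstacle is controlling $\partial_{tt}\mathbf{W}$: it is precisely the closure of $\{e^tp:\deg p\le N\}$ under differentiation, giving $\partial_{tt}\mathbf{W}=\mathbb{P}^N[\partial_{tt}\mathbf{W}]$, that ties the second time derivative back to the interior residual and makes the whole time-regularity argument go through.
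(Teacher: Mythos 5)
Your proof is correct and follows the same skeleton as the paper's: bound $J_{N,\delta,\epsilon}(\mathbf{V}_{\min}^{N,\delta,\epsilon})$ by inserting the trial function $\mathbb{F}^N[\mathbf{E}^*]$ and invoking \eqref{4.10}, \eqref{noise_m}, and \eqref{2.4}; derive uniform bounds in $L^2_{e^{-2t}}((0,T);H^3(\Omega)^3)$ and on $\varepsilon\,\partial_{tt}\mathbb{S}^N[\mathbf{V}_{\min}^{N,\delta,\epsilon}]$ (with Lemma~\ref{lem4.2} controlling $\partial_t$); pass to a limit that solves \eqref{3_1}; and identify that limit with $\mathbf{E}^*$ via the minimum-norm uniqueness of Theorem~\ref{thm31}. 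Two of your refinements deserve comment, because they occur exactly where the paper is terse. First, you make explicit the structural fact that $\Psi_n''$ equals $e^t$ times a polynomial of degree at most $n$, so $\partial_{tt}\mathbb{S}^N[\mathbf{V}]\in\mathbb{V}^N$ with $m$-th coefficient $\sum_n s_{mn}\mathbf{v}_n$; the paper uses this silently when it bounds the residual's $L^2_{e^{-2t}}$-norm by the \emph{finite} Parseval sum in \eqref{5.10}--\eqref{5.11}, and that inequality would be in the wrong direction for a function outside $\mathbb{V}^N$. Second, for the strong convergences, the paper appeals to ``compactness of Sobolev embeddings,'' which (via an Aubin--Lions argument) yields \eqref{5.6} but does not by itself give strong convergence of $\partial_t\mathbb{S}^N[\mathbf{V}_{\min}^{N,\delta,\epsilon}]$ in $L^2((0,T);L^2(\Omega)^3)$, since the time derivative carries no spatial regularity; your route --- Radon--Riesz upgrade (weak convergence plus convergence of norms gives strong convergence in the Hilbert space $L^2_{e^{-2t}}((0,T);H^3(\Omega)^3)$), then the small-parameter interpolation $\|A'\|^2\le\eta\|A''\|^2+C_\eta\|A\|^2$ for \eqref{5.7}, then $H^1((0,T);L^2)\hookrightarrow C([0,T];L^2)$ for \eqref{5.8888} --- closes this gap cleanly. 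One caution: justify that interpolation inequality by the scalar one-dimensional Ehrling/Gagliardo--Nirenberg inequality on $(0,T)$ applied at a.e.\ $\mathbf{x}\in\Omega$ and integrated over $\Omega$ (the constant $C_\eta$ is independent of $\mathbf{x}$), not by Ehrling's lemma for the Bochner embedding $H^2((0,T);L^2(\Omega))\hookrightarrow H^1((0,T);L^2(\Omega))$, which is not compact.
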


\begin{proof}

The existence of $N(\delta)$ and the estimate in~\eqref{4.10} follow from Lemma~\ref{lem3.1} directly.  
We now focus on proving the second part of the theorem. The argument is divided into several steps.

{\bf Step 1.}
We begin by applying the time-projection operator $\mathbb{P}^N$ to both sides of the exact model~\eqref{3_1}. This leads to the following equation for each Fourier mode $\mathbf{e}_m^*(\mathbf{x})$ of $\mathbb{F}^N[\mathbf{E}^*]$:
\begin{equation}
\nabla \times \left( \mu^{-1}(\mathbf{x}) \nabla \times \mathbf{e}_m^*(\mathbf{x}) \right)
+ \varepsilon(\mathbf{x}) \int_0^T e^{-2t} \mathbf{E}_{tt}^*(\mathbf{x}, t) \Psi_m(t) \, dt
= \mathbf{0}, \quad \mathbf{x} \in \Omega.
\label{4.8}
\end{equation}

Next, using the minimality of $\mathbf{V}_{\min}^{N, \delta, \epsilon}$, the noise estimate~\eqref{noise_m}, and the inequality $(a + b)^2 \leq 2a^2 + 2b^2$, we obtain:
\begin{align}
&J_{N, \delta, \epsilon}(\mathbf{V}_{\min}^{N, \delta, \epsilon}) 
\leq J_{N, \delta, \epsilon}(\mathbb{F}^N[\mathbf{E}^*]) \notag \\
&\quad \leq \sum_{m=0}^N \Bigg[
\int_{\Omega} \left| \nabla \times (\mu^{-1}(\x) \nabla \times \mathbf{e}_m^*(\x)) + \varepsilon(\x) \sum_{n=1}^N s_{mn} \mathbf{e}_n^*(\x) \right|^2 \, d\mathbf{x} \\
& \hspace{1cm} +  \delta^2 + \epsilon \| \mathbf{e}_m^*(\x) \|_{H^3(\Omega)^3}^2
\Bigg] \notag \\
&\quad\leq \sum_{m=0}^N \Bigg[
2\int_{\Omega} \left| \nabla \times (\mu^{-1}(\x) \nabla \times \mathbf{e}_m^*(\x)) + \varepsilon(\x) \int_0^T e^{-2t} \partial_{tt} \mathbf{E}^*(\x, t) \, dt \right|^2 \, d\mathbf{x} \notag \\
&\hspace{1cm} + 2\int_{\Omega} \left| -\varepsilon(\x) \int_0^T e^{-2t} \partial_{tt} \mathbf{E}^*(\x, t) \, dt + \varepsilon(\x) \sum_{n=1}^N s_{mn} \mathbf{e}_n^*(\x) \right|^2 \, d\mathbf{x} \\
& \hspace{1cm} + \delta^2 + \epsilon \| \mathbf{e}_m^* \|_{H^3(\Omega)^3}^2
\Bigg]
\label{4.7}
\end{align}
for all $N \geq N(\delta)$.
Using \eqref{4.10}, \eqref{4.8}, and \eqref{4.7}, we obtain the bound:
\begin{equation}
	J_{N, \delta, \epsilon}(\mathbf{V}_{\min}^{N, \delta, \epsilon}) \leq 3\delta^2 + \epsilon \sum_{m = 0}^N \| \mathbf{e}_m^* \|^2_{H^3(\Omega)^3}.
	\label{5.8}
\end{equation}
Combining this with \eqref{4.1} and using \eqref{2.4}, we get:
\begin{equation}
	\| \mathbb{S}^N[\mathbf{V}_{\min}^{N, \delta, \epsilon}] \|^2_{L^2_{e^{-2t}}((0, T); H^3(\Omega)^3)} = \sum_{m=0}^N \| \mathbf{v}_m \|^2_{H^3(\Omega)^3} \leq \frac{3\delta^2}{\epsilon} + \sum_{m=0}^N \| \mathbf{e}_m^* \|^2_{H^3(\Omega)^3} \leq C,
	\label{5.9}
\end{equation}
provided that $\delta^2 = o(\epsilon)$ and $N > N(\delta)$.

\textbf{Step 2.} We next establish bounds for $\partial_t \mathbb{S}^N[\mathbf{V}_{\min}^{N, \delta, \epsilon}]$ and $\partial_{tt} \mathbb{S}^N[\mathbf{V}_{\min}^{N, \delta, \epsilon}]$. More precisely,
\begin{align}
\| \varepsilon & \, \partial_{tt} \mathbb{S}^N[\mathbf{V}_{\min}^{N, \delta, \epsilon}] \|_{L^2_{e^{-2t}}((0, T); L^2(\Omega)^3)}
\notag
\\
&\leq \left\| \varepsilon \, \partial_{tt} \mathbb{S}^N[\mathbf{V}_{\min}^{N, \delta, \epsilon}]
+ \nabla \times (\mu^{-1} \nabla \times \mathbb{S}^N[\mathbf{V}_{\min}^{N, \delta, \epsilon}]) \right\|_{L^2_{e^{-2t}}((0, T); L^2(\Omega)^3)} \notag
\\
&\hspace{4cm}
 + \left\| \nabla \times (\mu^{-1} \nabla \times \mathbb{S}^N[\mathbf{V}_{\min}^{N, \delta, \epsilon}]) \right\|_{L^2_{e^{-2t}}((0, T); L^2(\Omega)^3)} \notag \\
&\leq \left( \sum_{m=0}^N \left| \left\langle \varepsilon \, \partial_{tt} \mathbb{S}^N[\mathbf{V}_{\min}^{N, \delta, \epsilon}] + \nabla \times (\mu^{-1} \nabla \times \mathbb{S}^N[\mathbf{V}_{\min}^{N, \delta, \epsilon}]), \Psi_m \right\rangle_{L^2_{e^{-2t}}((0, T); L^2(\Omega)^3)} \right|^2 \right)^{\frac{1}{2}} \notag \\
&\hspace{4cm} + C \| \mathbb{S}^N[\mathbf{V}_{\min}^{N, \delta, \epsilon}] \|_{L^2_{e^{-2t}}((0, T); H^3(\Omega)^3)},
\label{5.10}
\end{align}
where Parseval's identity has been applied.
A direct computation shows that
\begin{multline}
\sum_{m=0}^N \left| \left\langle \varepsilon \, \partial_{tt} \mathbb{S}^N[\mathbf{V}_{\min}^{N, \delta, \epsilon}] + \nabla \times \left( \mu^{-1} \nabla \times \mathbb{S}^N[\mathbf{V}_{\min}^{N, \delta, \epsilon}] \right), \Psi_m \right\rangle \right|^2 \\
= \sum_{m=0}^N \int_{\Omega} \left| \sum_{n=0}^N \varepsilon s_{mn} \mathbf{v}_n^{N, \delta, \epsilon} + \nabla \times \left( \mu^{-1} \nabla \times \mathbf{v}_m^{N, \delta, \epsilon} \right) \right|^2 \, d\mathbf{x}
\leq J_{N, \delta, \epsilon}(\mathbf{V}_{\min}^{N, \delta, \epsilon}).
\label{5.11}
\end{multline}
Combining inequalities \eqref{5.8}, \eqref{5.9}, \eqref{5.10}, and \eqref{5.11}, we obtain the uniform estimate
\begin{equation}
\left\| \varepsilon \, \partial_{tt} \mathbb{S}^N[\mathbf{V}_{\min}^{N, \delta, \epsilon}] \right\|_{L^2_{e^{-2t}}((0, T); L^2(\Omega)^3)} \leq C.
\label{5.12}
\end{equation}

This result provides a bound on the second time derivative of $\mathbb{S}^N[\mathbf{V}_{\min}^{N, \delta, \epsilon}]$. To control the first time derivative, we apply Lemma \ref{lem4.2}, which implies that
\begin{multline}
\left\| \varepsilon \, \partial_t \mathbb{S}^N[\mathbf{V}_{\min}^{N, \delta, \epsilon}] \right\|_{L^2_{e^{-2t}}((0, T); L^2(\Omega)^3)} 
\\
\leq C(\left\| \varepsilon \,  \mathbb{S}^N[\mathbf{V}_{\min}^{N, \delta, \epsilon}] \right\|_{L^2_{e^{-2t}}((0, T); L^2(\Omega)^3)} + \left\| \varepsilon \, \partial_{tt} \mathbb{S}^N[\mathbf{V}_{\min}^{N, \delta, \epsilon}] \right\|_{L^2_{e^{-2t}}((0, T); L^2(\Omega)^3)}) \leq C
\label{5.13}
\end{multline}

\textbf{Step 3.}  
Using the uniform bounds established in \eqref{5.9}, \eqref{5.12}, and \eqref{5.13}, we conclude that the family  
\[
\left\{ \mathbb{S}^N[\mathbf{V}_{\min}^{N, \delta, \epsilon}] : (N, \delta, \epsilon) \in \Theta \right\}
\]  
is bounded in both $L^2((0, T); H^3(\Omega)^3)$ and $H^2((0, T); L^2(\Omega)^3)$, where $\Theta$ is the set defined in \eqref{5.5}. By compactness of Sobolev embeddings, there exists a subsequence, still denoted by $\mathbb{S}^N[\mathbf{V}_{\min}^{N, \delta, \epsilon}]$, that converges as $(N, \delta, \epsilon) \in \Theta \to (\infty, 0^+, 0^+)$ in the following sense:
\[
\partial_{tt} \mathbb{S}^N[\mathbf{V}_{\min}^{N, \delta, \epsilon}] \rightharpoonup \mathbf{z} \quad \text{weakly in } L^2((0, T); H^2(\Omega)^3),
\]
\[
\mathbb{S}^N[\mathbf{V}_{\min}^{N, \delta, \epsilon}] \to \mathbf{z} \quad \text{strongly in } L^2((0, T); H^2(\Omega)^3),
\]
\[
\partial_t \mathbb{S}^N[\mathbf{V}_{\min}^{N, \delta, \epsilon}] \to \partial_t \mathbf{z} \quad \text{strongly in } L^2((0, T); L^2(\Omega)^3),
\]
\[
\mathbb{S}^N[\mathbf{V}_{\min}^{N, \delta, \epsilon}] \to \mathbf{z} \quad \text{strongly in } L^2_{e^{-2t}}((0, T); L^2(\partial \Omega)^3),
\]
\[
\partial_\nu \mathbb{S}^N[\mathbf{V}_{\min}^{N, \delta, \epsilon}] \to \partial_\nu \mathbf{z} \quad \text{strongly in } L^2_{e^{-2t}}((0, T); L^2(\partial \Omega)^3).
\]

We now verify that the limiting function $\mathbf{z}$ solves the original problem \eqref{3_1}. In fact, we have the inequality:
\begin{multline}
\int_{\Omega_T} \left| \nabla \times \left( \mu^{-1}(\mathbf{x}) \nabla \times \mathbf{z} \right) + \varepsilon(\mathbf{x}) \partial_{tt} \mathbf{z} \right|^2 d\mathbf{x} \, dt
+ \int_{\Gamma_T} \left| \mathbf{z} - \mathbf{F}^* \right|^2 d\sigma(\mathbf{x}) \, dt \\
+ \int_{\Gamma_T} \left| \partial_\nu \mathbf{z} - \mathbf{G}^* \right|^2 d\sigma(\mathbf{x}) \, dt
\leq \liminf_{\Theta \ni (N, \delta, \epsilon) \to (\infty, 0^+, 0^+)} J_{N, \delta, \epsilon}(\mathbf{V}_{\min}^{N, \delta, \epsilon}),
\label{5.14}
\end{multline}
where we used the lower semi-continuity of convex functionals and the convergence results above.
Substituting the bound from \eqref{5.8} into \eqref{5.14}, and recalling that $\delta^2 = o(\epsilon)$, we obtain
\begin{multline*}
\int_{\Omega_T} \left| \nabla \times \left( \mu^{-1}(\mathbf{x}) \nabla \times \mathbf{z} \right) + \varepsilon(\mathbf{x}) \partial_{tt} \mathbf{z} \right|^2 d\mathbf{x} \, dt
+ \int_{\Gamma_T} \left| \mathbf{z} - \mathbf{F}^* \right|^2 d\sigma(\mathbf{x}) \, dt \\
+ \int_{\Gamma_T} \left| \partial_\nu \mathbf{z} - \mathbf{G}^* \right|^2 d\sigma(\mathbf{x}) \, dt \\
\leq \liminf_{\Theta \ni (N, \delta, \epsilon) \to (\infty, 0^+, 0^+)} \left[ 3\delta^2 + \epsilon \| \mathbf{E}^* \|^2_{L^2_{e^{-2t}}((0, T); H^3(\Omega)^3)} \right] = 0.
\end{multline*}
Therefore, $\mathbf{z}$ satisfies the original model \eqref{3_1}.

\textbf{Step 4.}  
We now demonstrate that $\mathbf{z} = \mathbf{E}^*$. From the strong convergence of $\mathbb{S}^N[\mathbf{V}_{\min}^{N, \delta, \epsilon}]$ to $\mathbf{z}$ and the bound \eqref{5.9}, together with the assumption $\delta^2 = o(\epsilon)$, we obtain
\begin{align*}
\| \mathbf{z} \|^2_{L^2_{e^{-2t}}((0, T); H^3(\Omega)^3)} 
&= \lim_{\Theta \ni (N, \delta, \epsilon) \to (\infty, 0^+, 0^+)} 
\| \mathbb{S}^N[\mathbf{V}_{\min}^{N, \delta, \epsilon}] \|^2_{L^2_{e^{-2t}}((0, T); H^3(\Omega)^3)} \\
&\leq \lim_{\delta^2 = o(\epsilon) \to 0} \left( \frac{3\delta^2}{\epsilon} + \sum_{m=0}^N \| \mathbf{e}_m^* \|^2_{H^3(\Omega)^3} \right) \\
&\leq \| \mathbf{E}^* \|^2_{L^2_{e^{-2t}}((0, T); H^3(\Omega)^3)}.
\end{align*}

Since $\mathbf{z}$ belongs to the admissible set and achieves the minimum norm, the uniqueness of the minimizer implies that $\mathbf{z} = \mathbf{E}^*$.  
This completes the proof of convergence estimates \eqref{5.6} and \eqref{5.7}.  
Finally, estimate \eqref{5.8888} follows directly from \eqref{5.7} by the standard trace theorem.

\end{proof}

\section{Numerical study}\label{sec6}

In our numerical simulations, we set the computational domain $\Omega = (-1, 1)^3$ and the final time $T = 2.5$. The electric permittivity is taken to be constant, $\varepsilon(\mathbf{x}) = 1$, while the magnetic permeability $\mu(\mathbf{x})$ is defined as
\[
\mu(\mathbf{x}) = 
\begin{cases}
\displaystyle \frac{1}{1 + 0.1 \exp\left(-\dfrac{|\mathbf{x}|^2}{0.5^2 - |\mathbf{x}|^2}\right)}, & \text{if } |\mathbf{x}| < 0.5, \\
1, & \text{otherwise}.
\end{cases}
\]
To generate the simulated data, we numerically solve equation~\eqref{Maxwell} using a prescribed initial condition $\mathbf{E}_0$. Since solving Maxwell's equations on the entire space $\mathbb{R}^3$ is not practical when the magnetic permeability $\mu$ varies spatially, we instead consider a bounded computational domain $G = (-2.5, 2.5)^3 \supset \Omega$ and employ an explicit time-stepping scheme. Provided that the final time $T$ is chosen not too large so that the electromagnetic field does not interact with the boundary of $G$, the resulting solution within $G$, and in particular within $\Omega$, serves as a good approximation of the restriction of the true solution in the full space. When solving the forward problem to generate the data, we impose the additional condition $\partial_t \mathbf{E}(\mathbf{x}, 0) = 0$ to ensure well-posedness of the simulation. However, this initial velocity information is deliberately excluded from the inverse problem formulation to reflect the practical limitation of incomplete data and test our reconstruction method's robustness under under-determined settings.

After computing the electric field $\mathbf{E}^*$ by solving the forward problem, we extract the exact boundary data $\mathbf{F}^*$ and $\mathbf{G}^*$ by evaluating $\mathbf{F}^*(\mathbf{x}, t) = \mathbf{E}^*(\mathbf{x}, t)$ and $\mathbf{G}^*(\mathbf{x}, t) = (\nabla \times \mathbf{E}^*(\mathbf{x}, t)) \times \nu$ for $(\mathbf{x}, t) \in \Gamma_T$. To simulate measurement errors, we perturb these data with additive noise to obtain the noisy boundary observations $\mathbf{F}^\delta$ and $\mathbf{G}^\delta$, corresponding to a relative noise level of $\delta = 10\%$. The noisy data are generated according to
\[
\mathbf{F}^\delta = \mathbf{F}^*(1 + \delta  \texttt{rand}), \quad
\mathbf{G}^\delta = \mathbf{G}^*(1 + \delta  \texttt{rand}),
\]
where \texttt{rand} is a random function returning uniformly distributed values in the interval $[-1, 1]$.

Remark~\ref{rem31} and Theorem~\ref{thm41}, especially \eqref{5.8888}, motivate the following procedure, summarized in Algorithm~\ref{alg}, for computing the initial electric field $\mathbf{E}_0$.
We implement the proposed Algorithm \ref{alg} using a finite difference scheme. The computational domain $\Omega = (-1, 1)^3$ is discretized using a uniform Cartesian grid consisting of $20 \times 20 \times 20$ points. For the time interval $[0, T]$ with $T = 2.5$, we employ a uniform temporal discretization with 73 time steps, resulting in a time step size of $\Delta t = \frac{2.5}{72} \approx 0.0347$. This spatial-temporal discretization is used consistently throughout the numerical simulation to approximate spatial derivatives and advance the electric field in time.

\begin{algorithm}[ht]
\caption{\label{alg} Time-Dimensional Reduction Method for Reconstructing the Initial Electric Field}
\begin{algorithmic}[1]

\Input Measured boundary data $\mathbf{F}^\delta(\mathbf{x}, t)$ and $\mathbf{G}^\delta(\mathbf{x}, t)$, noise level $\delta$.
\Output Approximate initial field $\mathbf{E}_{\mathrm{comp}}(\mathbf{x}) \approx \mathbf{E}^*(\mathbf{x}, 0)$.

\State Choose a regularization parameter $\epsilon$ and truncation order $N$. \label{s1}
\State Construct the basis functions $\{\Psi_n(t)\}_{n=0}^N$.
\State Compute the Fourier modes of the boundary data and source term:
\begin{equation*}
    \begin{array}{rcl}
          \mathbf{f}_m^\delta(\mathbf{x}) &:= \ds\int_0^T e^{-2t} \mathbf{F}^\delta(\mathbf{x}, t) \Psi_m(t) \, dt, \\
    \mathbf{g}_m^\delta(\mathbf{x}) &:= \ds\int_0^T e^{-2t} \mathbf{G}^\delta(\mathbf{x}, t) \Psi_m(t) \, dt. \\
    \end{array}
    \quad \mbox{for } \x \in \Omega, m \in \{0, 1, \dots, N\}.
\end{equation*}
\State Minimize the cost functional $J_{N, \delta, \epsilon}(\mathbf{V})$ defined in~\eqref{4.1}; let ${\bf V}_{\mathrm{comp}} = \mathbf{V}_{\min}^{N, \delta, \epsilon}$ denote the obtained minimizer. \label{s4}

\State Reconstruct the time-dependent approximation:
\[
\mathbb{S}^N[\mathbf{V}_{\min}^{N, \delta, \epsilon}](\mathbf{x}, t) := \sum_{n = 0}^N \mathbf{v}_n^{N, \delta, \epsilon}(\mathbf{x}) \Psi_n(t) 
\quad (\x, t) \in \Omega_T.
\]

\State Output the approximate initial field:
\[
\mathbf{E}_{\mathrm{comp}}(\mathbf{x}) := \mathbb{S}^N[\mathbf{V}_{\min}^{N, \delta, \epsilon}](\mathbf{x}, 0).
\]
\end{algorithmic}
\end{algorithm}

We determine the regularization parameter $\epsilon$ and the truncation order $N$ in Step~\ref{s1} of Algorithm~\ref{alg} through a trial-and-error process. Specifically, we select Test~1 as a reference case and manually tune the values of $N$ and $\epsilon$ to obtain satisfactory reconstruction results. Once acceptable performance is achieved for this reference test, we fix these parameters and apply the same values for all subsequent experiments. In our implementation, we choose $N = 15$ and $\epsilon = 10^{-6}$.

To minimize the functional $J_{N, \delta, \epsilon}$ in Step~\ref{s4} of Algorithm~\ref{alg}, we employ a linear algebra routine available in MATLAB. Since $J_{N, \delta, \epsilon}$ has a least-squares form involving only linear expressions of the unknowns, the minimization reduces to solving a large linear system. However, due to the high dimensionality of our 3D inverse problem and the fact that the electric field $\mathbf{E}$ has three vector components, standard solvers such as \texttt{lsqlin} are not suitable in our case, as they require more memory than is available on our machine. Instead, we use MATLAB's \texttt{pcg} (preconditioned conjugate gradient) method to efficiently solve the system. For further details on the usage of \texttt{pcg}, we refer the reader to the MATLAB documentation.

The remaining steps of Algorithm~\ref{alg} are straightforward to implement and do not pose significant computational difficulties. Once the basis functions are constructed and the Fourier modes of the data are computed via numerical quadrature, the reconstruction of the space-time approximation and the evaluation of the initial field follow directly from the definition of the truncated expansion $\mathbb{S}^N$. These steps involve basic operations such as function evaluations, summations, and interpolation on the computational grid.

\subsection*{Test 1}

The true initial condition is given by the vector field \[\mathbf{E}^{\rm true}_0(\mathbf{x}) = (E^{\rm true}_1, E^{\rm true}_2, E^{\rm true}_3)\] where each component is defined as follows:
\[
E^{\rm true}_1(\mathbf{x}) =
\begin{cases}
1, & \text{if } (x - 0.4)^2 + y^2 + (z + 0.3)^2 < 0.35^2, \\
0, & \text{otherwise},
\end{cases}
\]
\[
E^{\rm true}_2(\mathbf{x}) =
\begin{cases}
1, & \text{if } 0.4^2 < x^2 + z^2 < 0.8^2 \text{ and } |y| < 0.8, \\
0, & \text{otherwise},
\end{cases}
\]
\[
E^{\rm true}_3(\mathbf{x}) =
\begin{cases}
1, & \text{if } \max\{0.4x^2, (y - 0.55)^2 + (z - 0.3)^2\} < 0.3^2, \\
0, & \text{otherwise}.
\end{cases}
\]
The true initial electric field $\mathbf{E}^{\rm true}_0$ consists of three geometrically distinct components: a solid sphere centered at $(0.4, 0, -0.3)$ for $E_1^{\rm true}$, a horizontal cylindrical shell with a central void aligned along the $y$-axis for $E_2^{\rm true}$, and a small horizontally oriented solid cylinder offset in the $y$-direction for $E_3^{\rm true}$.

\begin{figure}
    \centering
	\subfloat[]{\includegraphics[height = .2\textwidth,width = .3\textwidth]{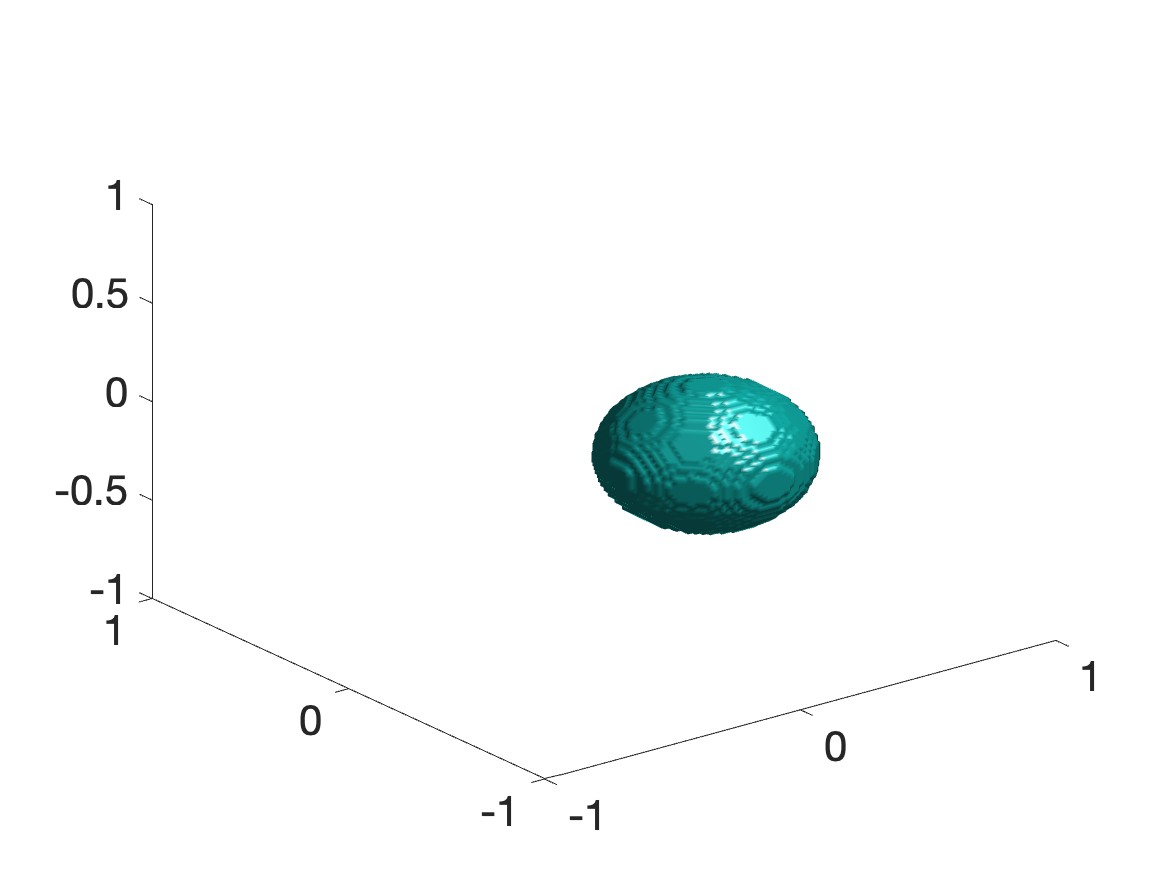}}
	\hfill
	\subfloat[]{\includegraphics[height = .2\textwidth,width = .3\textwidth]{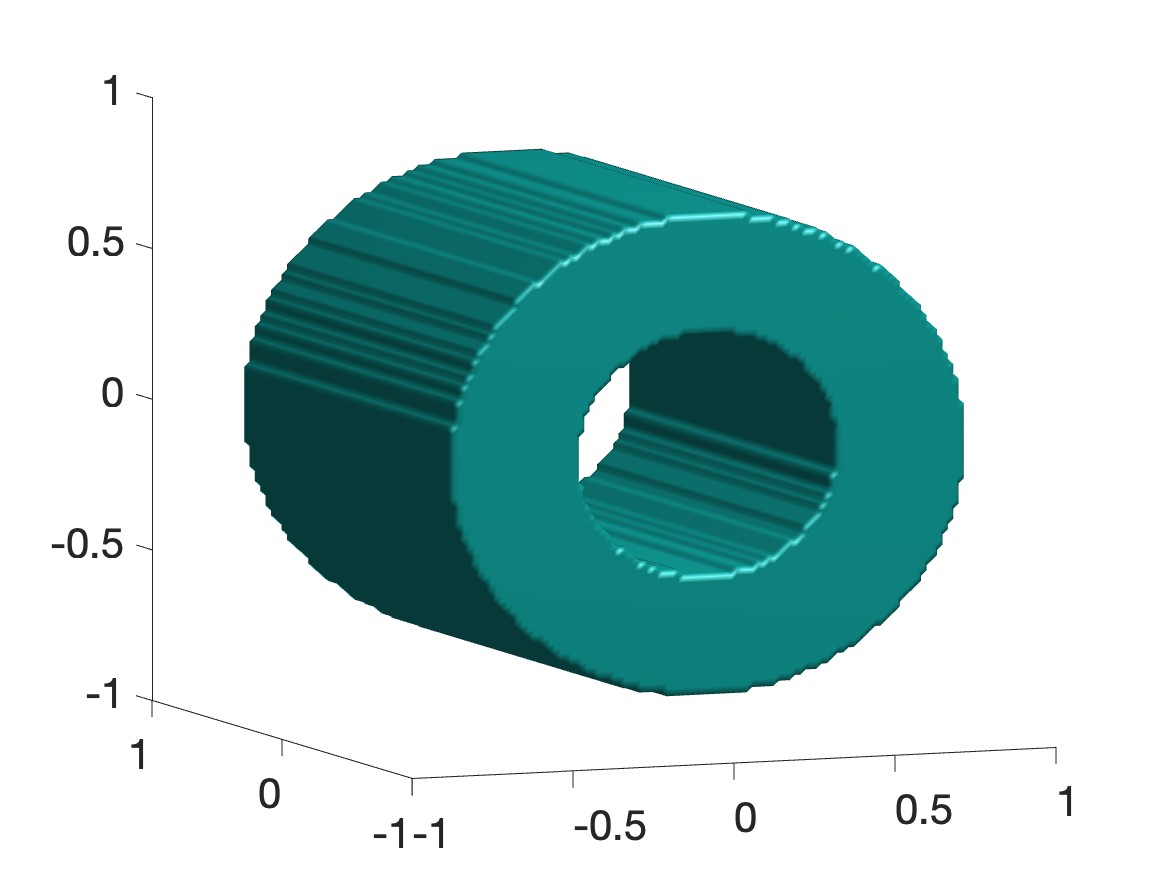}}
	\hfill
	\subfloat[]{\includegraphics[height = .2\textwidth,width = .3\textwidth]{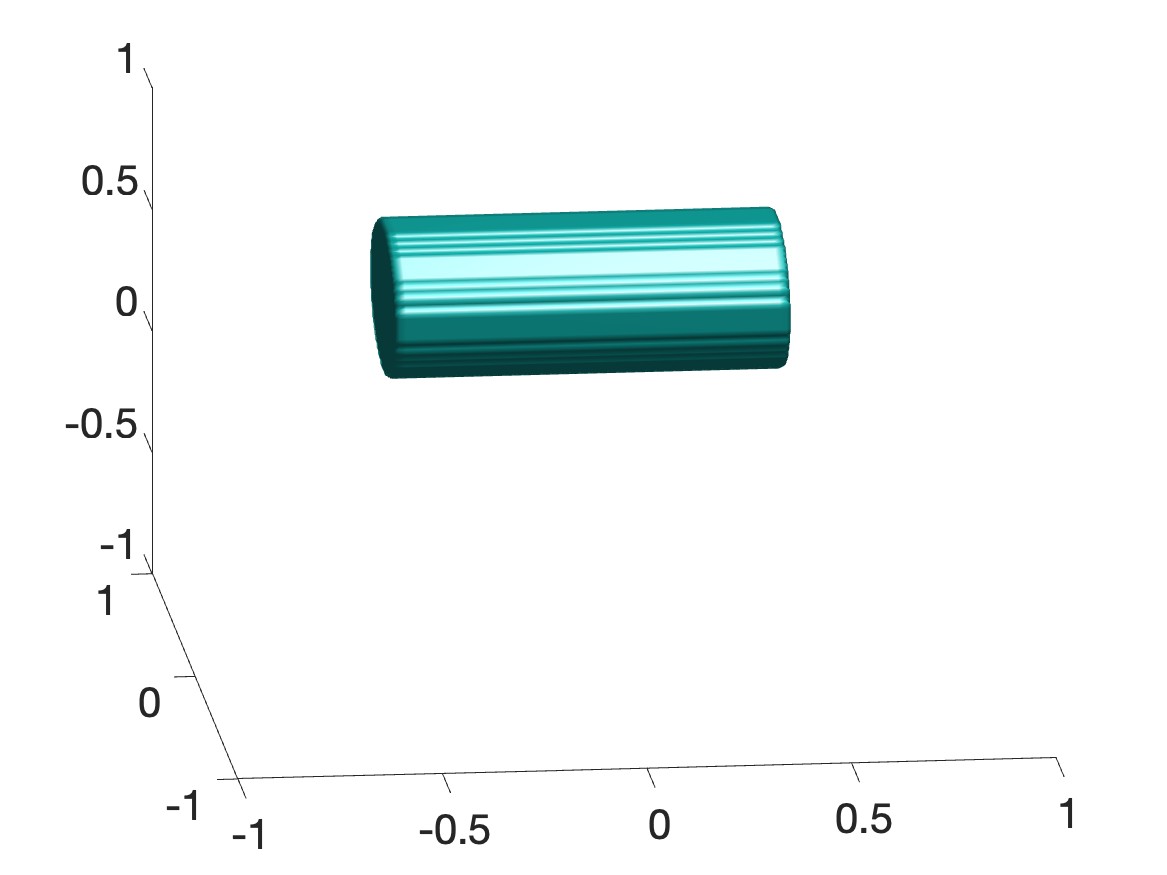}}
	
	\subfloat[]{\includegraphics[height = .2\textwidth,width = .3\textwidth]{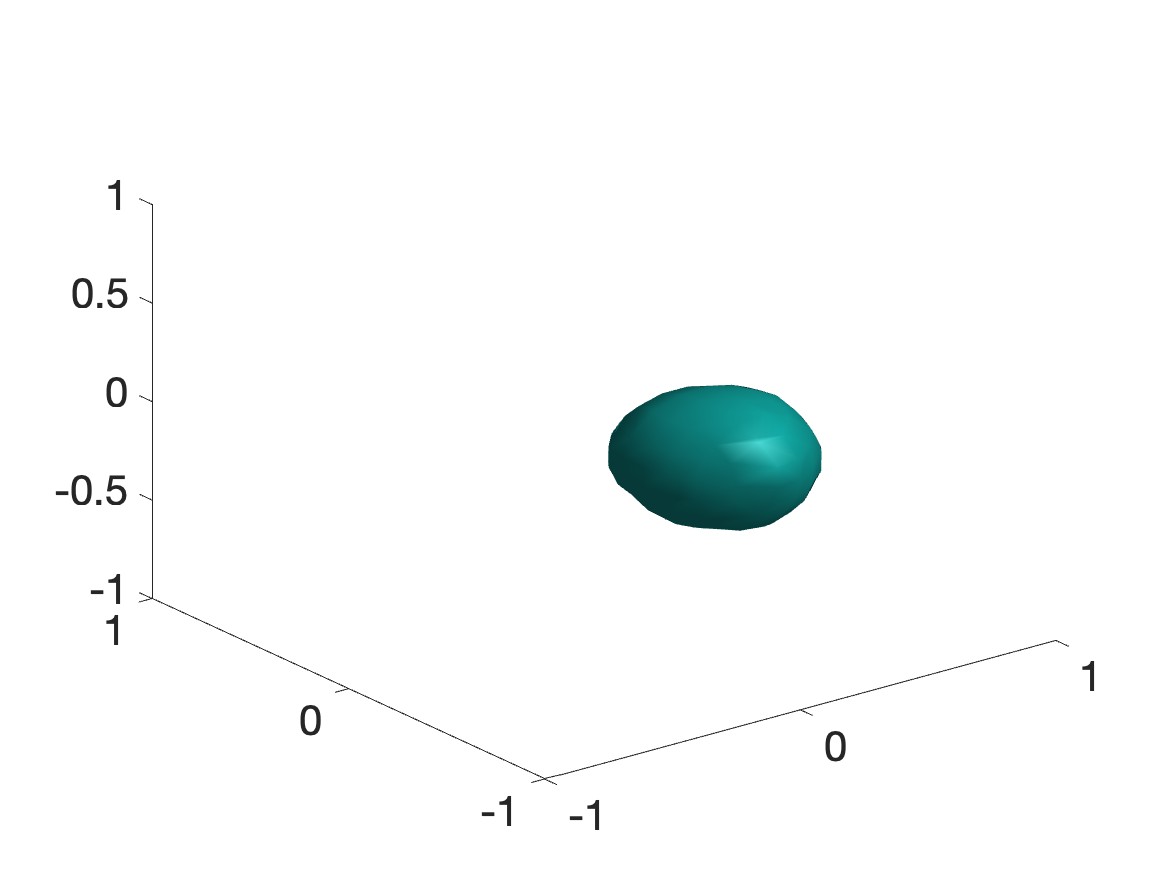}}
	\hfill
	\subfloat[]{\includegraphics[height = .2\textwidth,width = .3\textwidth]{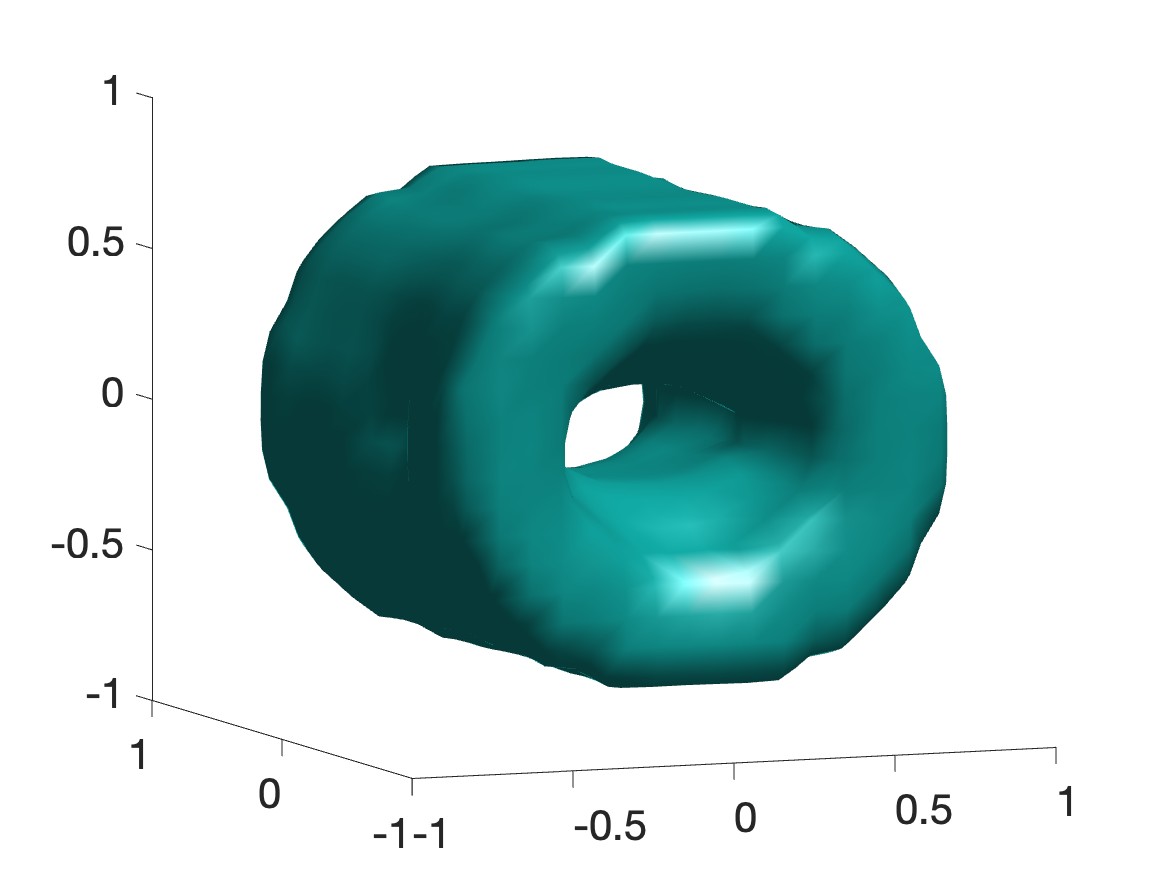}}
	\hfill
	\subfloat[]{\includegraphics[height = .2\textwidth,width = .3\textwidth]{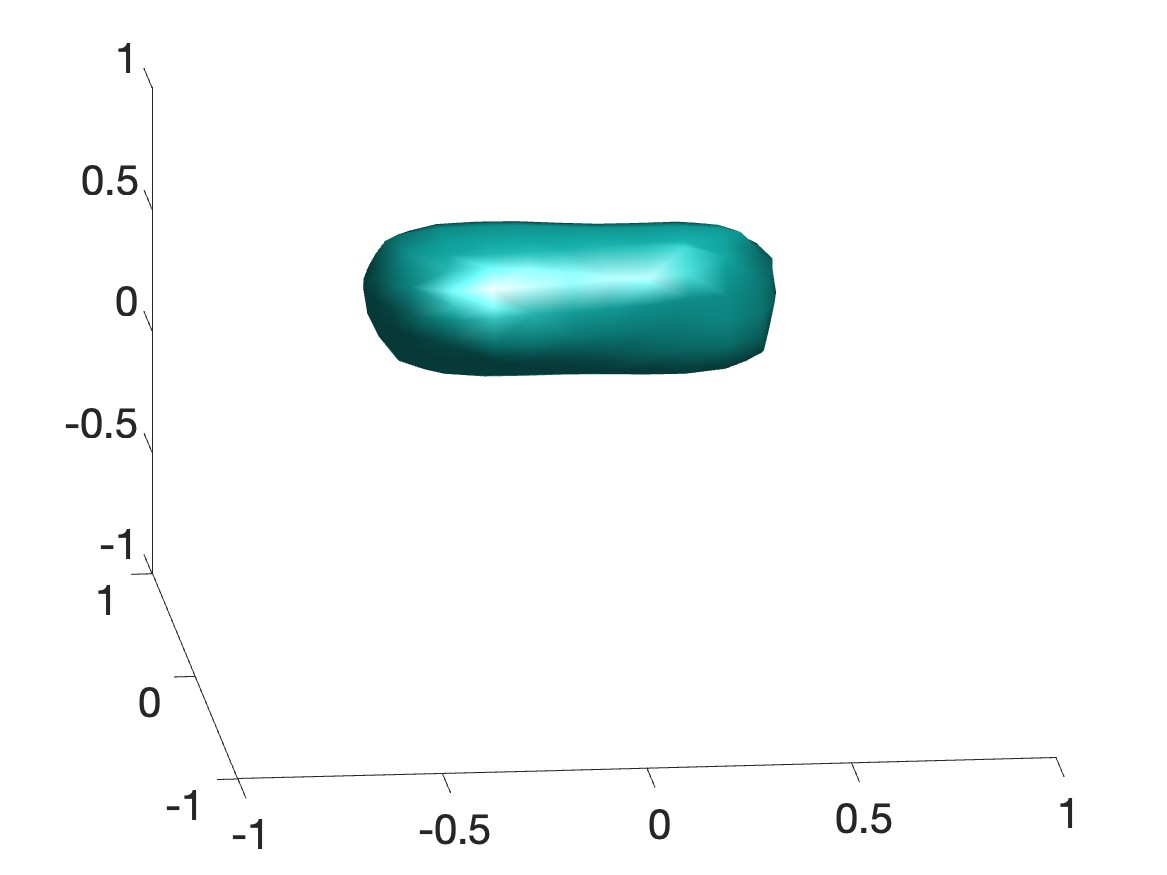}}
	
	\subfloat[]{\includegraphics[height = .2\textwidth,width = .3\textwidth]{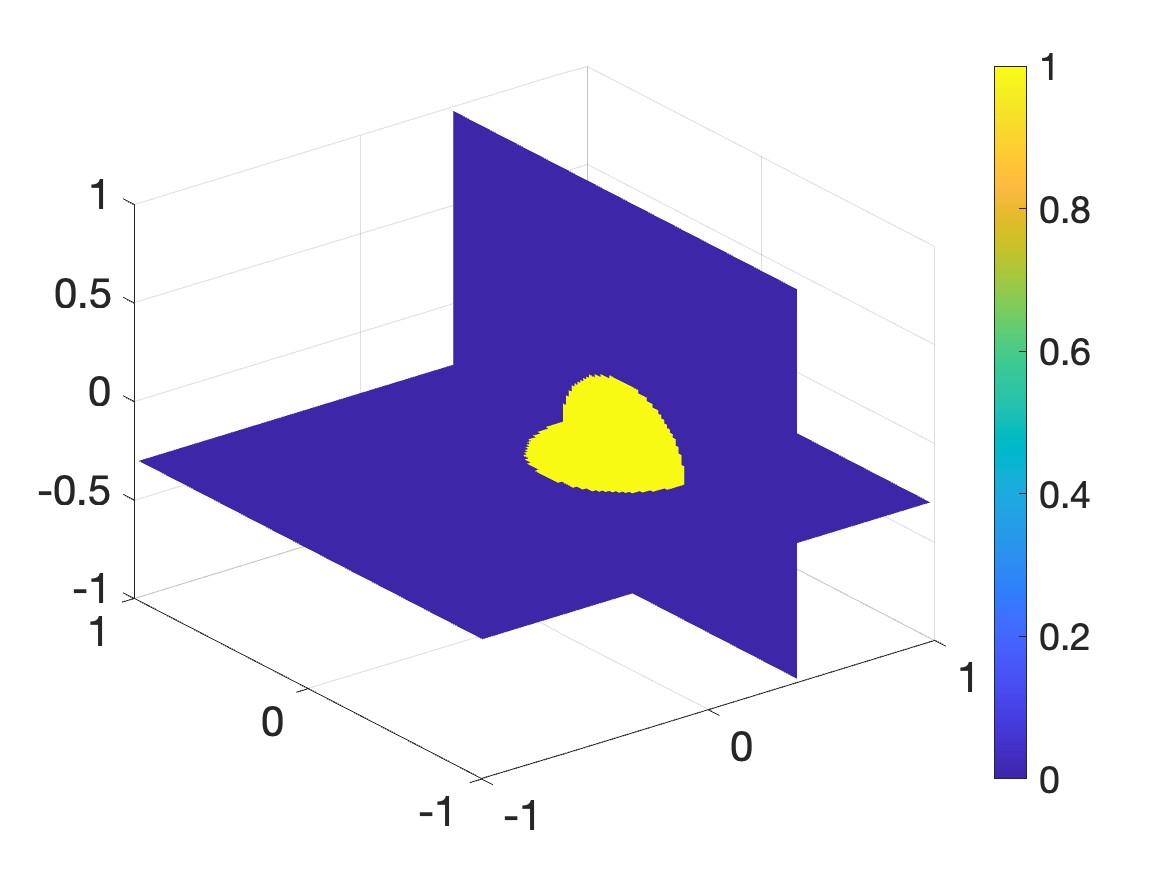}}
	\hfill
	\subfloat[]{\includegraphics[height = .2\textwidth,width = .3\textwidth]{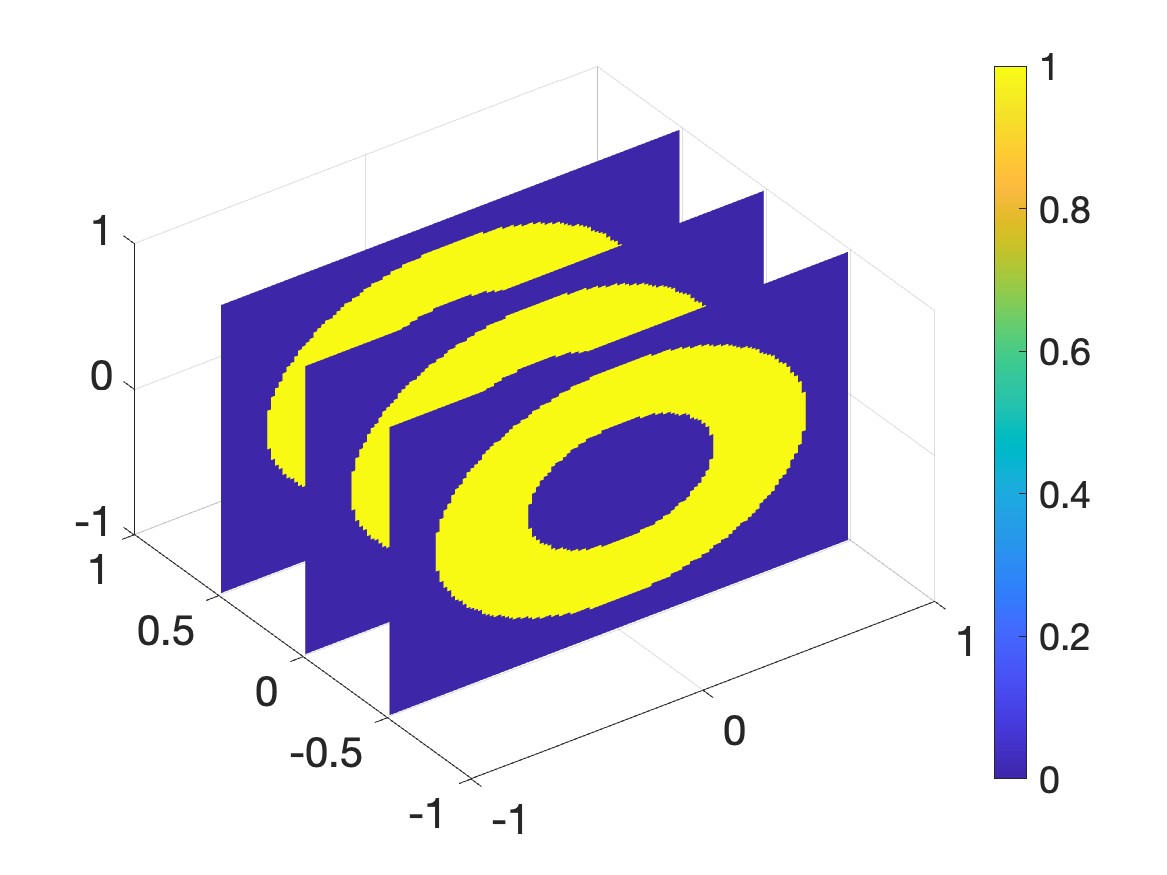}}
	\hfill
	\subfloat[]{\includegraphics[height = .2\textwidth,width = .3\textwidth]{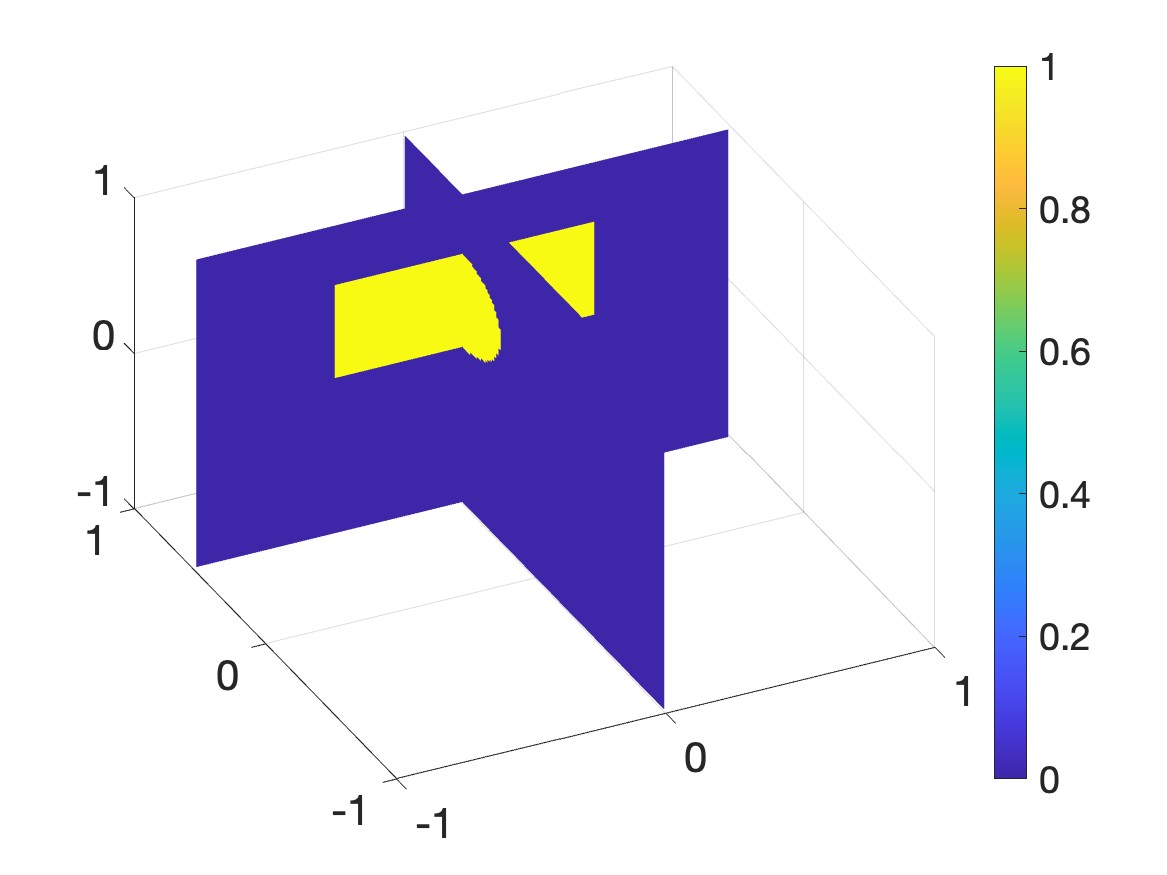}}
	
	\subfloat[]{\includegraphics[height = .2\textwidth,width = .3\textwidth]{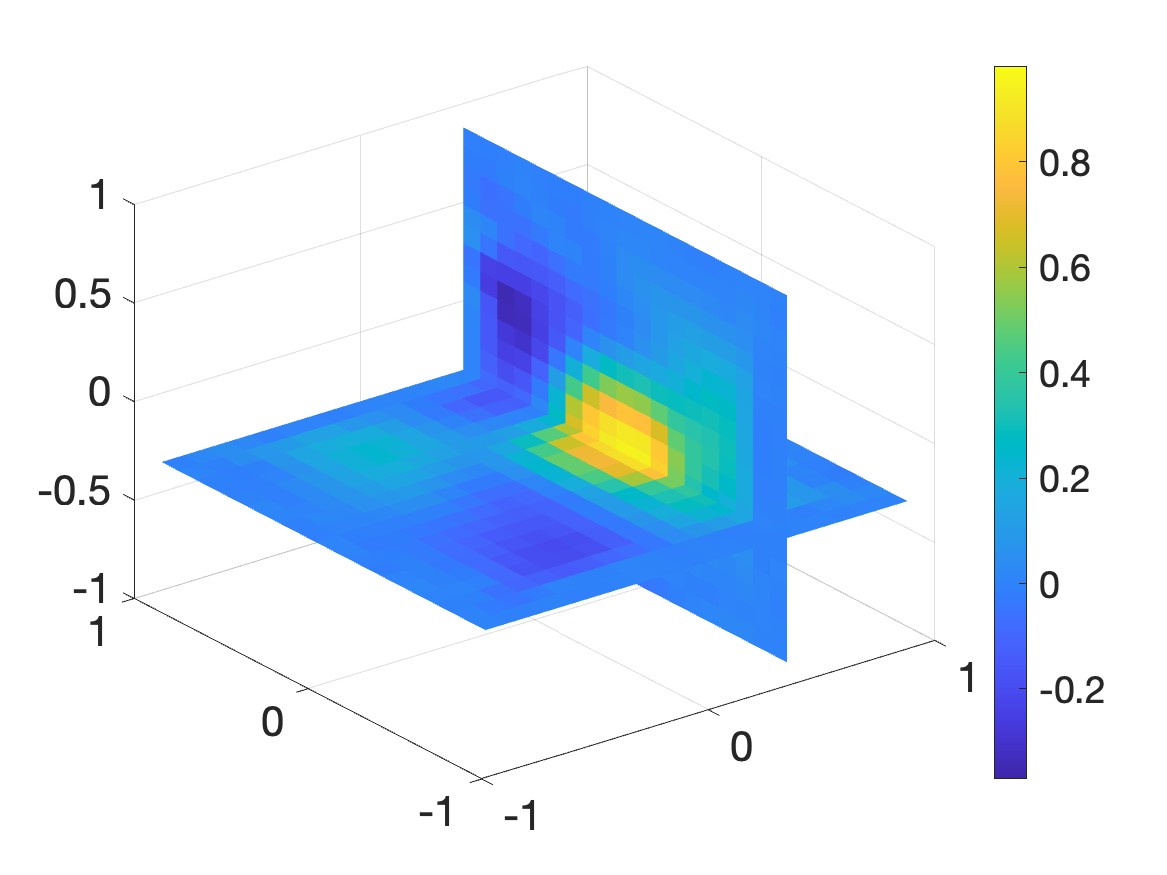}}
	\hfill
	\subfloat[]{\includegraphics[height = .2\textwidth,width = .3\textwidth]{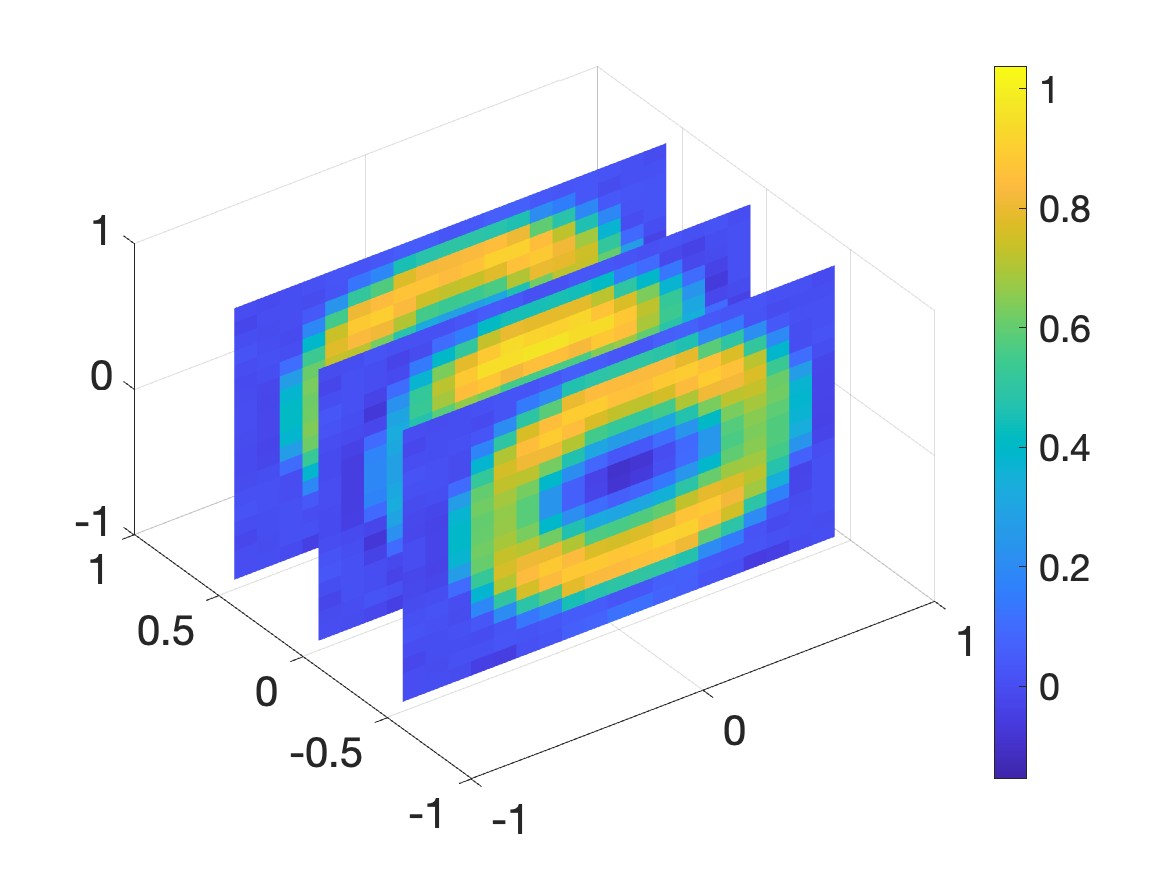}}
	\hfill
	\subfloat[]{\includegraphics[height = .2\textwidth,width = .3\textwidth]{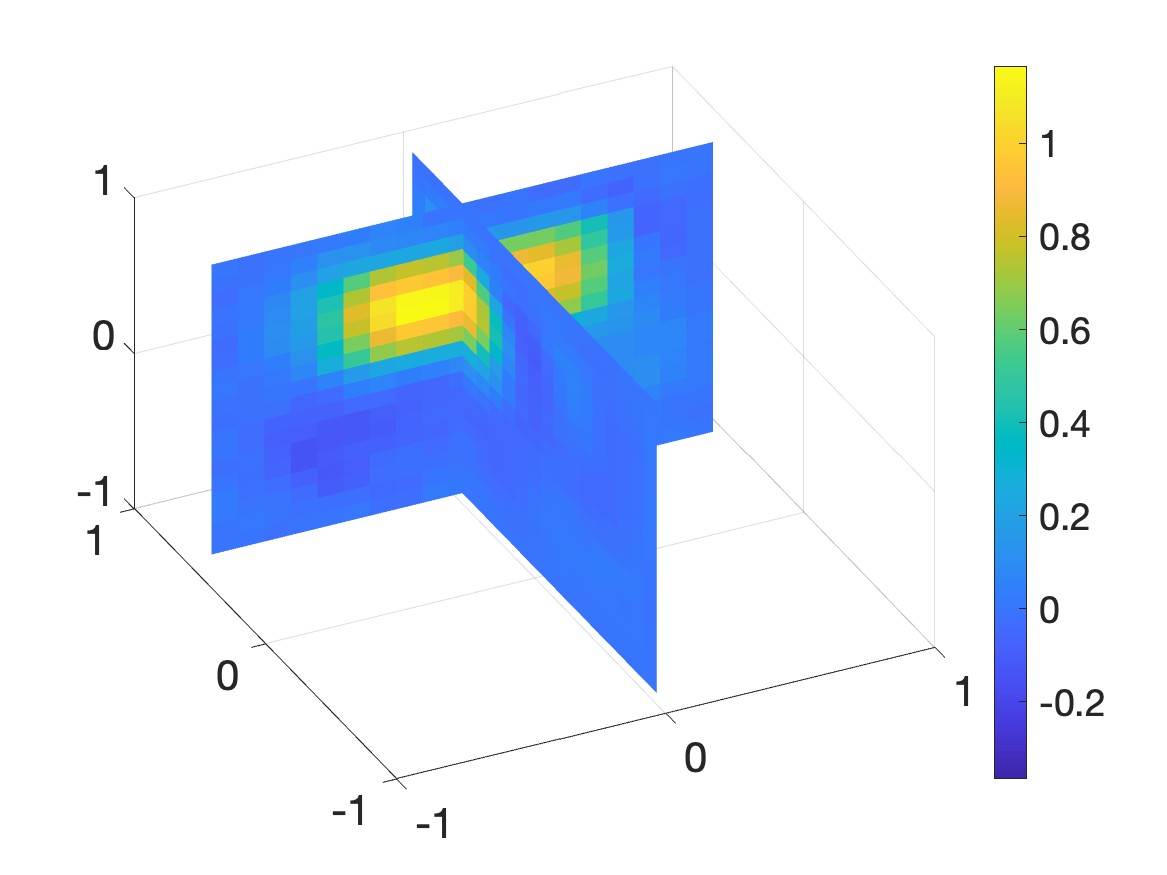}}
	
	\caption{
    Visualization of the true and reconstructed components of the initial electric field $\mathbf{E}^{\rm true}_0 = (E^{\rm true}_1, E^{\rm true}_2, E^{\rm true}_3)$. 
    Top row (a)--(c): isosurfaces of the true components. 
    Second row (d)--(f): isosurfaces of the reconstructed components $E^{\rm comp}_j$, $j = 1,2,3$. 
    Third row (g)--(i): representative 2D slices of the true components. 
    Bottom row (j)--(l): corresponding slices of the reconstructed components.  
    The figure demonstrates both geometric and intensity-level agreement between the true and reconstructed fields.
    }
    \label{fig_test1}
\end{figure}

Figure~\ref{fig_test1} provides a comprehensive comparison between the true and reconstructed initial field components using both isosurface and slice visualizations. The top two rows show that the reconstructed isosurfaces (d)--(f) accurately replicate the true geometric structures (a)--(c). In particular, the spherical inclusion in $E_1^{\rm true}$ is sharply recovered in $E_1^{\rm comp}$, while the cylindrical shell with a central hole in $E_2^{\rm true}$ is well-resolved in $E_2^{\rm comp}$, including the internal void. The small horizontally-oriented cylinder in $E_3^{\rm true}$ is also correctly located and shaped in the corresponding reconstruction.

The bottom two rows (g)--(l) further support the above observations by presenting 2D cross-sectional slices of the true and reconstructed fields. These visualizations confirm that the proposed method successfully recovers not only the spatial support but also the amplitude distribution of each component. The reconstructed field $E_1^{\rm comp}$ attains a maximum value of 0.9819, corresponding to a relative error of 1.8\%. For $E_2^{\rm comp}$, the peak value is 1.0377, yielding a relative error of 3.7\%. The maximum of $E_3^{\rm comp}$ is 1.1662, with a relative error of 16.62\%. These levels of discrepancy are acceptable given that the input data are contaminated with 10\% multiplicative noise. The reconstructed slices (j)--(l) exhibit smooth and well-centered intensity patterns that align closely with the true profiles (g)--(i), demonstrating both high spatial accuracy and effective contrast preservation. Collectively, these results highlight the robustness and precision of the reconstruction algorithm in recovering complex vector fields from noisy, indirect measurements.

\subsection*{Test 2}

We now proceed to Test 2 to further evaluate the performance of the proposed reconstruction method under a different configuration of the initial electric field. In this experiment, we consider a new set of geometries for $\mathbf{E}^{\rm true}_0$ designed to challenge the algorithm with increased structural complexity and spatial variation.
In this test, the true initial electric field $\mathbf{E}_0^{\rm true}(\mathbf{x}) = (E_1^{\rm true}, E_2^{\rm true}, E_3^{\rm true})$ consists of two spheres and two embedded letters in 3D space:
\[
E_1^{\rm true}(\mathbf{x}) =
\begin{cases}
1, & \text{if } (x + 0.55)^2 + y^2 + (z + 0.5)^2 < 0.3^2, \\
2, & \text{if } (x - 0.55)^2 + (y - 0.3)^2 + (z - 0.5)^2 < 0.3^2, \\
0, & \text{otherwise},
\end{cases}
\]
\[
E_2^{\rm true}(\mathbf{x}) = \chi_T(x, y, z),
\qquad
E_3^{\rm true}(\mathbf{x}) = \chi_Y(x, y, z),
\]
where $\chi_T$ and $\chi_Y$ are characteristic functions corresponding to the 3D extrusions of the 2D binary letter images 'T' and 'Y', respectively. The letter 'T' is extruded over the slab $z \in [-0.75, -0.3]$, and the letter 'Y' over $z \in [0.3, 0.9]$. 

\begin{figure}
    \centering
	\subfloat[]{\includegraphics[height = .2\textwidth,width = .3\textwidth]{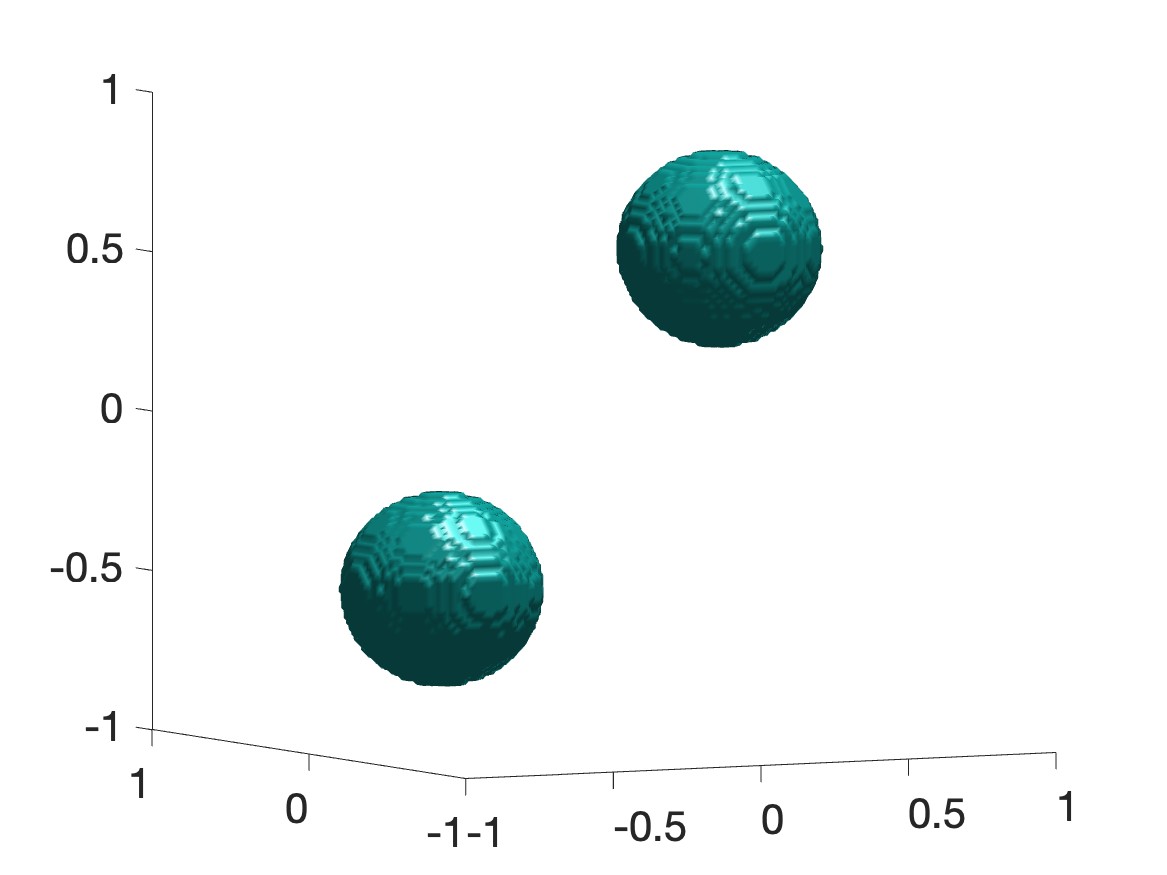}}
	\hfill
	\subfloat[]{\includegraphics[height = .2\textwidth,width = .3\textwidth]{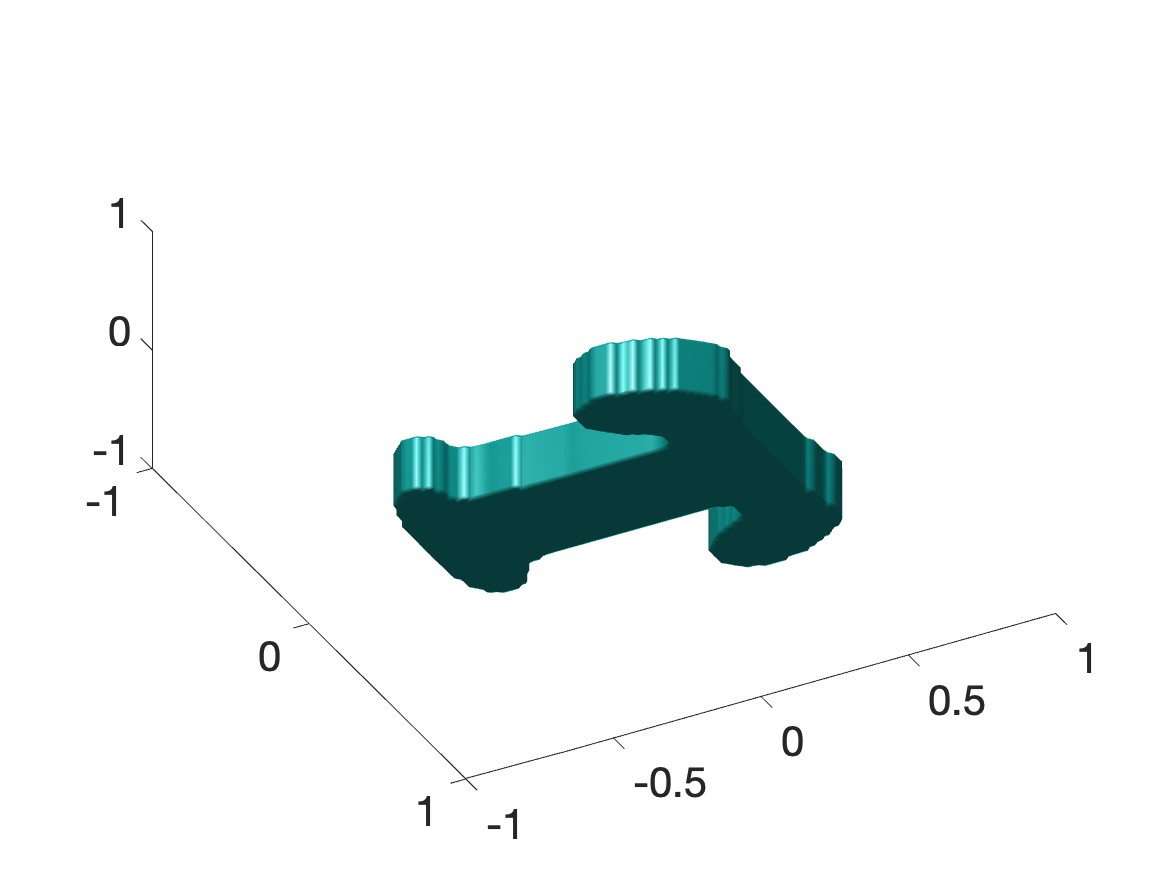}}
	\hfill
	\subfloat[]{\includegraphics[height = .2\textwidth,width = .3\textwidth]{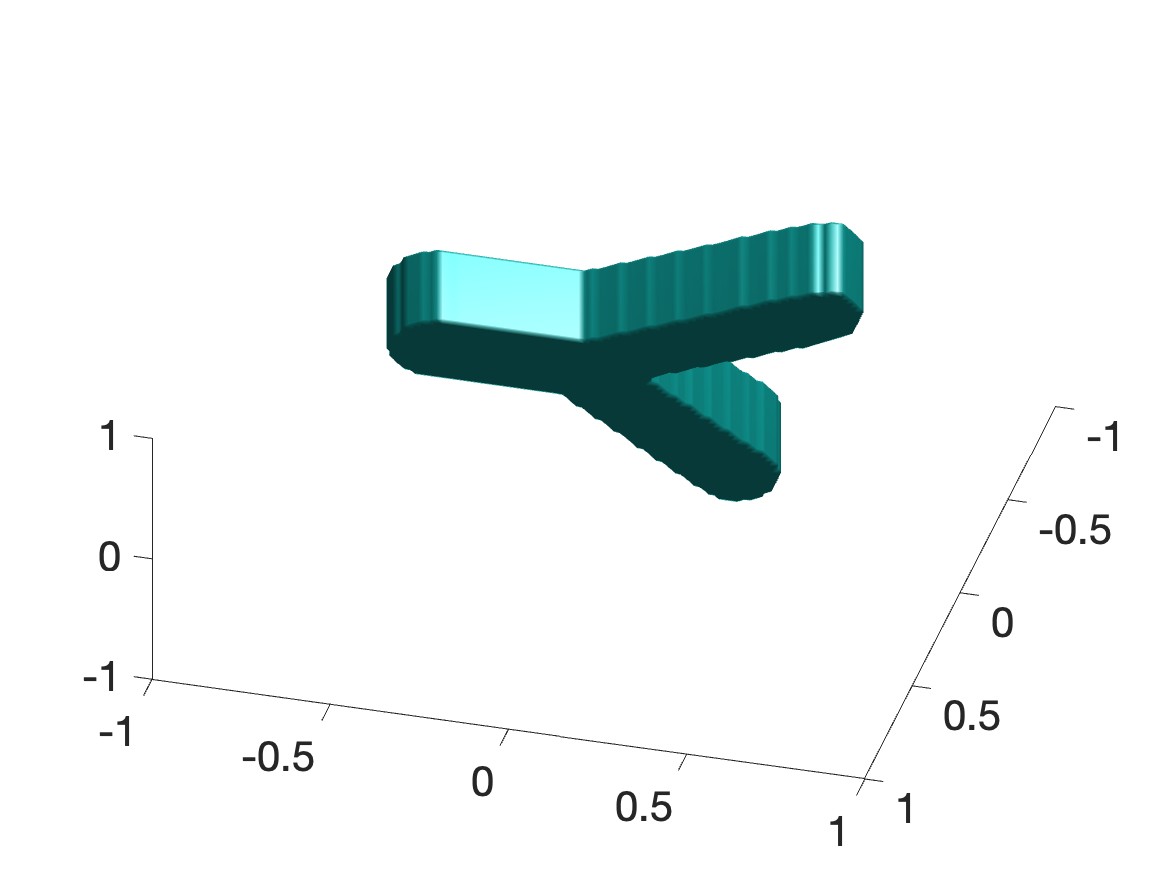}}
	
	\subfloat[]{\includegraphics[height = .2\textwidth,width = .3\textwidth]{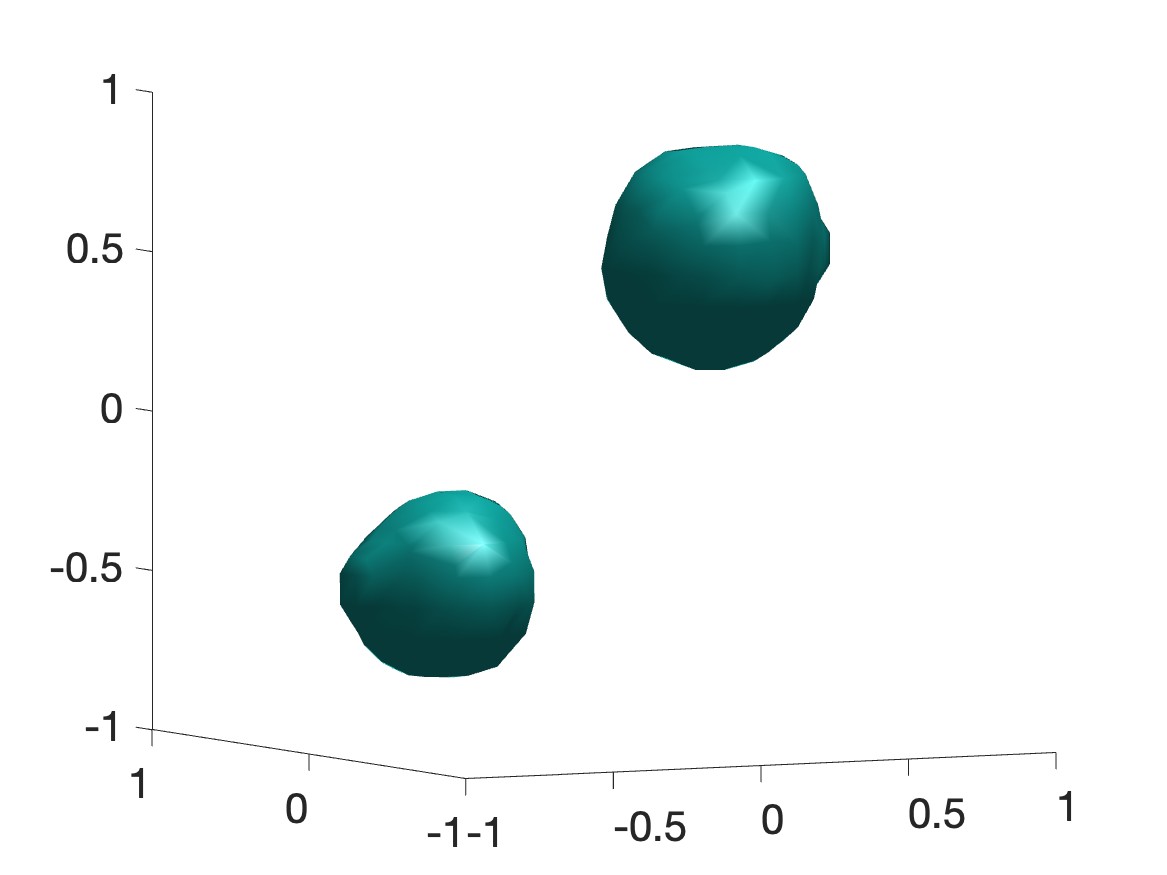}}
	\hfill
	\subfloat[]{\includegraphics[height = .2\textwidth,width = .3\textwidth]{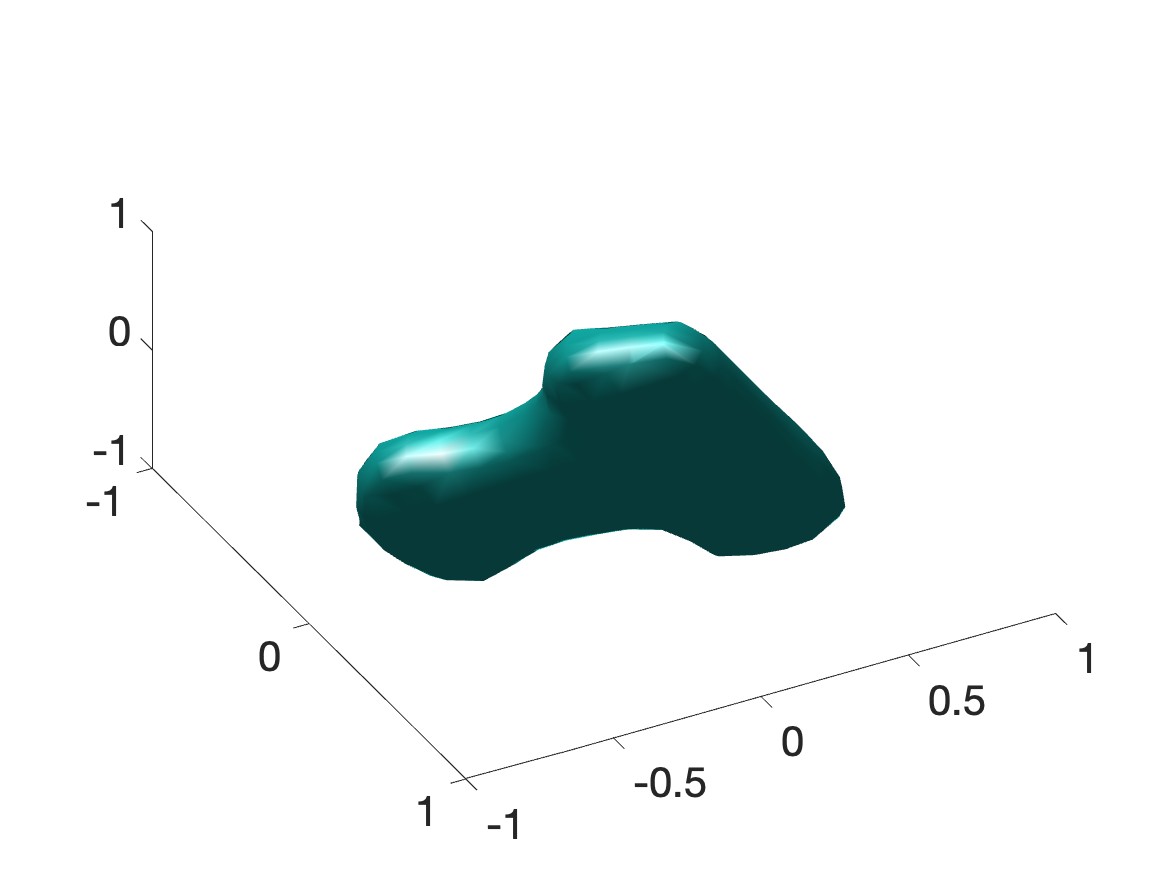}}
	\hfill
	\subfloat[]{\includegraphics[height = .2\textwidth,width = .3\textwidth]{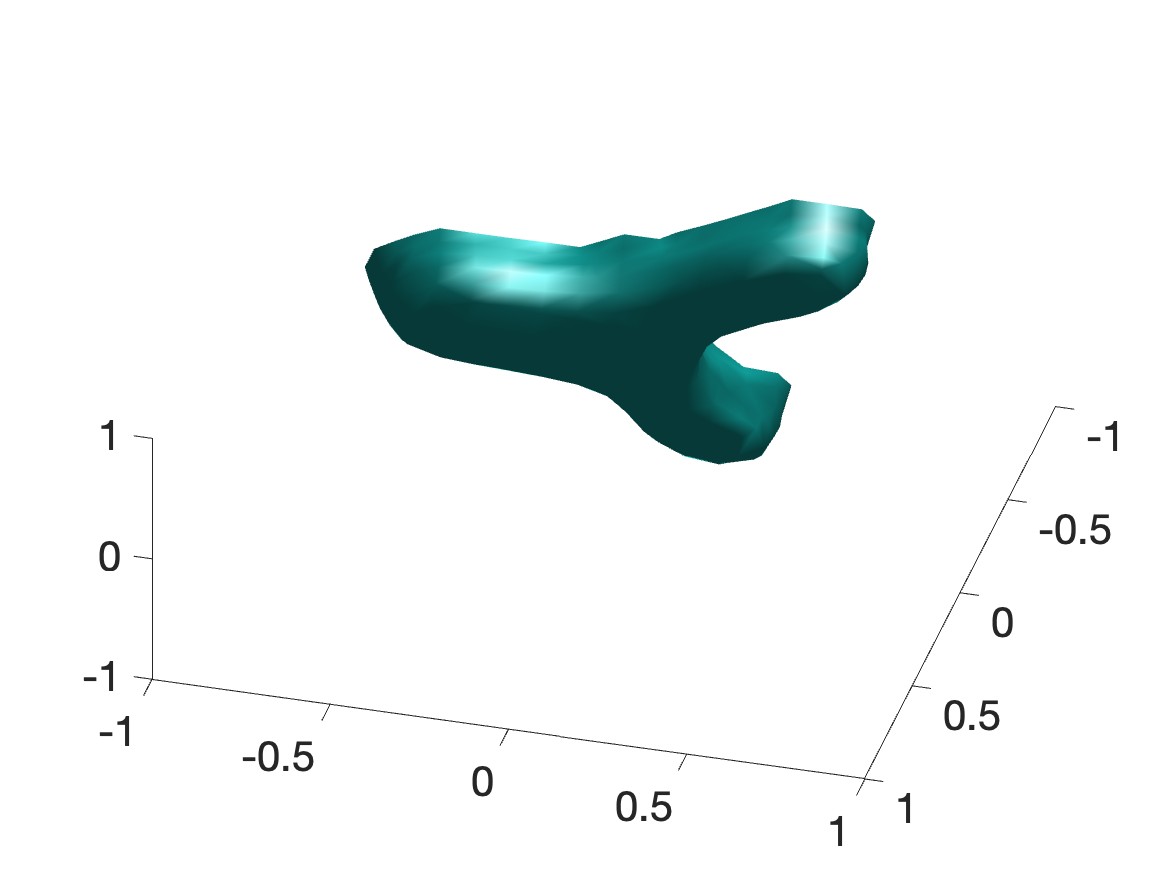}}
	
	\subfloat[]{\includegraphics[height = .2\textwidth,width = .3\textwidth]{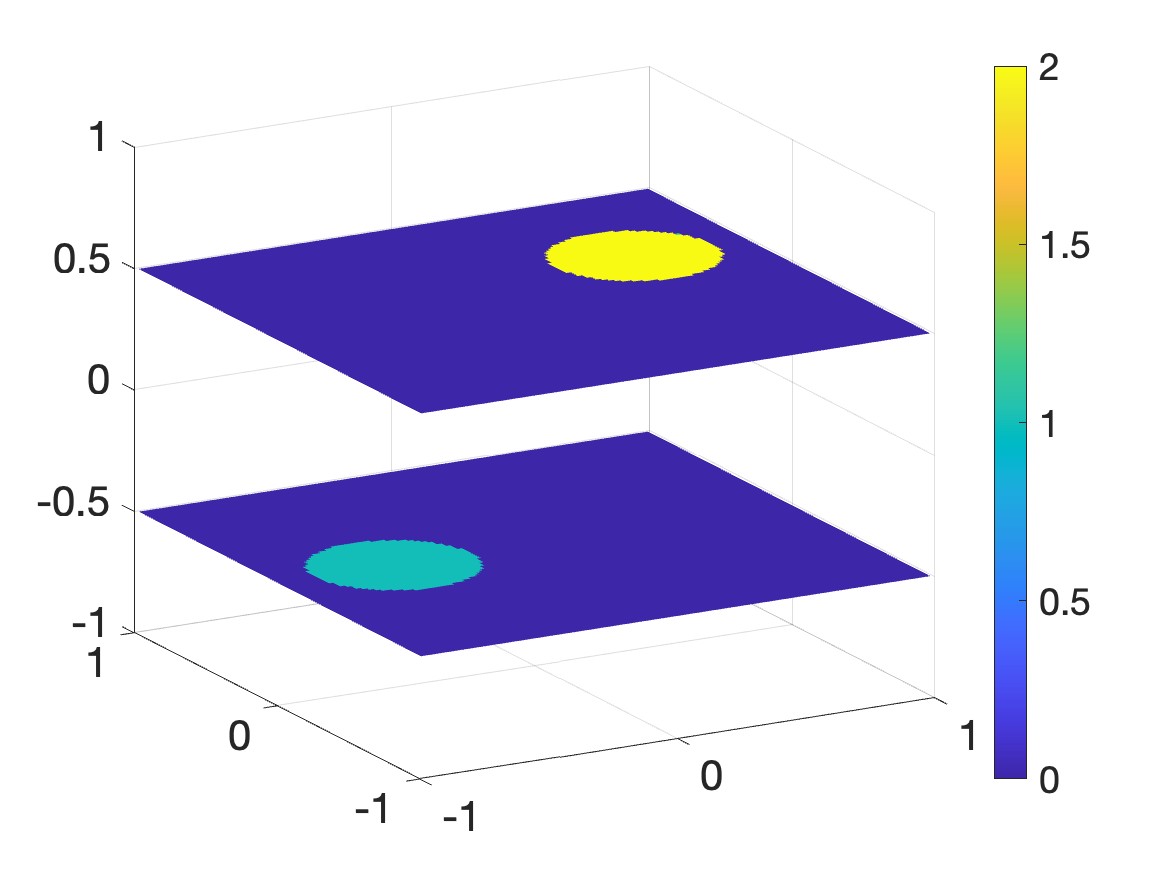}}
	\hfill
	\subfloat[]{\includegraphics[height = .2\textwidth,width = .3\textwidth]{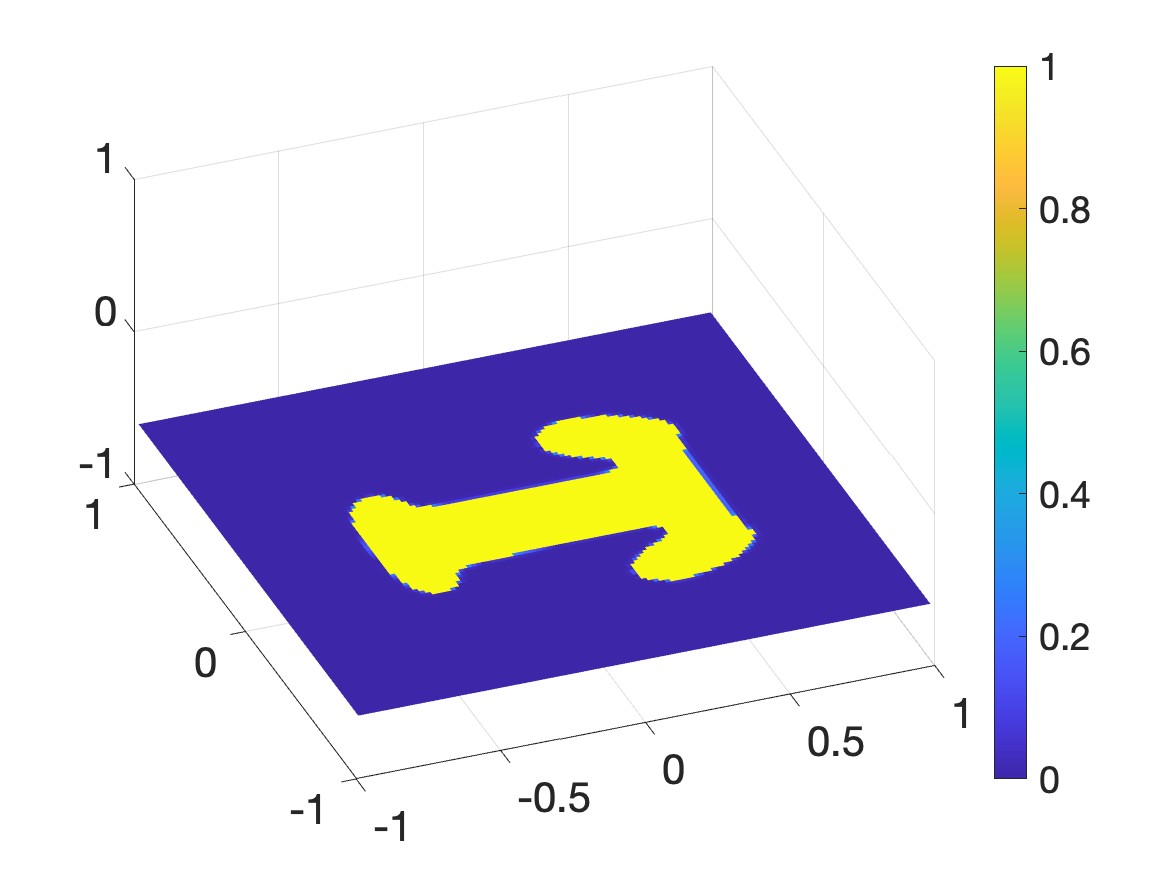}}
	\hfill
	\subfloat[]{\includegraphics[height = .2\textwidth,width = .3\textwidth]{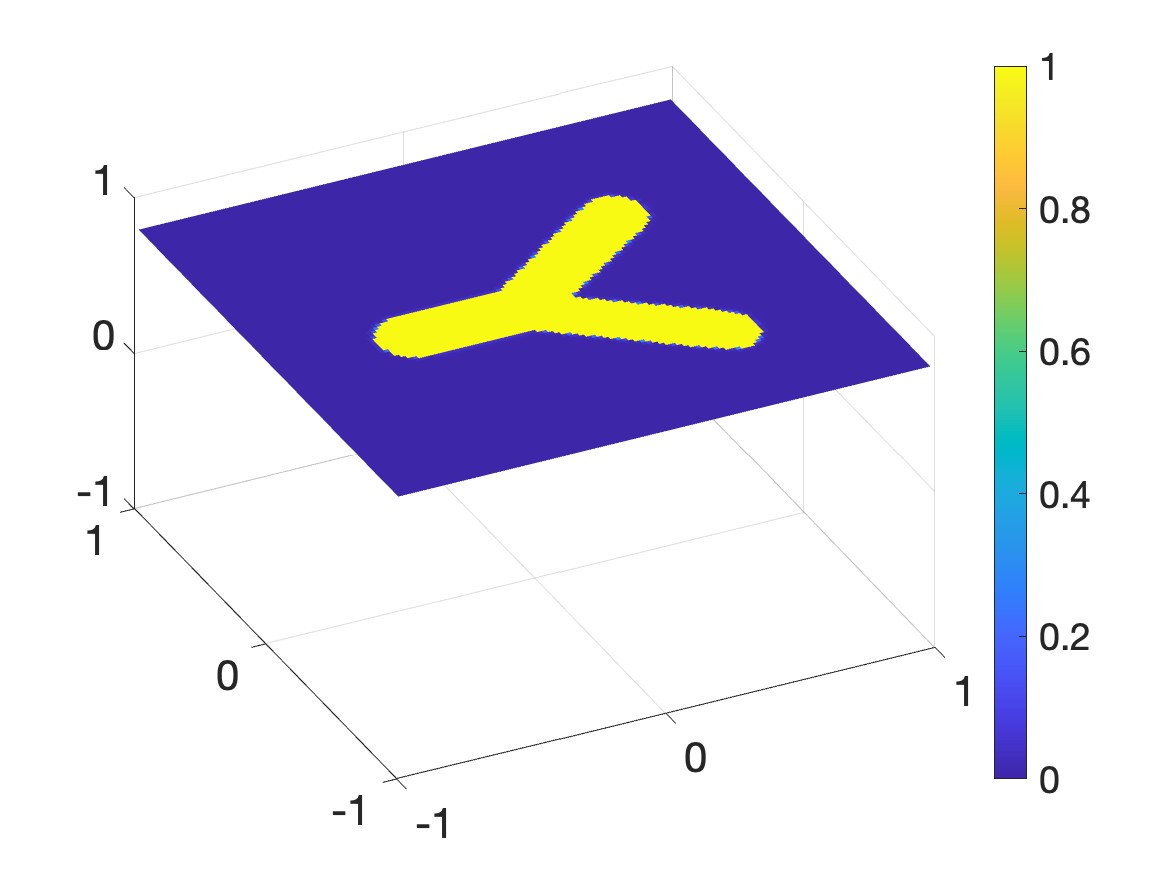}}
	
	\subfloat[]{\includegraphics[height = .2\textwidth,width = .3\textwidth]{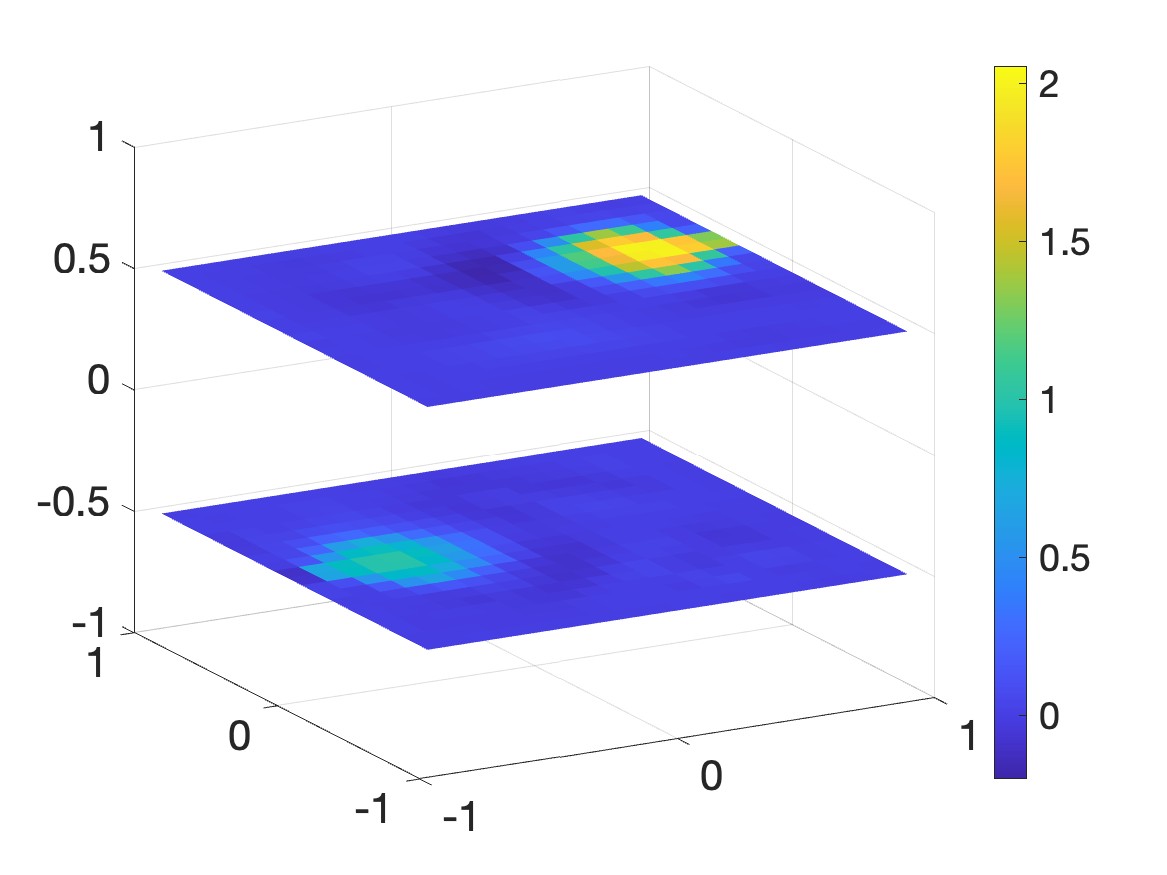}}
	\hfill
	\subfloat[]{\includegraphics[height = .2\textwidth,width = .3\textwidth]{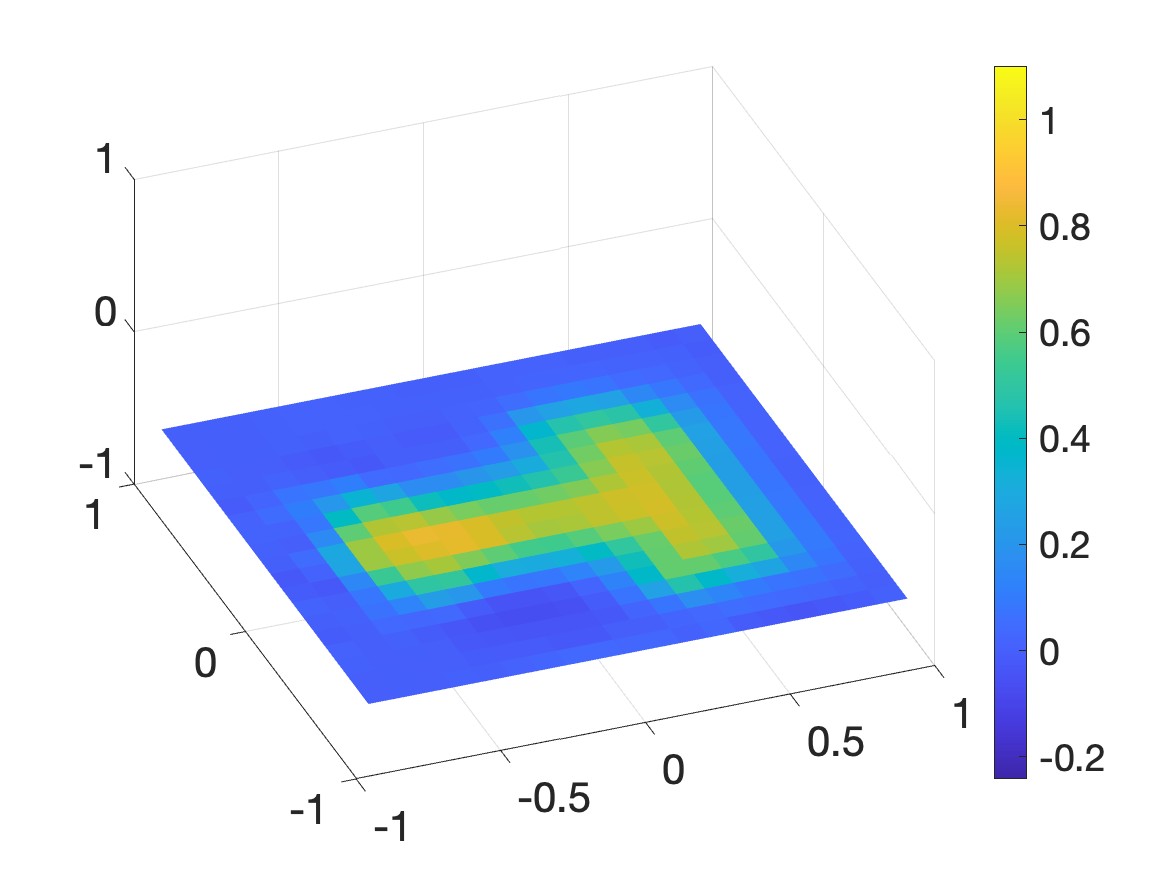}}
	\hfill
	\subfloat[]{\includegraphics[height = .2\textwidth,width = .3\textwidth]{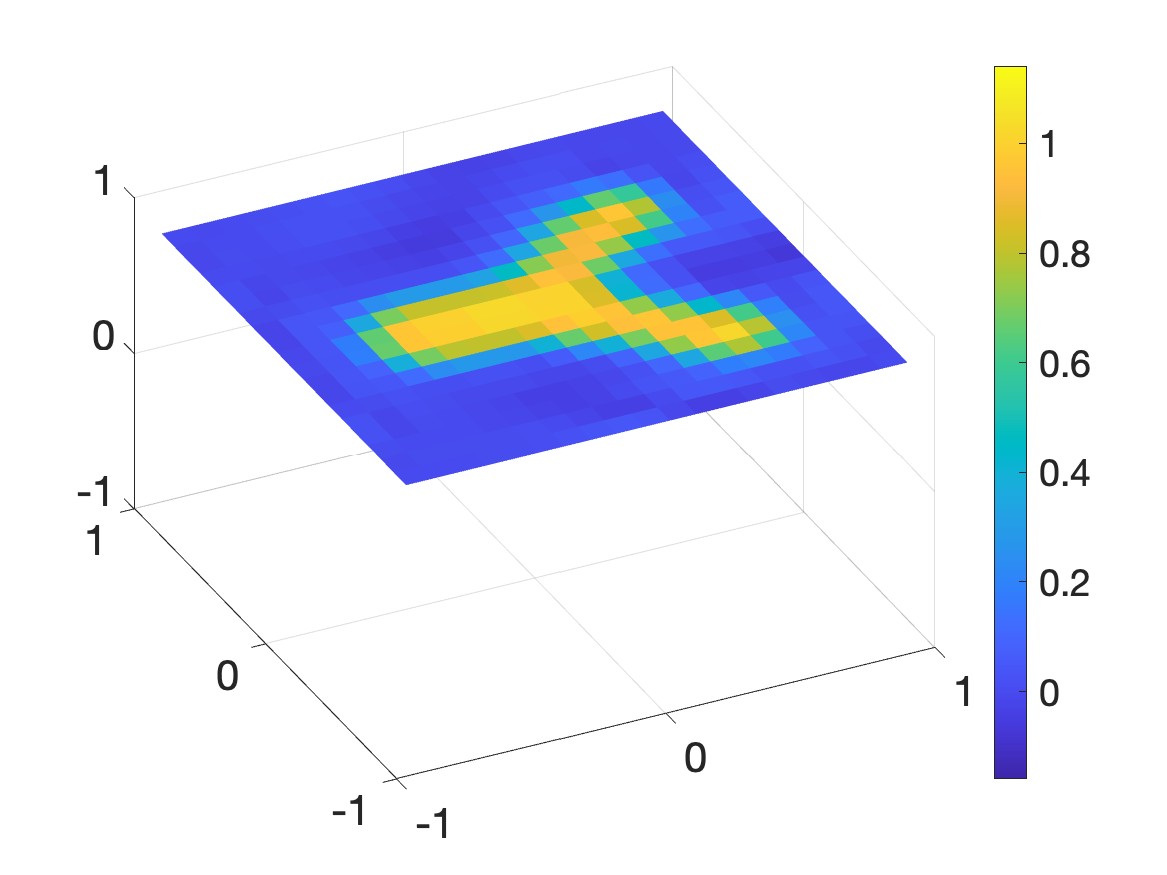}}
	
	\caption{
    Visualization of the true and reconstructed components of the initial electric field $\mathbf{E}^{\rm true}_0 = (E^{\rm true}_1, E^{\rm true}_2, E^{\rm true}_3)$. 
    Top row (a)--(c): isosurfaces of the true components. 
    Second row (d)--(f): isosurfaces of the reconstructed components $E^{\rm comp}_j$, $j = 1,2,3$. 
    Third row (g)--(i): representative 2D slices of the true components. 
    Bottom row (j)--(l): corresponding slices of the reconstructed components.  
    The figure demonstrates both geometric and intensity-level agreement between the true and reconstructed fields.
    }
    \label{fig_test2}
\end{figure}

Figure~\ref{fig_test2} illustrates the reconstruction results for Test 2 under 10\% multiplicative noise. The first component $E^{\rm true}_1$ contains two distinct spherical inclusions of different amplitudes, both of which are accurately captured in the reconstruction $E^{\rm comp}_1$ (subfigures a and d), with correct relative sizing and spatial separation. The second and third components embed the letters 'T' and 'Y' as volumetric shapes. The isosurfaces of $E^{\rm comp}_2$ and $E^{\rm comp}_3$ (subfigures e and f) show that the reconstruction effectively preserves the topology and orientation of the original characters (subfigures b and c), despite the presence of significant noise.

The 2D slices in subfigures (g)--(l) offer further insight into the reconstruction fidelity. The slices of $E^{\rm comp}_1$ (j) accurately reflect the location and amplitude of both spherical regions observed in the true field (g). For the symbolic components, $E^{\rm comp}_2$ (k) and $E^{\rm comp}_3$ (l) preserve the structure of the letters 'T' and 'Y' with only minor diffusion and smooth deformation, demonstrating resilience to noise and strong contrast recovery. Overall, the reconstructions exhibit excellent agreement with the ground truth in both geometry and intensity, confirming the robustness of the method under noisy conditions.

Quantitatively, the reconstructed upper sphere in $E_1^{\rm comp}$ reaches a maximum value of 2.0553, corresponding to a relative error of 2.76\%, while the lower sphere attains a maximum of 1.0395, yielding a relative error of 3.95\%. For the symbolic components, the peak value within the letter 'T' in $E_2^{\rm comp}$ is 1.1014, with a relative error of 10.14\%, and within the letter 'Y' in $E_3^{\rm comp}$ the maximum is 1.1411, resulting in a relative error of 14.11\%.

\subsection*{Test 3}

In this test, the true initial electric field $\mathbf{E}_0^{\rm true}(\mathbf{x}) = (E_1^{\rm true}, E_2^{\rm true}, E_3^{\rm true})$ is defined by the following component-wise expressions:
\[
E_1^{\rm true}(\mathbf{x}) =
\begin{cases}
2.5, & \text{if } \max\{5|x + 0.55|, |y|\} < 0.9 \text{ and } |z + 0.4| < 0.3, \\
3,   & \text{if } (x - 0.55)^2 + y^2 + (z - 0.4)^2 < 0.3^2, \\
0,   & \text{otherwise},
\end{cases}
\]
\[
E_2^{\rm true}(\mathbf{x}) =
\begin{cases}
2.5, & \text{if } \max\{5|x + 0.5|, |y|\} < 0.9 \text{ and } |z + 0.4| < 0.3, \\
3,   & \text{if } \max\{5|x - 0.5|, |z|\} < 0.9 \text{ and } |y - 0.5| < 0.3, \\
0,   & \text{otherwise},
\end{cases}
\]
\[
E_3^{\rm true}(\mathbf{x}) =
\begin{cases}
2, & \text{if } (x - 0.5)^2 + (y - 0.4)^2 + (z - 0.3)^2 < 0.3^2, \\
0, & \text{otherwise}.
\end{cases}
\]

\begin{figure}
    \centering
	\subfloat[]{\includegraphics[height = .2\textwidth,width = .3\textwidth]{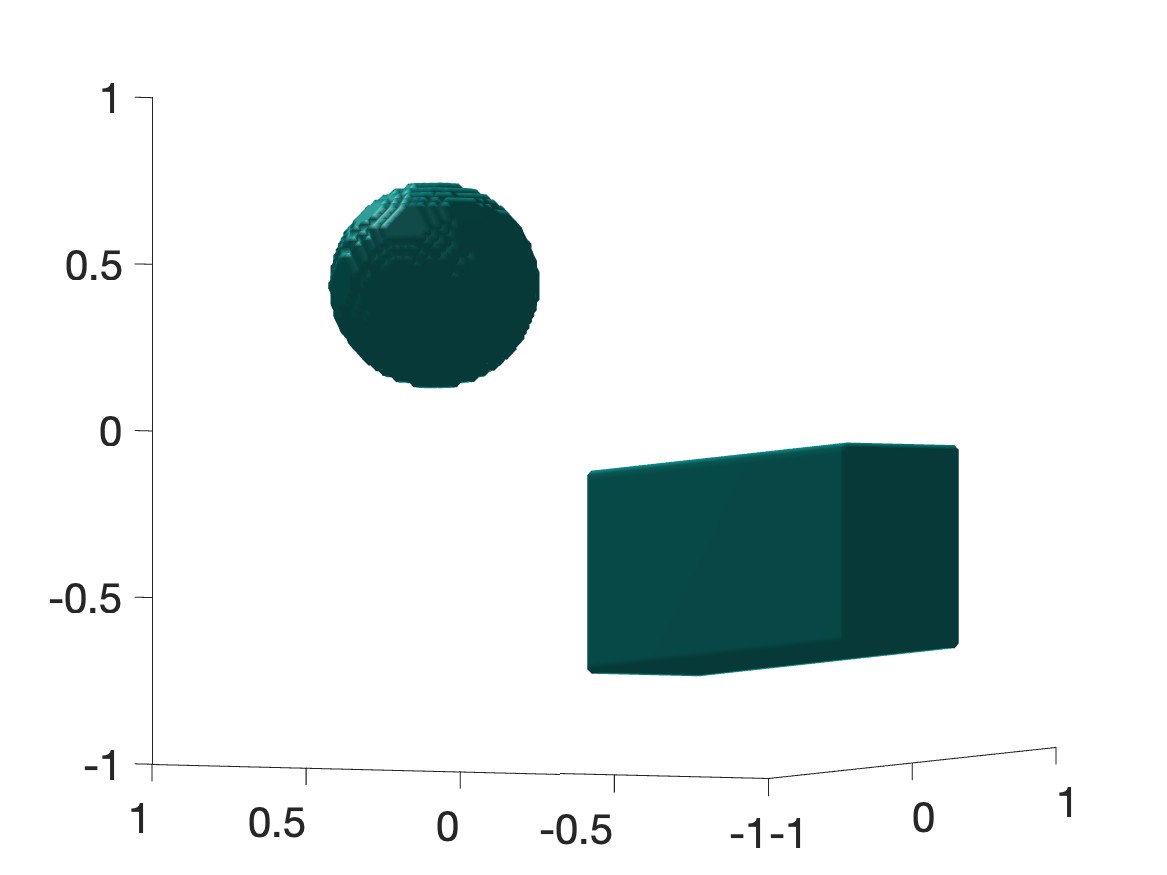}}
	\hfill
	\subfloat[]{\includegraphics[height = .2\textwidth,width = .3\textwidth]{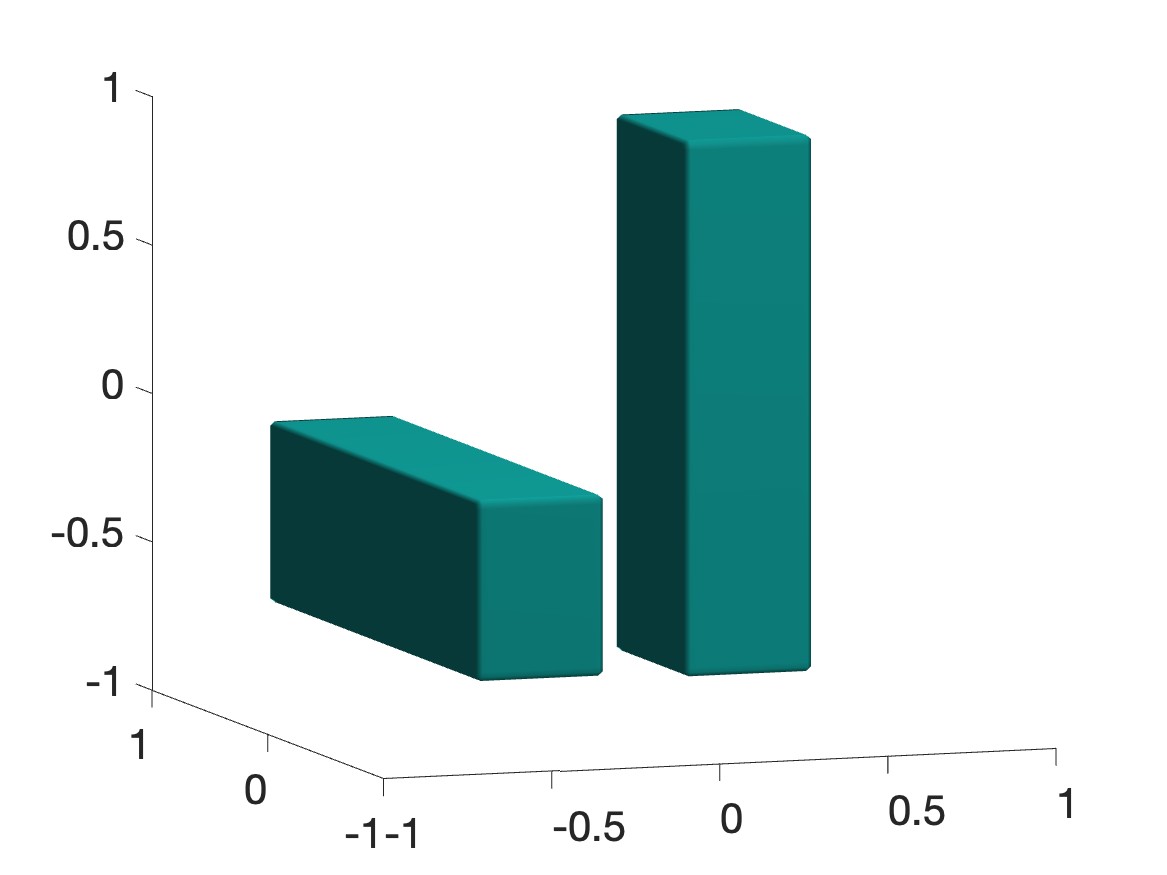}}
	\hfill
	\subfloat[]{\includegraphics[height = .2\textwidth,width = .3\textwidth]{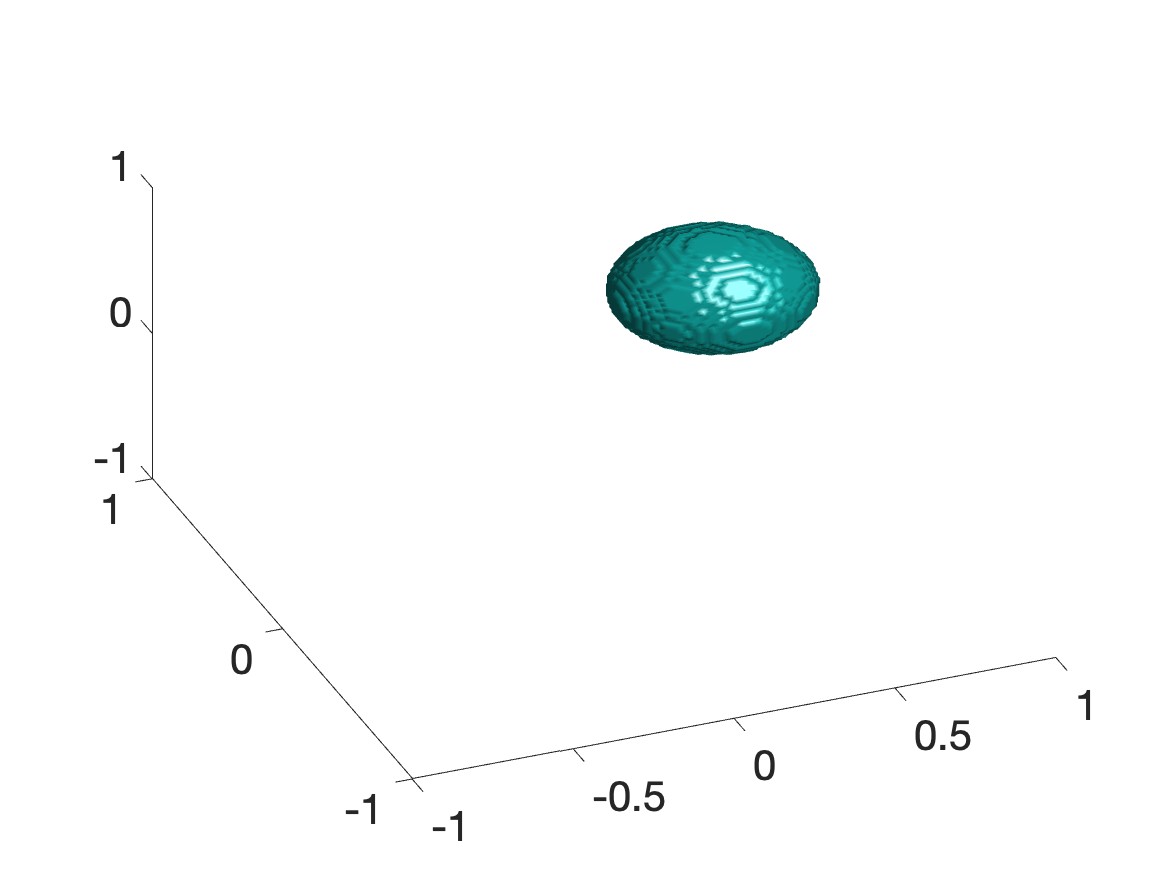}}
	
	\subfloat[]{\includegraphics[height = .2\textwidth,width = .3\textwidth]{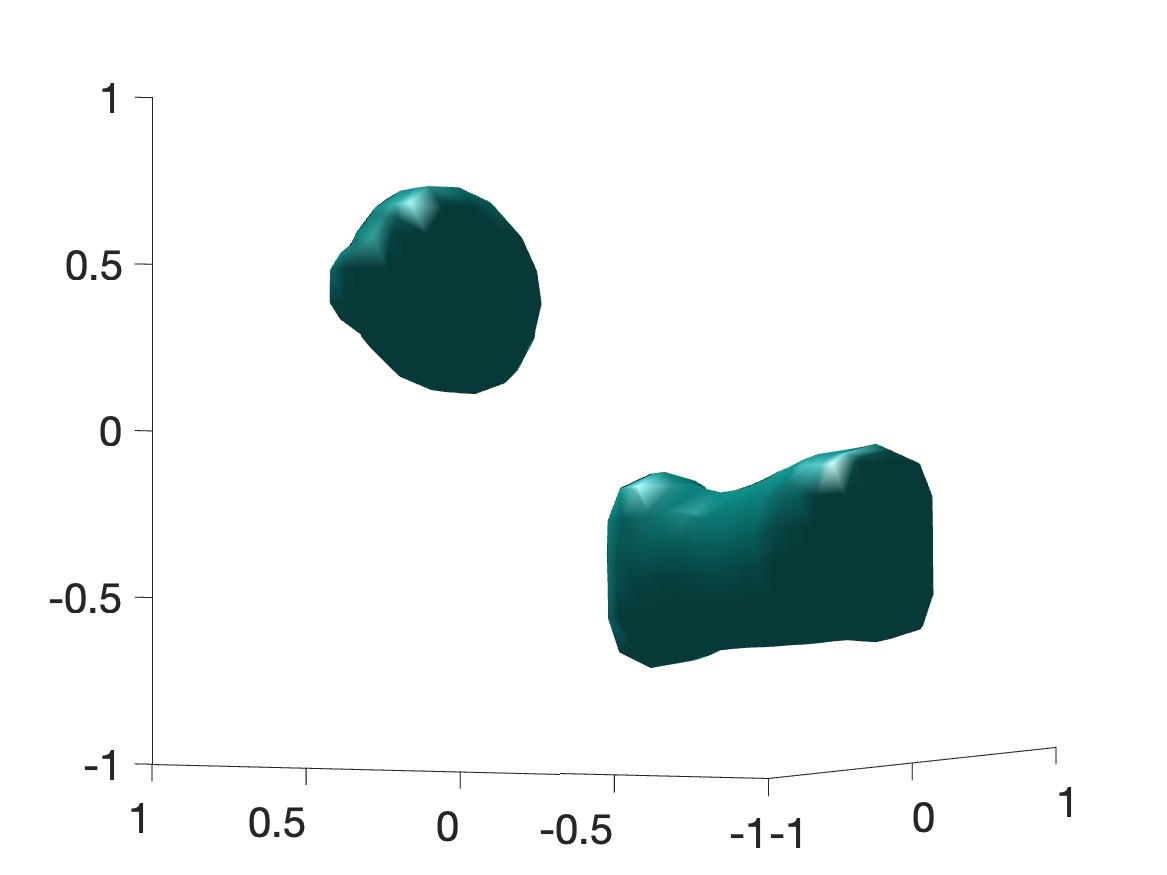}}
	\hfill
	\subfloat[]{\includegraphics[height = .2\textwidth,width = .3\textwidth]{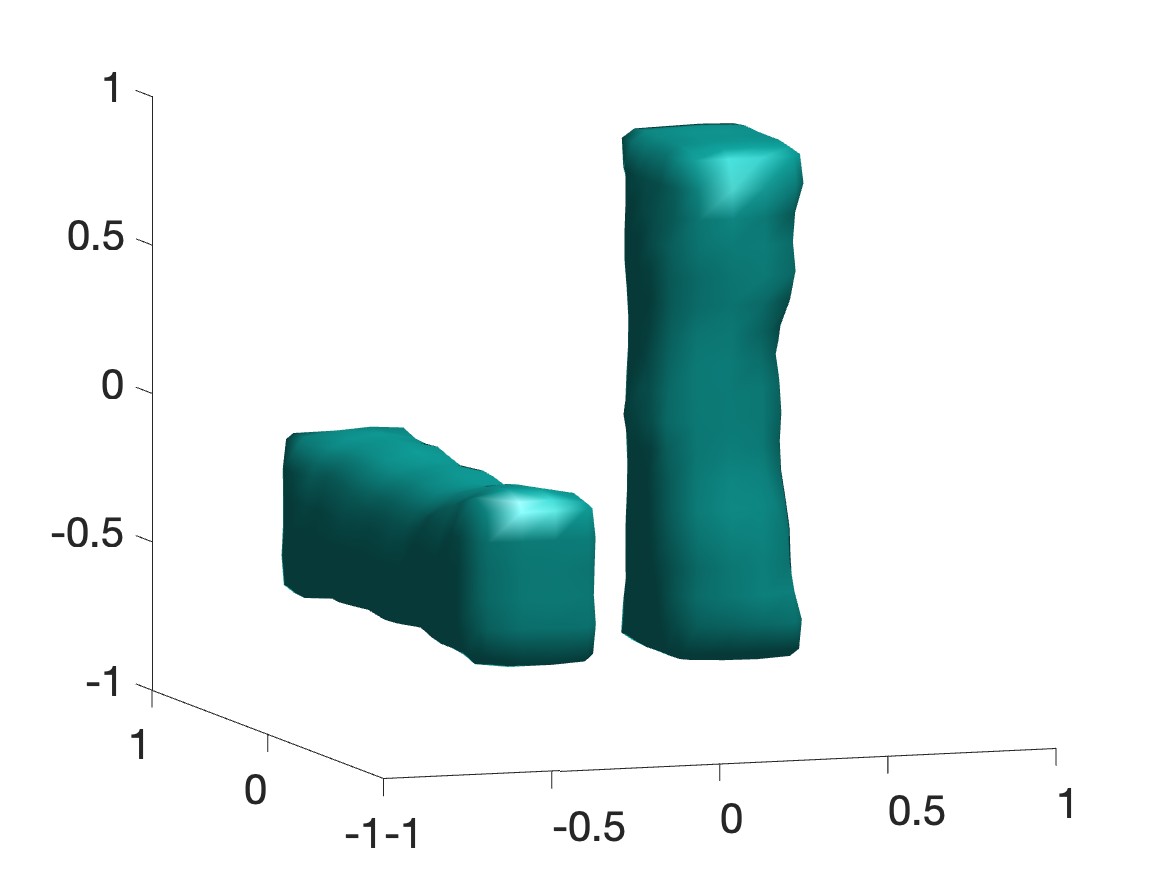}}
	\hfill
	\subfloat[]{\includegraphics[height = .2\textwidth,width = .3\textwidth]{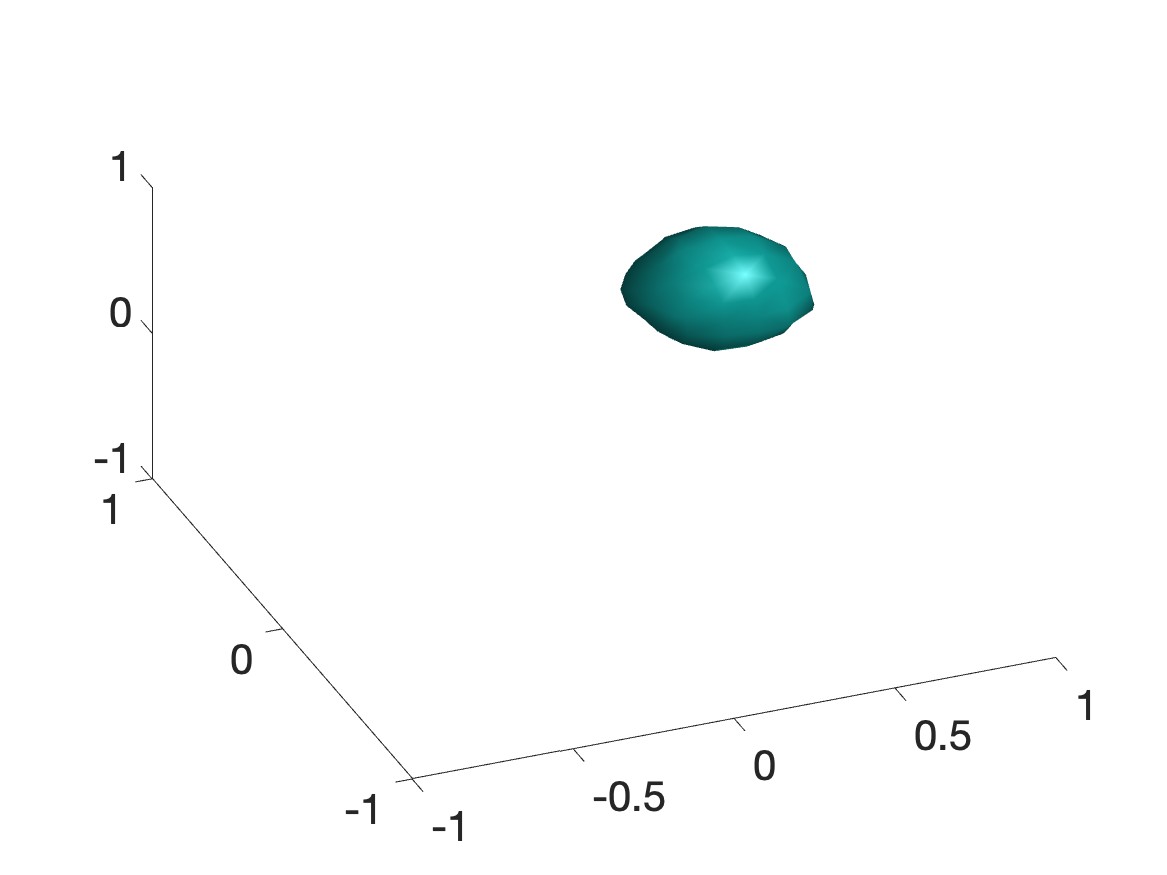}}
	
	\subfloat[]{\includegraphics[height = .2\textwidth,width = .3\textwidth]{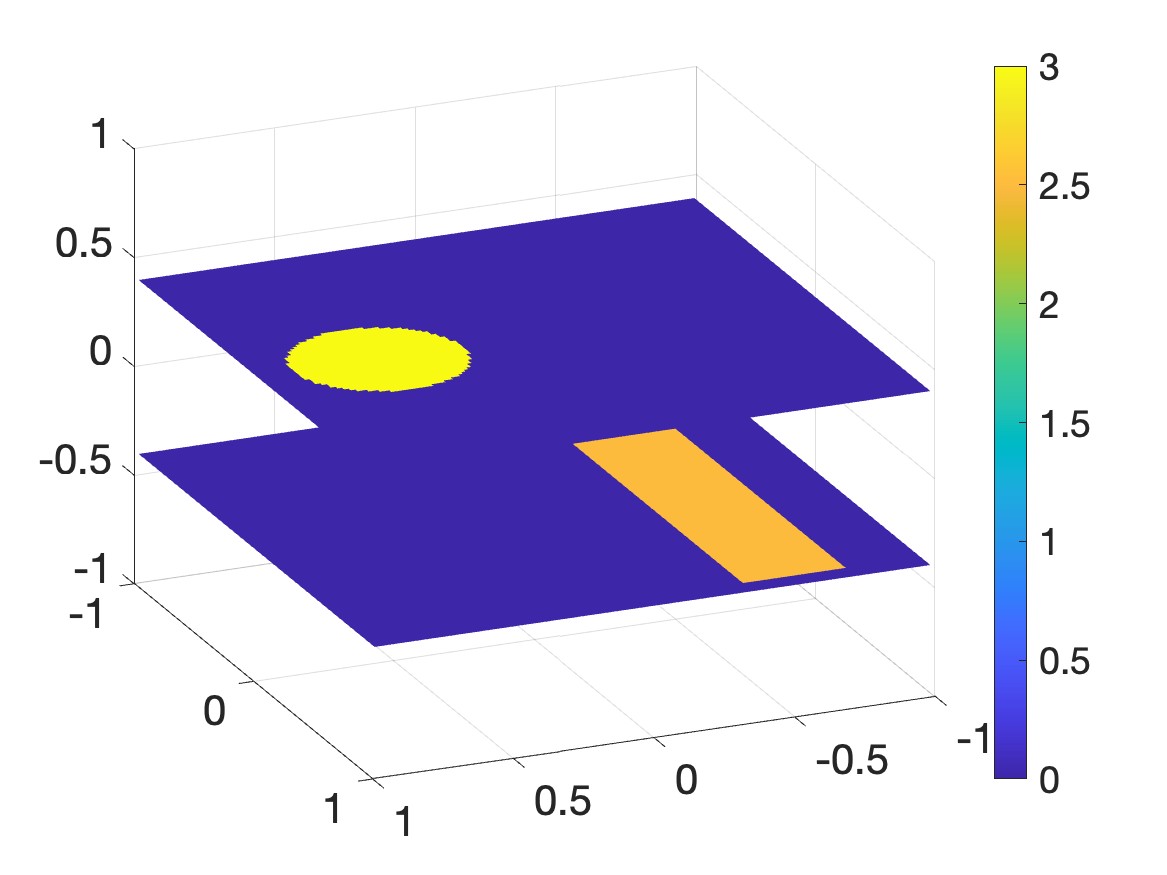}}
	\hfill
	\subfloat[]{\includegraphics[height = .2\textwidth,width = .3\textwidth]{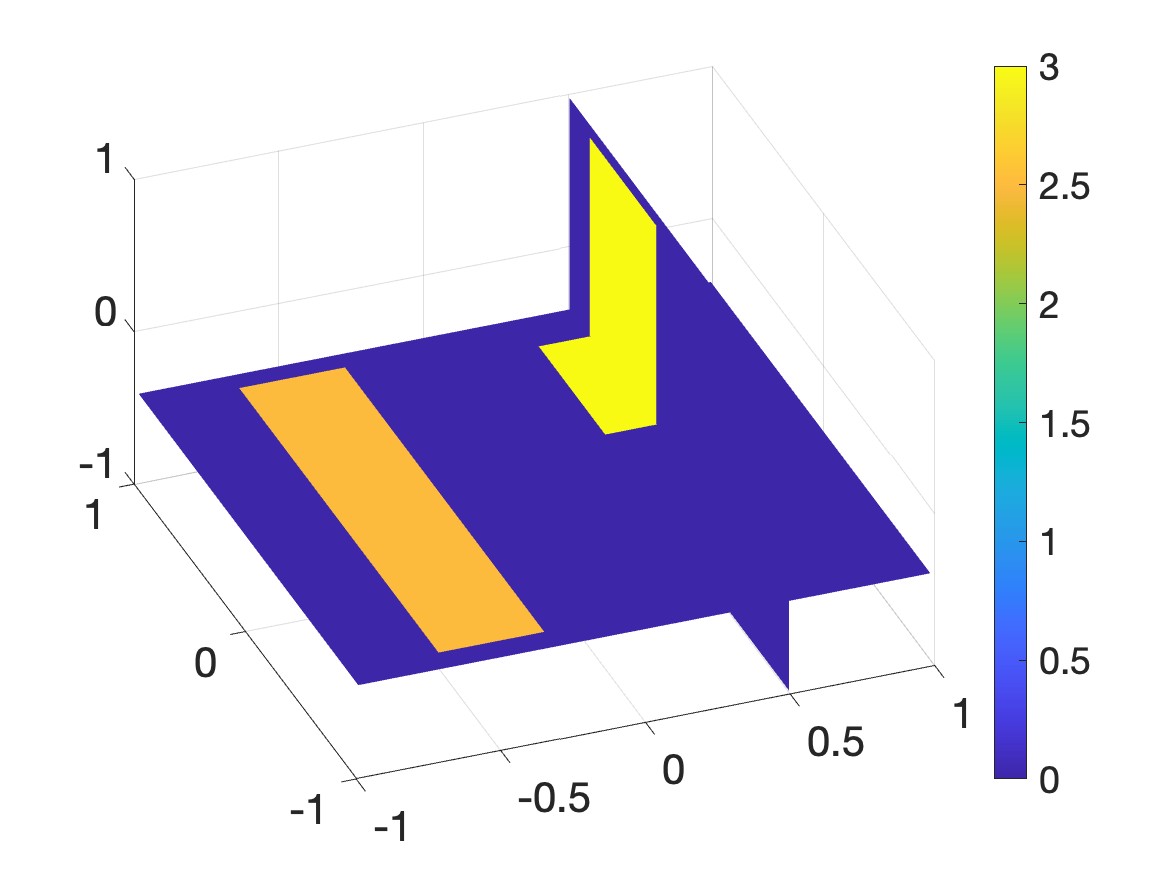}}
	\hfill
	\subfloat[]{\includegraphics[height = .2\textwidth,width = .3\textwidth]{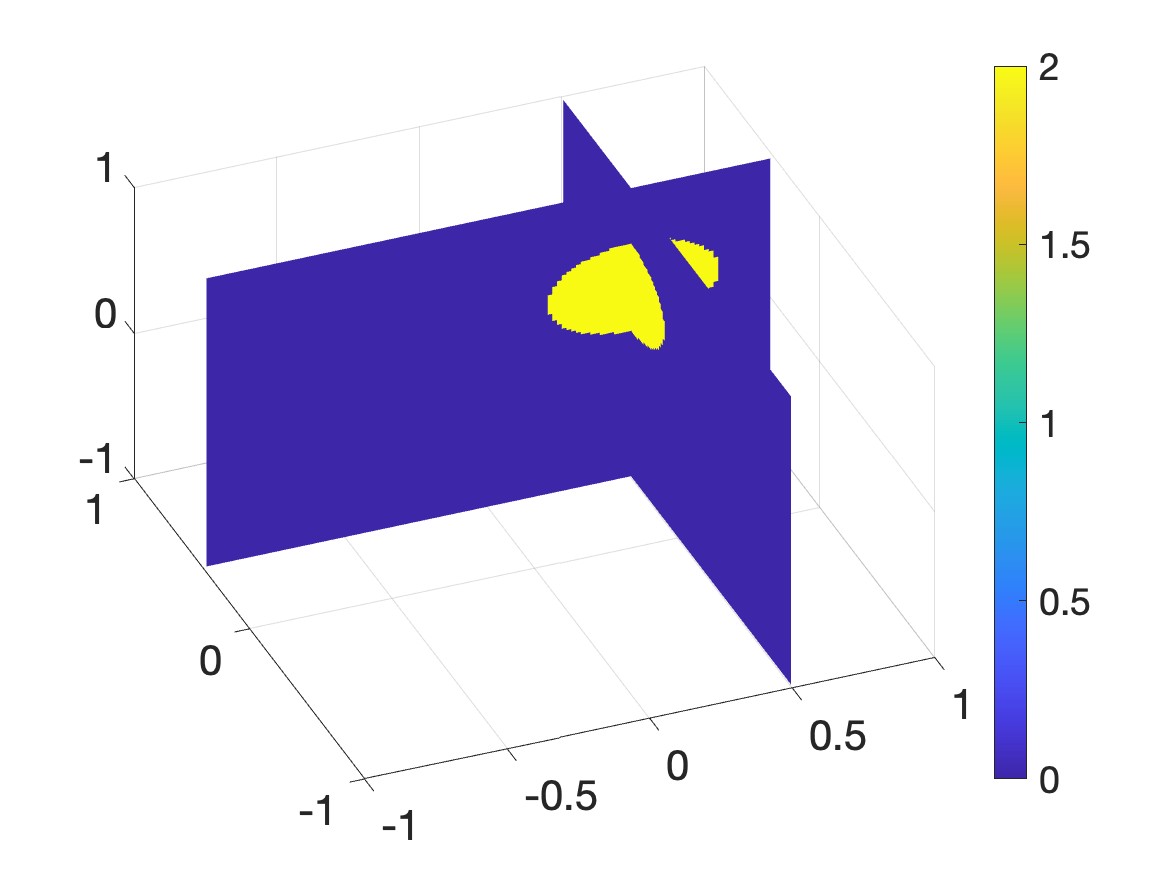}}
	
	\subfloat[]{\includegraphics[height = .2\textwidth,width = .3\textwidth]{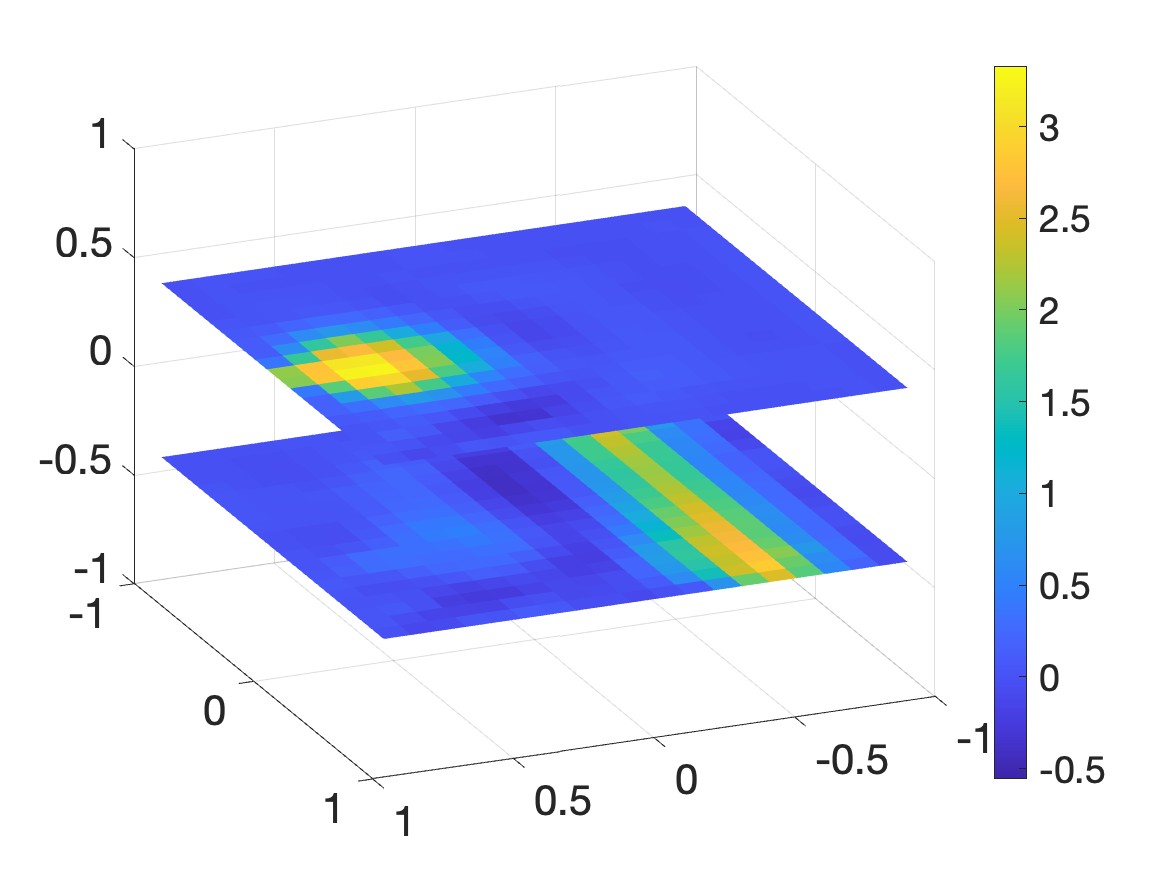}}
	\hfill
	\subfloat[]{\includegraphics[height = .2\textwidth,width = .3\textwidth]{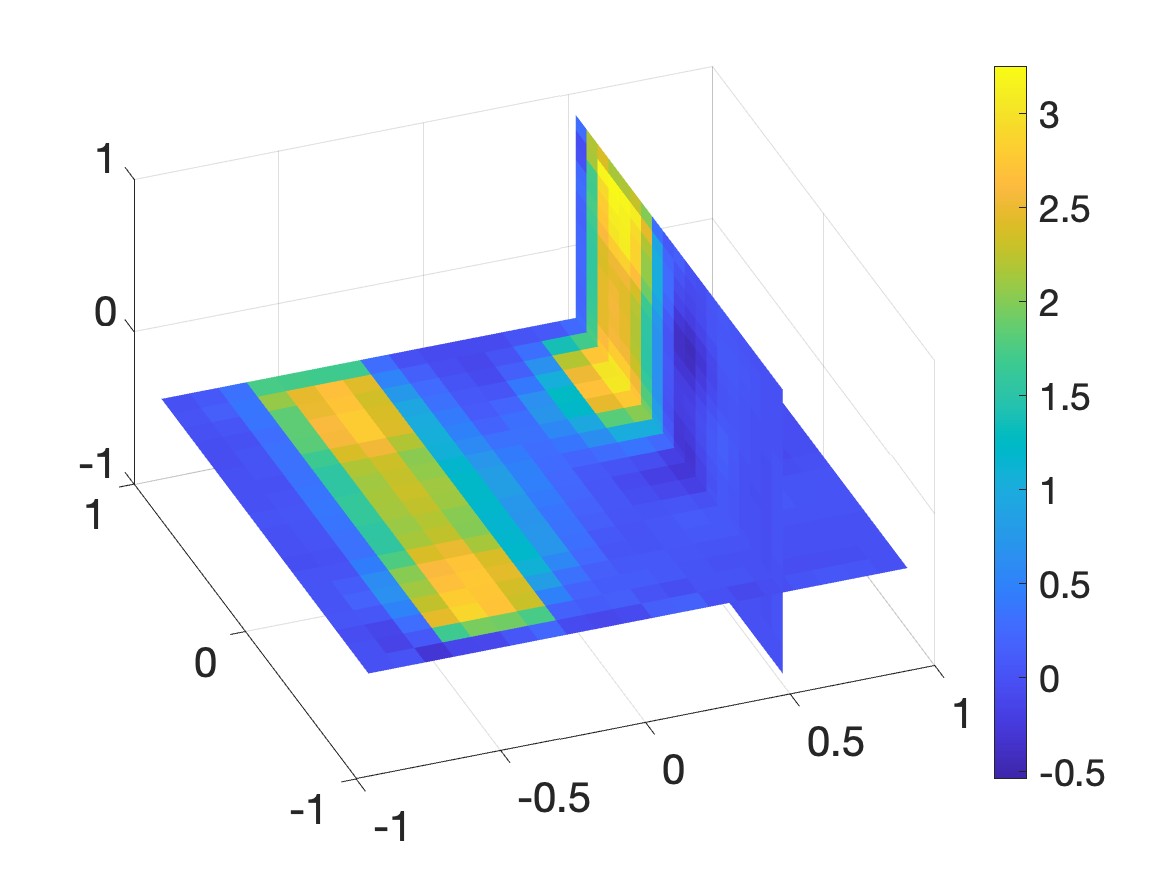}}
	\hfill
	\subfloat[]{\includegraphics[height = .2\textwidth,width = .3\textwidth]{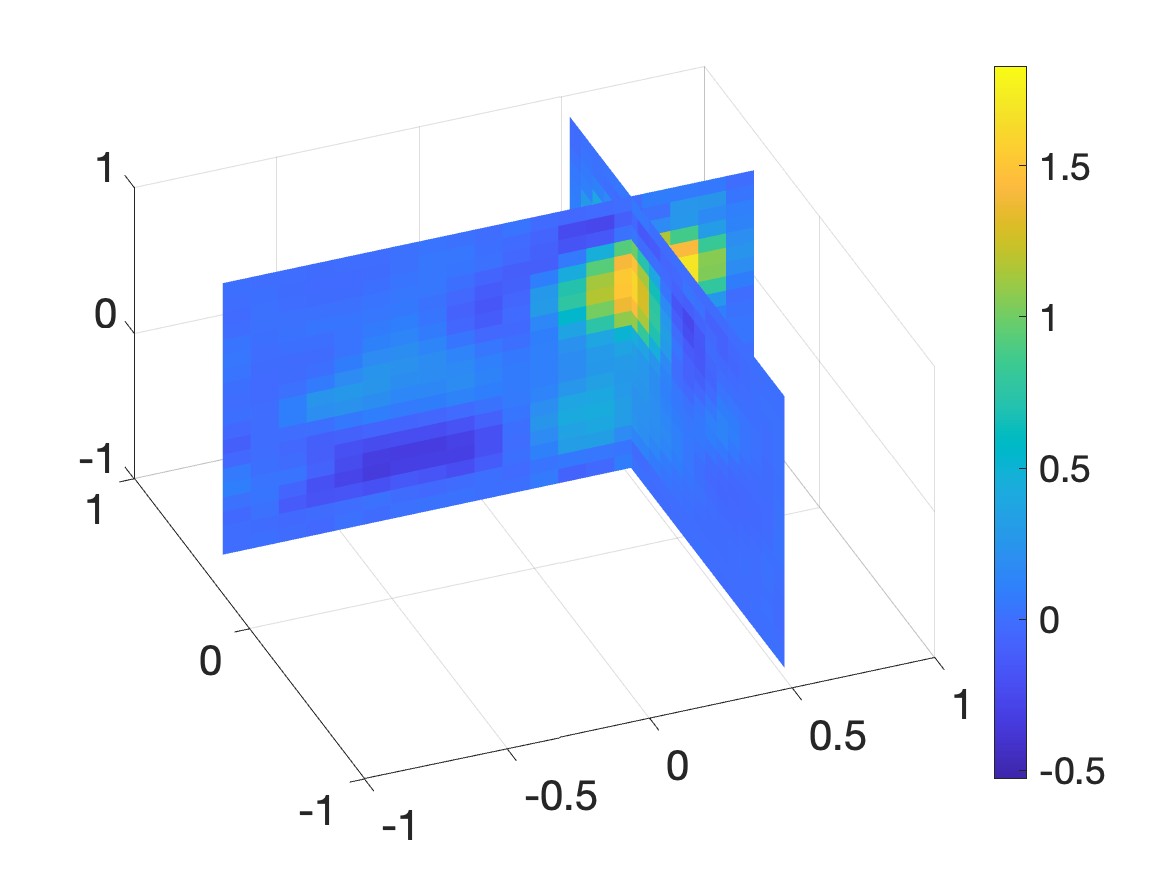}}
	
	\caption{
    Visualization of the true and reconstructed components of the initial electric field $\mathbf{E}^{\rm true}_0 = (E^{\rm true}_1, E^{\rm true}_2, E^{\rm true}_3)$. 
    Top row (a)--(c): isosurfaces of the true components. 
    Second row (d)--(f): isosurfaces of the reconstructed components $E^{\rm comp}_j$, $j = 1,2,3$. 
    Third row (g)--(i): representative 2D slices of the true components. 
    Bottom row (j)--(l): corresponding slices of the reconstructed components.  
    The figure demonstrates both geometric and intensity-level agreement between the true and reconstructed fields.
    }
    \label{fig_test3}
\end{figure}

Figure~\ref{fig_test3} presents the reconstruction results for Test 3, which features a mix of anisotropic slab-like structures and spherical inclusions with varying amplitudes. The top two rows show the isosurface plots of the true (a)--(c) and reconstructed (d)--(f) components. In $E_1^{\rm true}$, the algorithm successfully recovers both the spherical inclusion on the right and the elongated rectangular slab on the left. The reconstruction $E_1^{\rm comp}$ (subfigure d) maintains the separation and relative amplitudes of these two regions, with smooth transitions and accurate geometric profiles.

In $E_2^{\rm true}$ (subfigure b), the geometry includes two orthogonally oriented slab-like inclusions. The reconstruction $E_2^{\rm comp}$ (e) captures both features with reasonable sharpness and intensity contrast, though minor deformation is observed near the corners due to the anisotropic structure. The spherical region in $E_3^{\rm true}$ (c) is also accurately reconstructed in $E_3^{\rm comp}$ (f), with the location and shape well preserved.

The bottom two rows (g)--(l) show 2D cross-sectional slices of the true and reconstructed components. The slices of $E_1^{\rm comp}$ (j) and $E_2^{\rm comp}$ (k) demonstrate that the spatial support and amplitudes of the rectangular and spherical regions are well maintained, although slight blurring occurs at sharp edges. In $E_3^{\rm comp}$ (l), the intensity distribution closely matches that of the true slice (i), confirming accurate localization and amplitude reconstruction.

Quantitatively, the reconstructed field $\mathbf{E}^{\rm comp}$ yields the following maximum values in their respective target regions. Inside the disk centered at $(0.55, 0, 0.4)$ with radius 0.3, the maximum of $E_1^{\rm comp}$ is $3.33$, corresponding to a relative error of $11\%$. Within the slab region, the maximum of $E_1^{\rm comp}$ is $2.883$ (15.32\% error), while the maximum of $E_2^{\rm comp}$ in the vertical slab is $2.918$ (16.67\% error), and in the horizontal slab is also $2.918$ (8.43\% error). For $E_3^{\rm comp}$, the maximum value inside the disk is $1.8298$, resulting in a relative error of $8.51\%$. Given the ill-posed nature of the inverse problem and the presence of $10\%$ noise in the data, these reconstruction errors are within a reasonable and acceptable range.

Overall, despite the presence of 10\% multiplicative noise, the proposed method demonstrates strong robustness and fidelity in reconstructing complex spatial structures involving both isotropic and anisotropic features.

\section{Concluding Remarks}\label{sec7}

In this work, we have developed and analyzed a time dimension reduction framework for recovering the initial electric field $\mathbf{E}_0$ in time-domain Maxwell's equations. The proposed method leverages a projection onto a Legendre polynomial-exponential basis in time, effectively transforming the original $(3+1)$-dimensional inverse problem into a sequence of purely spatial problems in three dimensions. This transformation provides significant computational and analytical advantages, particularly by allowing us to relax the requirement of knowing the initial velocity $\partial_t \mathbf{E}(\mathbf{x}, 0)$, which is commonly unavailable in practical settings.

To address the under-determined nature of the problem, we introduced a minimum norm formulation that selects among all admissible solutions the one with smallest joint $L^2((0, T); H^2(\Omega)^3)$ and $H^2((0, T); L^2(\Omega)^3)$ norm. This approach is inspired by the Moore-Penrose-type regularization framework, ensuring both stability and physical plausibility of the reconstruction. We proved a convergence theorem guaranteeing the accuracy of the proposed method under noise-contaminated boundary data.

Our numerical experiments in three dimensions validate the theoretical results and demonstrate the method's robustness and efficiency. In particular, the reconstructions of $\mathbf{E}_0$ achieved reasonable accuracy even in the presence of 10\% noise, with relative errors ranging between 8\% and 16\% across several geometric regions of interest. These results affirm the potential of the time reduction approach as a reliable and computationally feasible tool for solving ill-posed inverse problems in time-dependent Maxwell systems.

In addition to the primary contributions, the proposed method offers several advantages that enhance its practical applicability. First, the time-dimensional reduction approach simplifies the original $(3+1)$-dimensional Maxwell system to a sequence of 3D problems, effectively eliminating the need to specify the initial velocity $\partial_t \mathbf{E}(\mathbf{x}, 0)$. This stands in contrast to traditional methods, which typically require $\partial_t \mathbf{E}(\mathbf{x}, 0)$ as part of the initial data---information that is often inaccessible in real-world applications. Our method bypasses this requirement, thereby broadening its usability in practical scenarios.
 Second, the use of a minimum-norm framework addresses the non-uniqueness that naturally arises in our under-determined problem setting, offering a well-posed strategy to select a physically meaningful solution. Third, the convergence theorem established in this work rigorously confirms the stability of the proposed reconstruction algorithm, even under noisy measurements. Finally, the effectiveness of our approach is demonstrated by a 3D numerical example, where high-fidelity reconstruction is achieved with acceptable error levels despite the ill-posed nature of the inverse problem and the presence of 10\% noise. Together, these contributions highlight the robustness and versatility of our method for solving time-domain inverse problems in electromagnetics.


 \section*{Acknowledgement}
 The work of Dang Duc Trong was supported by Vietnam National University (VNU-HCM) under grant number
 T2024-18-01.
The work of Loc Nguyen was supported by the National Science Foundation grant DMS-2208159.


\end{document}